\newcolumntype{L}[1]{>{\raggedright\let\newline\\\arraybackslash\hspace{0pt}}p{#1}}
\newcolumntype{C}[1]{>{\centering\let\newline\\\arraybackslash\hspace{0pt}}p{#1}}
\newcolumntype{R}[1]{>{\raggedleft\let\newline\\\arraybackslash\hspace{0pt}}p{#1}}
\newcommand{\cA}{\mathcal{A}}
\newcommand{\cB}{\mathcal{B}}
\newcommand{\bG}{\mathbf{G}}
\DeclareMathOperator{\Id}{Id}
\DeclareMathOperator{\SL}{SL}
\DeclareMathOperator{\Aut}{Aut}
\DeclareMathOperator{\gldim}{gldim}
\DeclareMathOperator{\GKdim}{GKdim}
\newcommand{\kk}{\Bbbk}
\newcommand{\inv}{^{-1}}
\renewcommand{\i}{\mathbbm{i}}
\newcommand{\kashina}[1]{Kashina \#{#1}}
\newcommand{\twokashinas}[2]{Kashinas \#{#1} and \#{#2}}
\newcommand{\threekashinas}[3]{Kashinas \#{#1}, \#{#2}, and \#{#3}}
\newcommand\grp[1]{\left\langle #1 \right\rangle}
\newtheorem{theorem}{Theorem}[section]
\newtheorem{lemma}[theorem]{Lemma}
\theoremstyle{definition}
\newtheorem{definition}[theorem]{Definition}
\newtheorem{notation}[theorem]{Notation}
\newtheorem{Conjecture}[theorem]{Conjecture}
\theoremstyle{remark}
\newtheorem{remark}[theorem]{Remark}
\numberwithin{equation}{section}
\title[Semisimple reflection Hopf algebras of dimension sixteen]{Semisimple reflection Hopf algebras\\ of dimension sixteen}
\author{Luigi Ferraro}
\address{Texas Tech University, Department of Mathematics and Statistics, Lubbock, Texas 79409}
\email{lferraro@ttu.edu}
\author{Ellen Kirkman}
\address{Wake Forest University, Department of Mathematics and Statistics, P. O. Box 7388, Winston-Salem, North Carolina 27109} 
\email{kirkman@wfu.edu}
\author{W. Frank Moore}
\address{Wake Forest University, Department of Mathematics and Statistics, P. O. Box 7388, Winston-Salem, North Carolina 27109}
\email{moorewf@wfu.edu}
\author{Robert Won}
\address{University of Washington, Department of Mathematics, Box 354350, Seattle, Washington 98195}
\email{robwon@uw.edu, robwon@gmail.com}
\subjclass[2010]{Primary 16T05, 16E65,16G10}
\begin{document}

\maketitle
\begin{abstract}
For each nontrivial semisimple Hopf algebra $H$ of dimension sixteen over $\mathbb{C}$,  the smallest dimension inner-faithful representation of $H$ acting on a quadratic  AS regular algebra $A$ of dimension 2 or 3, homogeneously and preserving the grading, is determined. Each invariant subring $A^H$ is determined.  When $A^H$ is also AS regular, thus providing a generalization of the Chevalley--Shephard--Todd Theorem, we say that $H$ is a reflection Hopf algebra for $A$.
\end{abstract}
\section{Introduction}

Throughout let $\kk = \mathbb{C}$, and denote the square root of $-1$ by $\i$.  A finite subgroup $G$ of GL$_n(\kk)$, acting linearly as graded automorphisms on a (commutative) polynomial ring $A = \kk[x_1, \dots, x_n]$, is called a {\em reflection group} if the group is generated by  elements $g \in G$, which act on the vector space $\bigoplus \kk x_i$ with fixed subspace of codimension 1; this condition is equivalent to the condition that all the eigenvalues of  $g$ are $1$, with the exception of one eigenvalue that is a root of unity (sometimes such elements $g$ are called {\em pseudoreflections} when the exceptional eigenvalue is a root of unity that is not $-1$).  Chevalley \cite{Chev} and Shephard and Todd \cite{SheTod} showed that over a field of characteristic zero, a group $G$  is a reflection group if and only if the invariant subalgebra $A^G$ is a polynomial ring, and Shephard and Todd  \cite{SheTod} presented a complete classification of the reflection groups into three infinite families and 34 exceptional groups.  Reflection groups have played an important role in many contexts, including in representation theory, combinatorics, commutative ring theory, and algebraic geometry.

There has been interest in extending the Chevalley--Shephard--Todd theorem to a noncommutative context (replacing the commutative polynomial ring with a noncommutative algebra $A$), and in \cite[Definition 2.2]{KKZ1} an analog of a reflection (called a {\em quasi-reflection} in that paper) was defined for a graded automorphism $g$ of an Artin--Schelter regular (AS regular) algebra $A$ that is generated in degree 1 (Definition~\ref{def:asregular}).  When such an AS regular algebra $A$  is commutative, it is isomorphic to a commutative polynomial ring, so this particular noncommutative setting generalizes the classical commutative polynomial algebra case.  Moreover, examples suggest that the proper analog of a reflection group for $A$ is a group $G$ such that the invariant subalgebra $A^G$ is also AS regular. An extended notion of ``reflection" introduced in \cite{KKZ1}  (that involves ``trace functions'' rather than eigenvalues)  was used in  \cite{KKZ2} to prove a version of the Chevalley--Shephard--Todd Theorem for groups acting on skew polynomial rings (a second proof was given in \cite{BB}).  

 To extend classical invariant theory further, to a noncocommutative context, the group $G$ can be replaced by a semisimple Hopf algebra $H$ (see  \cite{KKZ3}). Several results for the action of a finite subgroup of $\SL_{2}(\kk)$ on $\kk[u,v]$ have been extended to this context (e.g. \cite{CKWZ, CKWZ1, CKWZ2}). However, it has appeared more difficult to extend the Chevalley--Shephard--Todd Theorem to this Hopf setting.  To this end we consider a pair $(A,H)$, where $A$ is an AS regular algebra and $H$ is a (finite-dimensional) semisimple Hopf algebra, equipped with an action of $H$ on $A$ that is homogeneous, preserves the grading, and is inner-faithful on $A$ (meaning that no non-zero Hopf ideal of $H$ annihilates $A$). We call $H$ a {\em reflection Hopf algebra} for $A$ (\cite[Definition 3.2]{KKZ6}) when the ring of invariants $A^H$ is AS regular.  It is not yet clear what conditions on the pair $(A,H)$ and the action force $A^H$ to be AS regular. In \cite[Examples 7.4 and 7.6]{KKZ3}, it was shown that the Kac-Paljutkin algebra \cite{KP} is a reflection Hopf algebra for both $A = \kk_{-1}[u,v]$ and $A = \kk_{\i}[u,v]$.  In \cite{KKZ6}, actions of duals of group algebras $H = \kk G^\circ$ (or equivalently, group coactions) were considered, and some {\em dual reflection groups} were constructed. In \cite{FKMW}, three infinite families of Hopf algebras were shown to be reflection Hopf algebras. The goal of this paper is to provide further data toward a better understanding of reflection Hopf algebras.  

 The Kac-Paljutkin  algebra of dimension 8 is the smallest-dimensional nontrivial semisimple Hopf algebra (in the sense that it is not isomorphic to a group algebra or its dual). For primes $p$ and $q$ it is known that semisimple Hopf algebras of dimension $p$ \cite{Zhu}, $2p$ \cite{M2}, $p^2$ \cite{M3}, and $pq$ \cite{EG, GW} are trivial. The families considered in \cite{FKMW} include the two nontrivial semisimple Hopf algebras of dimension twelve.
 Hence the next dimension to consider is sixteen.
 Y. Kashina \cite[Table 1] {kashina} presented an explicit description of the sixteen nontrivial semisimple Hopf algebras of dimension sixteen, and we follow her classification. In particular, we refer to each of these Hopf algebras by the number given in \cite{kashina}.
 For each of these sixteen Hopf algebras $H$, we first classify the inner-faithful representations of $H$ using Theorem~\ref{innerfaithful}; the inner-faithful condition is included so that the action does not factor through a Hopf algebra of smaller dimension (which might be a group algebra). The table below summarizes the inner-faithful representations of smallest dimension for each the sixteen-dimensional Hopf algebras.\\

\begin{center}
 \begin{tabular}{|>{\centering\arraybackslash}m{1.6cm}|>{\centering\arraybackslash}m{2.7cm}|c|c|c|}
\hline
Hopf algebra & Lowest dimension inner-faithful representation & Theorem\\
\hline \hline 1, 3 & $3$ & Theorem~\ref{innerfaithful13}\\
\hline
2, 4 & $3$ & Theorem~\ref{innerfaithful24}\\
\hline 5, 7   & $2$  & Theorem~\ref{fixedrings57} \\
\hline 6, 10, 11  &   $3$  &  Theorem~\ref{fixedring61011}\\
\hline 8, 9 &   $3$  & Theorem~\ref{fixedrings89} \\
\hline 12 &  $2$  & Theorem~\ref{fixedring12}  \\
\hline 13  &  $2$  & Theorem~\ref{fixedring13}   \\
\hline 14  &   $3$  &  Theorem~\ref{fixedring14}\\
\hline 15  &  $3$  & Theorem~\ref{fixedring15} \\
\hline 16   & $2$  &  Theorem~\ref{fixedring16}\\
\hline
\end{tabular}
\end{center} 
~\\
~\\
 For each sixteen-dimensional semisimple Hopf algebra $H$ there is  an AS regular algebra $A$ of dimension 2 or 3 on which it acts inner-faithfully.
After finding the inner-faithful representations of $H$, we determine  a family  of the smallest-dimensional quadratic AS regular algebras $A$ on which $H$ acts inner-faithfully, compute the invariant subrings $A^H$, and determine which are AS regular.
The table below provides references for the invariant subrings, and lists the Hopf algebras that are reflection Hopf algebras for some lowest dimension inner-faithful representation.

\begin{center}
 \begin{tabular}{|>{\centering\arraybackslash}m{1.6cm}|>{\centering\arraybackslash}m{2.7cm}|c|c|c|}
\hline
Hopf algebra & Reflection representation & Theorem\\
\hline
1,2 & Yes & Theorem~\ref{fixedrings1234}\\
\hline
3,4 & No & Theorem~\ref{fixedrings1234}\\
\hline
5 & Yes & Theorem~\ref{fixedrings57}\\
\hline
6 & Yes & Theorem~\ref{fixedring61011}\\
\hline
7 & No & Theorem~\ref{fixedrings57}\\
\hline
8, 9 & No & Theorem~\ref{fixedrings89}\\
\hline
10, 11  & No & Theorem~\ref{fixedring61011}\\
\hline
12 & Yes & Theorem~\ref{fixedring12}\\
\hline
13 & No & Theorem~\ref{fixedring13}\\
\hline
14 & Yes & Theorem~\ref{fixedring14}\\
\hline
15 & Yes & Theorem~\ref{fixedring15}\\
\hline
16 & Yes & Theorem~\ref{fixedring16}\\
\hline
\end{tabular}
\end{center}
~\\
~\\
For each of the AS regular invariant subrings in the table above, the product of the degrees of the minimal generators of the invariants is equal to 16.  This observation leads to the following conjecture, which holds for group actions on commutative polynomial rings, as well as all Hopf actions that we have computed in the non(co)commutative setting:

 \begin{Conjecture}
When a semisimple Hopf algebra $H$ acts on an AS regular algebra $A$ and the invariant subring $A^H$ is AS regular, there is a minimal generating set of the algebra $A^H$ that has the property that the product of the degrees of the minimal generators is less than or equal to the dimension of $H$ over $\Bbbk$.  An example \cite[Example 1.2(2)]{KWZ}) shows that the product of the degrees is not always equal to the dimension of $H$ over $\Bbbk$.
\end{Conjecture}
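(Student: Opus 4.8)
The plan is to reduce the conjecture to a leading-term computation of the Hilbert series of $A^H$ together with a purely ring-theoretic inequality for AS regular algebras. Write $n = \GKdim A$, and recall the classical mechanism being generalized: for a reflection group $G$ acting on $A = \kk[x_1,\dots,x_n]$, Molien's formula exhibits a pole of order exactly $n$ at $t=1$ in $H_{A^G}(t)$ coming solely from the identity element, with leading coefficient $1/|G|$; comparing with $H_{A^G}(t) = \prod_{i=1}^n(1-t^{d_i})^{-1}$ yields the identity $\prod_i d_i = |G|$. The conjecture asserts the $\le$-version of this identity in the Hopf setting.

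First I would prove that
\[
\lim_{t\to 1}(1-t)^{n}H_{A^H}(t) \;=\; \frac{1}{\dim_\kk H}.
\]
The cleanest route avoids a Hopf-theoretic Molien formula entirely. Since $A$ is module-finite over $A^H$ (a Noether normalization valid for actions of finite-dimensional semisimple Hopf algebras), one has $\GKdim A^H = \GKdim A = n$, and $A^H$ being AS regular means it has global dimension $n$. A homogeneous system of parameters for $A^H$ is then a homogeneous system of parameters for $A$, hence a regular sequence on $A$ because $A$ is Cohen--Macaulay (automatic in the dimensions considered here), so $\operatorname{depth}_{A^H}A = n$, and graded Auslander--Buchsbaum forces $A$ to be a graded free $A^H$-module. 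Its rank is $\dim_\kk H$: if it is free of rank $r$, then, using that the Auslander map $A\#H\to\operatorname{End}_{A^H}(A)$ is an isomorphism in this setting, one gets $A\#H\cong\operatorname{End}_{A^H}(A)\cong M_r(A^H)$, and comparing ranks as left $A^H$-modules ($r^2$ on one side, $r\cdot\dim_\kk H$ on the other) gives $r=\dim_\kk H$. Then $H_A(t) = M(t)H_{A^H}(t)$ with $M(t)$ the Hilbert series of a homogeneous $A^H$-basis of $A$ and $M(1) = \dim_\kk H$; since $A$ is quadratic AS regular generated in degree $1$, $H_A(t) = (1-t)^{-n}$, and the displayed limit follows. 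This is the Hopf-algebraic analogue of $\prod_i d_i = |G|$.

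The remaining step, which I expect to be the genuine obstacle, is the inequality: \emph{if $R$ is a connected graded AS regular algebra whose minimal generating set has degrees $e_1,\dots,e_m$, then}
\[
\prod_{i=1}^{m} e_i \;\le\; \Bigl(\lim_{t\to 1}(1-t)^{\GKdim R}H_R(t)\Bigr)^{-1},
\]
\emph{with equality exactly when $R$ has a homogeneous generating system that forms a regular sequence.} Taking $R = A^H$ and combining with the previous step gives $\prod_i e_i \le \dim_\kk H$, with equality in the ``polynomial-like'' case --- which is precisely what occurs for every AS regular $A^H$ appearing in the dimension-sixteen tables, explaining the uniform value $16$ observed there. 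To attack the inequality I would analyze the denominator $p_R(t)$ of $H_R(t) = 1/p_R(t)$: it has the shape $1 - \sum_i t^{e_i} + (\text{higher-degree syzygy contributions})$, is divisible by $(1-t)^{\GKdim R}$, and obeys the AS--Gorenstein functional equation $p_R(t^{-1}) = \pm t^{-\ell}p_R(t)$; the target is to show that $p_R(t)/(1-t)^{\GKdim R}$ at $t=1$ is at least the value of $\prod_i(1-t^{e_i})/(1-t)^{m}$ at $t=1$, namely $\prod_i e_i$ --- i.e. that replacing the commutative polynomial model on generators of those degrees by $R$ (imposing relations and higher syzygies, possibly with fewer ``free'' parameters) can only increase this quantity. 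A Noether-normalization estimate --- realizing $R$ as a finite free module over a commutative polynomial subring generated by a homogeneous system of parameters and bounding the degree shifts of a free basis in terms of the $e_i$ --- seems the most promising line. This inequality is exactly where non-commutativity enters, where equality can genuinely fail (cf. \cite[Example 1.2(2)]{KWZ}), and is the reason the statement is recorded here only as a conjecture.
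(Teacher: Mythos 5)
First, a point of order: the paper offers no proof of this statement --- it is recorded as a \emph{Conjecture}, supported only by the empirical observation that the product of the generator degrees equals $16$ for every AS regular invariant ring computed in the paper, together with the classical identity $\prod_i d_i = |G|$ for reflection groups. Your proposal is likewise not a proof, and you say as much: the inequality $\prod_i e_i \le \bigl(\lim_{t\to 1}(1-t)^{\GKdim R}H_R(t)\bigr)^{-1}$ for AS regular $R$, which carries the entire content of the conjecture, is left unestablished. At best you have proposed a reduction, not a solution.

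Second, the reduction itself has a genuine gap. Your computation of the rank of $A$ over $A^H$ invokes the Auslander map $A\# H\to\operatorname{End}_{A^H}(A)$ being an isomorphism ``in this setting.'' But the Auslander theorem holds precisely when the action is \emph{small} (contains no reflections); when $H$ is a reflection Hopf algebra for $A$ --- the only case the conjecture concerns --- the Auslander map fails to be surjective. Already for $C_2$ acting on $\kk[x]$ by $x\mapsto -x$, the matrix unit $e_{12}\in\operatorname{End}_{\kk[x^2]}(\kk[x])\cong M_2(\kk[x^2])$ would require a preimage $a+bg$ with $a+b=0$ and $2ax=1$, which does not exist in $\kk[x]\# C_2$. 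So the identification $A\# H\cong M_r(A^H)$ is false exactly where you need it, and the conclusion $r=\dim_\kk H$ requires a different argument --- in the group case it comes from Galois theory of the fraction fields, and a correct Hopf analogue (say via the normalized integral and a Molien-type trace formula, or an $H^*$-Galois condition on the quotient division rings) would have to be supplied. The freeness of $A$ over $A^H$ also leans on a Cohen--Macaulay property of $A$ as an $A^H$-module that you assert is ``automatic in the dimensions considered here,'' whereas the conjecture is stated for arbitrary AS regular $A$. None of this is fatal to the strategy --- the limit formula $\lim_{t\to1}(1-t)^nH_{A^H}(t)=1/\dim_\kk H$ is plausibly provable by other means, and isolating the leading-coefficient inequality for AS regular algebras is a reasonable way to frame the problem --- but as written the first step does not stand, and the second step is openly missing, so the statement remains a conjecture.
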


The paper is organized as follows.  Background material is presented in Section~\ref{background}, and the subsequent sections consider the Hopf algebras $H$ of dimension sixteen, organized by their groups of grouplike elements $\bG(H)$. In Section~\ref{c2xc2xc2} we consider the case $\bG(H) = C_2 \times  C_2 \times C_2$ (\kashina 1--4), in Section~\ref{c4xc2} we consider the case $\bG(H) = C_4 \times  C_2 $ (\kashina  5--11), in Section~\ref{d8} we consider the case $\bG(H) = D_8$ (\kashina 12 and 13), and in Section~\ref{c2Xc2} we consider the case $\bG(H) = C_2 \times C_2$ (\kashina  14--16). We remark that \kashina 14 and \kashina 16 occur as members of the infinite families considered in \cite{FKMW}; we summarize the results here for the sake of completeness.

Many computations in this paper were aided by the use of the NCAlgebra package in the computer algebra
system Macaulay2 \cite{M2}.\\
~\\
\noindent {\bf Acknowledgment:} We appreciate the careful reading of the manuscript by the referee who caught several mistakes and suggested a simplification of our arguments for inner-faithfulness, which we have adopted.

\section{Background}\label{background}
This section provides notation and background results that will be used throughout. 
Throughout, we follow the standard notation ($\Delta, \epsilon, S$) for the comultiplication, counit, and antipode of a Hopf algebra $H$, and refer the reader to \cite{Mbook} for any undefined terminology concerning Hopf algebras.  Further, we follow  the notation and numbering used in Kashina's classification of the semisimple Hopf algebras of dimension sixteen that are neither commutative nor cocommutative \cite[Table 1]{kashina}. 
\subsection{AS Regular Algebras}

All sixteen of the dimension sixteen Hopf algebras act on Artin--Schelter regular algebras, which are algebras possessing the homological properties of commutative polynomial rings.
\begin{definition}
\label{def:asregular}
Let $A$ be a connected graded algebra. Then $A$ is
{\it Artin--Schelter (AS) regular} if
\begin{enumerate}
\item[(1)] $A$ has finite global dimension,
\item[(2)] $A$ has finite Gelfand-Kirillov dimension, and
\item[(3)] $A$ is {\it Artin--Schelter Gorenstein}, i.e.,
\begin{enumerate}
\item[(a)] $A$ has finite injective dimension $d < \infty$, and
\item[(b)] Ext$^i_A(\kk,A) = \delta_{i,d} \cdot \kk(l)$ for some
$l \in \mathbb{Z}$.
\end{enumerate}
\end{enumerate}
\end{definition}

Examples of AS regular algebras include skew polynomial rings and graded Ore extensions of AS regular algebras.  We will use the following well-known fact (see e.g. \cite[Lemma 1.2]{CKZ}) to show that an invariant subring is not AS regular.

\begin{lemma} \label{notregular}
If $A$ is an AS regular algebra of GK dimension 2 (resp. 3), then $A$ is generated by 2 elements (resp. 2 or 3 elements).
\end{lemma}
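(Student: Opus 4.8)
The plan is to read the number of minimal algebra generators of $A$ off the minimal graded free resolution of the trivial module, combining the AS Gorenstein symmetry with a Hilbert series computation. Since $A$ is AS regular with $\GKdim A < \infty$, one has $\gldim A = \GKdim A$ (a standard fact for AS regular algebras), so it suffices to show: if $d := \gldim A = 2$ then $A$ has exactly $2$ generators, and if $d = 3$ then $A$ has $2$ or $3$ generators. Let $0 \to P_d \to P_{d-1} \to \cdots \to P_1 \to P_0 = A \to \kk \to 0$ be the minimal graded free resolution of $\kk = A/A_{\geq 1}$, with $P_i = \bigoplus_j A(-j)^{\beta_{ij}}$. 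The number of minimal generators of $A$ equals $n := \operatorname{rank} P_1 = \dim_\kk \operatorname{Tor}^A_1(\kk,\kk)$, and the AS Gorenstein condition forces $P_d \cong A(-l)$, free of rank one on a single generator. Taking the alternating sum of Hilbert series along the resolution gives $H_A(t)\,q(t) = 1$, where $q(t) = \sum_{i=0}^d (-1)^i p_i(t)$ and $p_i(t) = \sum_j \beta_{ij} t^j$; note $q(0) = 1$ and $p_d(t) = t^l$. Because $\GKdim A = d \geq 1$, the point $t = 1$ is a pole of $H_A(t)$, so $q(1) = 0$.

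For $d = 2$ this reads $0 = q(1) = 1 - p_1(1) + 1 = 2 - n$, hence $n = 2$. For $d = 3$ it reads $0 = q(1) = 1 - p_1(1) + p_2(1) - 1$, i.e. $p_2(1) = p_1(1) = n$, so $A$ has equally many minimal generators as minimal relations. Since a GK-dimension $3$ algebra is infinite dimensional and is not a polynomial ring in one variable, $n \geq 2$. To rule out $n \geq 4$ I would exploit the self-duality of the resolution coming from AS Gorenstein-ness, which yields $\beta_{ij} = \beta_{d-i,\,l-j}$ and hence the functional equation $q(t) = -t^l q(1/t)$ (so the generator degrees $a_1 \leq \cdots \leq a_n$ and the relation degrees are interchanged by $b \mapsto l-b$), together with the positivity and the polynomial growth of order exactly $3$ of $H_A(t) = 1/q(t)$. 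Writing $q(t) = (1-t)^3 g(t)$ with $g(0) = 1$ and comparing the first few coefficients shows, for instance, that the number of degree-one generators is $3 - c_1$ where $g(t) = 1 + c_1 t + \cdots$; pushing this coefficient analysis, using the known structure of AS regular algebras of global dimension $3$, forces $n \leq 3$. This last step is established in the cited \cite[Lemma~1.2]{CKZ}.

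The main obstacle is precisely this final step. Everything through ``as many generators as relations'' is formal bookkeeping with the minimal resolution and its Hilbert series, but excluding $n \geq 4$ in global dimension $3$ genuinely invokes the finer structure theory of such algebras (in effect the Artin--Schelter and Artin--Tate--Van den Bergh analysis), which is why it is natural to quote the statement rather than reprove it in full here.
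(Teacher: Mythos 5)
The paper offers no proof of this lemma; it is invoked as a known result with a pointer to \cite[Lemma 1.2]{CKZ}, so there is no internal argument to compare yours against, and I assess the sketch on its own terms. The Hilbert-series bookkeeping is set up correctly, and the global dimension $2$ case genuinely closes: with $P_0 = A$ and $P_2 \cong A(-l)$ both of rank one, $q(1) = 0$ forces $\operatorname{rank} P_1 = 2$. The symmetry ``as many minimal generators as minimal relations'' in dimension $3$ is also correct. But there are two real gaps. The first is the opening claim that $\gldim A = \GKdim A$ is ``a standard fact'': for AS regular algebras in the sense of Definition~\ref{def:asregular} this equality is not formal --- in general it is a conjecture, and in dimensions $2$ and $3$ it is a theorem only by way of the classification results (Artin--Schelter, Artin--Tate--Van den Bergh, Stephenson) that your sketch is trying not to invoke. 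So the hypothesis ``$\GKdim A = 2$ or $3$'' is already being converted into ``$\gldim A = 2$ or $3$'' by an appeal to the same structure theory that the rest of the argument needs.

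The second and more serious gap is the bound $n \leq 3$ in dimension $3$. The formal consequences of $(1-t)^3 \mid q(t)$ together with the duality $p_2(t) = t^l p_1(1/t)$ reduce to the single numerical identity $(n-1)l = 2\sum_i a_i$, where $a_1, \dots, a_n$ are the generator degrees (the condition $q''(1)=0$ is then automatic), and this does not exclude $n \geq 4$ even after imposing positivity. Concretely, generator degrees $(1,1,2,2)$ with $l = 4$ give $q(t) = 1 - 2t + 2t^3 - t^4 = (1-t)^3(1+t)$: the duality holds, $(1-t)^3$ divides $q$ while $(1-t)^4$ does not, all relation degrees are at least $2$, and $1/q(t)$ has strictly positive coefficients. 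Ruling this out requires the actual structure theory of dimension-$3$ AS regular algebras, and your resolution of that step is to cite \cite[Lemma 1.2]{CKZ} --- which is the statement being proved, so the argument is circular at its crux. If you add the hypothesis that $A$ is generated in degree one (which covers every algebra appearing in this paper), the case does close by your method: then $p_1(t) = nt$, $p_2(t) = nt^{l-1}$, and $q'(1) = 0$ gives $l = 2n/(n-1)$, an integer only for $n \in \{2,3\}$. Carrying out that one-line computation instead of gesturing at a ``coefficient analysis'' would make your sketch a complete proof in the cases the paper actually uses.
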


We will encounter algebras of the form $\kk\langle u,v \rangle/(u^2 - cv^2)$ for $c \in \kk^\times$ several times 
in this paper, so we record here the following lemma which identifies a set of monomials which forms a $\kk$-basis.
\begin{lemma} \textup{(\cite[Lemma 1.3]{FKMW})} \label{lem:u2v2Basis}
Let $c \in \kk^\times$ and
$A = \kk\langle u,v \rangle/(u^2 - cv^2)$.  Then $A$ is AS regular, and 
the monomials $\{u^i(vu)^jv^\ell\}$ with $i,j$ nonnegative integers
and $\ell \in \{0,1\}$ form a $\kk$-basis of $A$.
\end{lemma}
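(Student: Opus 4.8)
The plan is to get AS regularity from a linear change of variables that realizes $A$ as a skew polynomial ring, and to get the monomial basis from an elementary rewriting argument together with a dimension count.

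\emph{AS regularity.} Since $\kk = \mathbb{C}$ and $c \in \kk^\times$, I would fix a square root $\sqrt c$ and set $a = u + \sqrt c\,v$ and $b = u - \sqrt c\,v$; since $u = \tfrac12(a+b)$ and $v = \tfrac1{2\sqrt c}(a-b)$, the pair $a,b$ also generates $A$. One computes $ab + ba = 2(u^2 - cv^2)$, so $ab = -ba$ holds in $A$; conversely the assignment $u \mapsto \tfrac12(a+b)$, $v \mapsto \tfrac1{2\sqrt c}(a-b)$ carries $u^2 - cv^2$ to $\tfrac14\big((a+b)^2 - (a-b)^2\big) = \tfrac12(ab+ba)$, which vanishes in $\kk_{-1}[a,b]$. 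These two mutually inverse homomorphisms give $A \cong \kk_{-1}[a,b]$, a skew polynomial ring, hence AS regular; moreover its standard $\kk$-basis $\{a^i b^j : i,j \geq 0\}$ yields $\dim_\kk A_n = n+1$ for every $n \geq 0$.

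\emph{The basis.} Let $S$ denote the proposed set of monomials. First, $S$ has exactly $n+1$ elements in degree $n$: the equation $i + 2j + \ell = n$ has $\lfloor n/2\rfloor + 1$ solutions with $\ell = 0$ and $\lfloor (n-1)/2\rfloor + 1$ solutions with $\ell = 1$. Next, $S$ spans $A$. Indeed, from $u^2 = cv^2$ one gets both $v^2 = c\inv u^2$ and $vu^2 = v(cv^2) = cv^3 = (cv^2)v = u^2 v$ in $A$, so any word in $u$ and $v$ may be rewritten, using the substitutions $vv \mapsto c\inv uu$ and $vuu \mapsto uuv$, into a scalar multiple of a word containing neither $vv$ nor $vuu$ as a consecutive subword. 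Each substitution preserves the length of the word and strictly decreases it in the lexicographic order with $u < v$ on words of that fixed length, so the procedure terminates; and a word that avoids both $vv$ and $vuu$ is precisely one of the form $u^i(vu)^j v^\ell$. Hence $S$ spans $A$ in each degree.

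Finally, in degree $n$ the set $S$ is a spanning set of size $n+1$ for the $(n+1)$-dimensional space $A_n$, so it is linearly independent, and therefore a $\kk$-basis of $A$. I expect the only mildly delicate point to be the normal-form step --- verifying that those two rewriting moves suffice and that they strictly decrease a well-founded order so the process halts (equivalently, checking that the two-element set $\{v^2 - c\inv u^2,\ vu^2 - u^2v\}$ generates the relation ideal and is a noncommutative Gröbner basis) --- but this is routine bookkeeping rather than a genuine obstacle.
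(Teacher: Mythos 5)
Your argument is correct. The paper gives no proof of this lemma --- it is quoted directly from \cite[Lemma 1.3]{FKMW} --- but your route is the natural one: the change of variables $a = u + \sqrt{c}\,v$, $b = u - \sqrt{c}\,v$ sends the ideal $(u^2 - cv^2)$ to $(ab+ba)$, so $A \cong \kk_{-1}[a,b]$ is AS regular with $\dim_\kk A_n = n+1$; the rewriting rules $vv \mapsto c^{-1}uu$ and $vuu \mapsto uuv$ (both degree-preserving and strictly lex-decreasing, hence terminating) show the monomials $u^i(vu)^jv^\ell$ span; and the count $|S_n| = n+1$ forces linear independence. Using the dimension count from the skew polynomial presentation in place of verifying confluence of the overlaps is a legitimate and clean way to dispose of the only delicate step.
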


\subsection{Inner-faithful Actions}

A module $V$ over a Hopf algebra $H$ is \emph{inner-faithful} if the only
Hopf ideal which annihilates $V$ is the zero ideal.  We record the following
result which is due to Brauer \cite{Brau}, Burnside \cite{Burn} and Steinberg \cite{Stein}
in the case of a group algebra of a finite group, and due to  Rieffel \cite{R} (see also  Passman and Quinn \cite{PassQui})
in the case of a finite-dimensional semisimple Hopf algebra.  

\begin{theorem} \textup{(\cite[Theorem 1.4]{FKMW})} \label{innerfaithful}
Let $V$ be a module over a finite-dimensional semisimple Hopf algebra $H$.
Then the following conditions are equivalent.
\begin{enumerate}
\item $V$ is an inner-faithful $H$-module,
\item The tensor algebra $T(V)$ is a faithful $H$-module,
\item Every simple $H$-module appears as a direct summand of $V^{\otimes n}$
for some $n$.
\end{enumerate}
\end{theorem}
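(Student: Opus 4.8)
\emph{Proof sketch.} Since $\kk=\mathbb{C}$ is algebraically closed and $H$ is semisimple, $H\cong\prod_{i=1}^{r}M_{\dim V_i}(\kk)$ where $V_1=\kk,\dots,V_r$ are the simple $H$-modules, and every $H$-module is semisimple; in particular a (possibly infinite) semisimple module is faithful precisely when each $V_i$ occurs in it as a summand. Applied to $T(V)=\bigoplus_{n\ge0}V^{\otimes n}$ this yields $(2)\Leftrightarrow(3)$ immediately, since the simple summands of $T(V)$ are exactly those of the various $V^{\otimes n}$. The implication $(2)\Rightarrow(1)$ is also routine and needs no semisimplicity: if a Hopf ideal $J$ annihilates $V$, then writing $\Delta(x)\in J\otimes H+H\otimes J$ for $x\in J$ (and using $\epsilon(J)=0$ for the degree-zero piece) shows by induction that $J$ annihilates every $V^{\otimes n}$, so $J\subseteq\operatorname{Ann}_H(T(V))=0$. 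Thus the entire content of the theorem is the implication $(1)\Rightarrow(2)$: if $V$ is inner-faithful then $\operatorname{Ann}_H(T(V))=\bigcap_{n\ge0}\operatorname{Ann}_H(V^{\otimes n})$ vanishes.

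To prove $(1)\Rightarrow(2)$, the plan is to show that $I:=\bigcap_{n\ge0}\operatorname{Ann}_H(V^{\otimes n})$ is a \emph{Hopf} ideal; since $I\subseteq\operatorname{Ann}_H(V)$, inner-faithfulness then forces $I=0$. (This is precisely the Burnside-type theorem for Hopf algebras of Rieffel~\cite{R} and of Passman and Quinn~\cite{PassQui}, which identifies $I$ as the largest Hopf ideal contained in $\operatorname{Ann}_H(V)$.) To see that $I$ is a Hopf ideal I would dualize. By Larson--Radford, a semisimple Hopf algebra over $\mathbb{C}$ is also cosemisimple with $S^2=\operatorname{id}$, so $A:=H^{*}$ is a finite-dimensional, semisimple, cosemisimple Hopf algebra and finite-dimensional left $H$-modules are the same as right $A$-comodules. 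Let $C\subseteq A$ be the coefficient subcoalgebra of $V$ (the span of the matrix entries of the $A$-coaction in a basis of $V$); then $\operatorname{Ann}_H(V)=C^{\perp}$, and since coactions on tensor products multiply matrix entries, $C_{V^{\otimes n}}$ equals the span $C^{n}$ of $n$-fold products in $A$. Hence $I^{\perp}=\sum_{n\ge0}C^{n}$ is the finite-dimensional subbialgebra of $A$ generated by the subcoalgebra $C$, and a finite-dimensional subbialgebra of a Hopf algebra is a Hopf subalgebra; therefore $I=\bigl(\sum_{n}C^{n}\bigr)^{\perp}$ is a Hopf ideal, as required. Moreover once $I=0$ we get $\sum_{n}C^{n}=A$, i.e.\ every simple $H$-module occurs in some $V^{\otimes n}$, so $(3)$ also drops out.

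The step I expect to be the real obstacle is the antipode-stability of $B:=\sum_{n}C^{n}$ — equivalently, that $I$ is stable under the antipode of $H$, equivalently that $V^{*}$ occurs as a summand of some tensor power of $V$. The subalgebra and subcoalgebra properties of $B$ are formal, but stability under $S_A$ is not, and this is exactly where finite-dimensionality (through Larson--Radford's $S^{2}=\operatorname{id}$ and the argument of \cite{R,PassQui}) enters. If one prefers not to invoke the general fact that finite-dimensional subbialgebras of Hopf algebras are Hopf subalgebras, one can instead run the whole argument with $V\oplus V^{*}$ in place of $V$: its coefficient coalgebra $C+S_A(C)$ is manifestly $S_A$-stable, so the subbialgebra it generates is automatically a Hopf subalgebra, and one concludes that every simple $H$-module occurs in some $(V\oplus V^{*})^{\otimes n}$; a Brauer-type trace computation — again using inner-faithfulness — then pushes this down to a tensor power of $V$ alone.
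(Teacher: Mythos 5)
The paper does not prove this statement: it is quoted verbatim from \cite[Theorem 1.4]{FKMW} and attributed to Rieffel \cite{R} and Passman--Quinn \cite{PassQui}, so there is no in-paper argument to compare yours against. Judged on its own, your sketch is a correct reconstruction of the standard proof from those sources. The equivalence $(2)\Leftrightarrow(3)$ and the implication $(2)\Rightarrow(1)$ are exactly as routine as you say, and you correctly locate all of the content in $(1)\Rightarrow(2)$, namely in showing that $\operatorname{Ann}_H(T(V))=\bigl(\sum_n C^n\bigr)^{\perp}$ is stable under the antipode. Your primary route through the lemma that a finite-dimensional sub-bialgebra $B$ of a Hopf algebra is a Hopf subalgebra does close this gap: that lemma is standard and can be proved by noting that $\operatorname{id}_B$ is convolution-invertible in $\operatorname{Hom}(B,H^*)$ (with inverse the restricted antipode) and algebraic in the finite-dimensional convolution algebra $\operatorname{Hom}(B,B)$, so its inverse already lies in $\operatorname{Hom}(B,B)$; this is where finite-dimensionality genuinely enters, not Larson--Radford. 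By contrast, your fallback route via $V\oplus V^{*}$ does not obviously terminate: knowing that every simple occurs in some $(V\oplus V^{*})^{\otimes n}$ only reduces the problem to showing that $V^{*}$ is a constituent of some $V^{\otimes m}$, which is precisely the duality-closure statement you were trying to avoid, and the ``Brauer-type trace computation'' is not supplied. So keep the first route, cite or prove the sub-bialgebra lemma explicitly, and drop or substantiate the alternative.
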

 The following lemma is used to prove that actions are not inner-faithful.
\begin{lemma}\textup{(\cite[Lemma 1.7]{FKMW})} \label{HopfIdeal}
Let $H$ be a Hopf algebra and $g\in\bG(H)$. Then the two-sided ideal generated by $1-g$ is a Hopf ideal. 
\end{lemma}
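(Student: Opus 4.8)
The plan is to verify directly the three defining conditions of a Hopf ideal for the two-sided ideal $I := (1-g)$ of $H$. Recall that a two-sided ideal $I \subseteq H$ is a Hopf ideal precisely when $\epsilon(I) = 0$, $\Delta(I) \subseteq I \otimes H + H \otimes I$, and $S(I) \subseteq I$. Since $I$ is a two-sided ideal by construction, the strategy in each case is to check the relevant condition on the single generator $1-g$ and then propagate it using the (anti-)multiplicativity of $\epsilon$, $\Delta$, and $S$ together with the fact that an algebra (anti-)homomorphism sends the ideal generated by a set $X$ into the ideal generated by the image of $X$.

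First I would dispatch the counit: as $g\in\bG(H)$ we have $\epsilon(g)=1$, so $\epsilon(1-g)=0$, and since $\epsilon$ is an algebra homomorphism, $\epsilon\bigl(h(1-g)h'\bigr)=\epsilon(h)\epsilon(1-g)\epsilon(h')=0$ for all $h,h'\in H$, whence $\epsilon(I)=0$. Next, for comultiplication, I would use $\Delta(g)=g\otimes g$ to compute
\[
\Delta(1-g) = 1\otimes 1 - g\otimes g = 1\otimes(1-g) + (1-g)\otimes g \;\in\; H\otimes I + I\otimes H.
\]
The key observation is that $H\otimes I + I\otimes H$ is itself a two-sided ideal of $H\otimes H$ (because $I$ is a two-sided ideal of $H$), so since $\Delta\colon H\to H\otimes H$ is an algebra homomorphism, $\Delta$ carries the ideal generated by $1-g$ into the ideal generated by $\Delta(1-g)$, giving $\Delta(I)\subseteq H\otimes I + I\otimes H$. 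Finally, for the antipode, $S(g)=g\inv$ gives $S(1-g)=1-g\inv=-g\inv(1-g)\in I$, and since $S$ is an algebra anti-homomorphism, $S\bigl(h(1-g)h'\bigr)=S(h')S(1-g)S(h)\in S(h')\,I\,S(h)\subseteq I$, so $S(I)\subseteq I$. Combining the three verifications shows that $I$ is a Hopf ideal.

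I do not anticipate any genuine obstacle: each of the three conditions reduces to a one-line computation on $1-g$. The only point requiring a moment's care — and worth stating explicitly in the write-up — is the structural remark that an algebra (anti-)homomorphism maps the two-sided ideal generated by a set into the two-sided ideal generated by its image, which is what lets us pass from the generator $1-g$ to all of $I$; together with the elementary check that $H\otimes I + I\otimes H$ is an ideal of $H\otimes H$, this makes the argument completely formal.
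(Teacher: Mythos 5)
Your proof is correct: the three computations on the generator $1-g$ (counit, the decomposition $\Delta(1-g)=1\otimes(1-g)+(1-g)\otimes g$, and $S(1-g)=-g\inv(1-g)$) together with the propagation via (anti-)multiplicativity constitute the standard argument. The paper itself gives no proof, simply citing \cite[Lemma 1.7]{FKMW}, and your direct verification is exactly the argument one expects to find there.
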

\subsection{The Grothendieck Ring}

To determine if a particular representation $V$ of a semisimple Hopf algebra $H$ is inner-faithful, by Theorem~\ref{innerfaithful} one can compute the decomposition of tensor powers of $V$ into irreducible $H$-modules, i.e., to perform computations in the Grothendieck ring $K_0(H)$. 

As noted in \cite{kashina}, the nontrivial Hopf algebras of dimension sixteen have two possible Wedderburn decompositions:
\[\kk^{\oplus 8} \oplus M_2(\kk) \oplus M_2(\kk)\]
or 
\[\kk^{\oplus 4} \oplus M_2(\kk) \oplus M_2(\kk) \oplus M_2(\kk)\]
and hence $H$ has either two non-isomorphic irreducible two-dimensional $H$-modules and eight one-dimensional $H$-modules, or three non-isomorphic irreducible two-dimensional $H$-modules and four one-dimensional $H$-modules. In what follows we shall use the notation $\pi(u,v)$ for a two-dimensional irreducible $H$-module with vector space basis $u,v$, and $T(t)$ for a one-dimensional irreducible $H$-module with vector space basis $t$. 

In this paper, the graded quadratic AS regular algebras $A$ on which $H$ acts will be generated as algebras by either two or three elements.
In order for $H$ to act inner-faithfully on $A$, the vector space $A_1$ of degree 1 elements of $A$ must be isomorphic, as an $H$-module, to a two-dimensional irreducible representation $A_1 = \kk u \oplus \kk v = \pi(u,v)$ or to the sum of an irreducible two-dimensional and a one-dimensional representation $A_1 = \kk u \oplus \kk v \oplus \kk t = \pi(u,v) \oplus T(t)$. 

We record here the following lemma which identifies inner-faithful actions in many of the cases
that we will consider.

\begin{theorem} \label{dim2and3InnerFaithful}
Let $H$ be a sixteen-dimensional semisimple Hopf algebra such that $|\bG(H^*)| = 8$.  Let $\pi$ be
an irreducible two-dimensional representation.  Then:
\begin{enumerate}
\item The action of $H$ on $\pi$ is inner-faithful if and only if $\pi^* \not\cong \pi$.
\item Suppose that $\pi^* \cong \pi$, and that $\varphi$ is a one-dimensional representation.
Then the action of $H$ on $\pi \oplus \varphi$ is inner-faithful if and only if $\varphi$
is not a direct summand  $\pi\otimes\pi$.
\end{enumerate}
\end{theorem}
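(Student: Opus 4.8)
The plan is to work in the representation ring via Theorem~\ref{innerfaithful}: for a module $V$, write $\mathcal{S}(V)$ for the set of isomorphism classes of simple $H$-modules occurring in some tensor power $V^{\otimes n}$; then $V$ is inner-faithful if and only if $\mathcal{S}(V)$ is the set of all simple $H$-modules. The matrix coefficients of the various $V^{\otimes n}$ span a sub-bialgebra, hence a Hopf subalgebra, of $H^*$, and dualizing shows $\mathcal{S}(V)$ is precisely the set of simple modules of a quotient Hopf algebra $\bar H$ of $H$; since $\bar H$ is semisimple, $d(V) := \sum_{S\in\mathcal{S}(V)}(\dim_\kk S)^2 = \dim_\kk\bar H$, which divides $16$ by the Nichols--Zoeller theorem, and $V$ is inner-faithful exactly when $d(V)=16$. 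The hypothesis $|\bG(H^*)|=8$ forces the Wedderburn decomposition $\kk^{\oplus 8}\oplus M_2(\kk)^{\oplus 2}$, so $H$ has exactly two nonisomorphic two-dimensional irreducibles $\pi_1,\pi_2$; duality permutes $\{\pi_1,\pi_2\}$, and the eight one-dimensional modules form a group $G := \bG(H^*)$ that acts on the two-element set $\{\pi_1,\pi_2\}$ by $\chi\cdot\pi := \chi\otimes\pi$.

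For part~(1), first suppose $\pi := \pi_1 \cong \pi_1^*$. For a one-dimensional module $\chi$, the rigidity adjunction gives $\operatorname{Hom}_H(\chi,\pi\otimes\pi)\cong\operatorname{Hom}_H(\chi\otimes\pi^*,\pi)$, so using $\pi^*\cong\pi$ we see that $\chi$ is a constituent of $\pi\otimes\pi$ if and only if $\chi\otimes\pi\cong\pi$, with multiplicity one (as $\chi\otimes\pi$ is simple and $\kk=\mathbb{C}$); a symmetric computation shows this also amounts to $\pi\otimes\chi\cong\pi$. Thus the one-dimensional constituents of $\pi\otimes\pi$ are exactly the elements of $N := \operatorname{Stab}_G(\pi)$, each of multiplicity one, so $\dim_\kk(\pi\otimes\pi)=4$ gives $|N|\le 4$; but $[G:N]$ is the size of the $G$-orbit of $\pi$ in $\{\pi_1,\pi_2\}$, so $[G:N]\le 2$ and $|N|\ge 4$. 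Hence $|N|=4$ and $\pi\otimes\pi=\bigoplus_{\chi\in N}\chi$. Since every $\chi\in N$ satisfies $\chi\otimes\pi\cong\pi\cong\pi\otimes\chi$, induction on $n$ shows the simple constituents of $\pi^{\otimes n}$ are the elements of $N$ for $n$ even and just $\pi$ for $n$ odd; therefore $\mathcal{S}(\pi)=N\cup\{\pi\}$ and $d(\pi)=4+4=8<16$, so $\pi$ is not inner-faithful.

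Conversely, suppose $\pi:=\pi_1\not\cong\pi_1^*$ and, for contradiction, that $\pi$ is not inner-faithful. Then $d(\pi)$ is a divisor of $16$ with $5 = 1+(\dim_\kk\pi)^2\le d(\pi)<16$, so $d(\pi)=8$; thus $\bar H$ is an eight-dimensional semisimple Hopf algebra having $\pi$ as an irreducible module. Any semisimple Hopf algebra has a one-dimensional module (the trivial one), so the only Wedderburn decomposition of $\bar H$ involving an $M_2$-block is $\kk^{\oplus 4}\oplus M_2(\kk)$; hence $\pi$ is the unique two-dimensional irreducible of $\bar H$, and since $\pi^*$ is also such an irreducible we get $\pi^*\cong\pi$, a contradiction. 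So $\pi$ is inner-faithful, proving part~(1).

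For part~(2), assume $\pi\cong\pi^*$ and put $V=\pi\oplus\varphi$ with $\varphi$ one-dimensional. By part~(1), $\pi\otimes\pi=\bigoplus_{\chi\in N}\chi$ with $N=\operatorname{Stab}_G(\pi)$ of order $4$ and $\mathcal{S}(\pi)=N\cup\{\pi\}$, and $\varphi$ is a direct summand of $\pi\otimes\pi$ exactly when $\varphi\in N$. The one-dimensional modules in $\mathcal{S}(V)$ form a subgroup of $G$ (closed under products because $\mathcal{S}(V)$ is closed under $\otimes$, and under inverses because $G$ is finite), and it contains both $N$ (since $\pi\otimes\pi$ is a summand of $V^{\otimes 2}$) and $\varphi$, hence contains $\langle N,\varphi\rangle$. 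If $\varphi\notin N$, then $\langle N,\varphi\rangle$ strictly contains the index-two subgroup $N$, so $\langle N,\varphi\rangle=G$ and $d(V)\ge |G|+(\dim_\kk\pi)^2=12$; as $d(V)$ divides $16$ this forces $d(V)=16$, so $V$ is inner-faithful. If $\varphi\in N$, then $\varphi\otimes\pi\cong\pi\cong\pi\otimes\varphi$ lets one collapse any tensor word in $\pi$ and $\varphi$ (absorb each $\varphi$ into an adjacent $\pi$, or, for an all-$\varphi$ word, stay inside $\langle\varphi\rangle\subseteq N$), so every simple constituent of $V^{\otimes n}$ lies in $N\cup\{\pi\}$; thus $\mathcal{S}(V)=N\cup\{\pi\}$, $d(V)=8$, and $V$ is not inner-faithful. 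The one nonroutine step is the forcing of $|N|=4$ in part~(1): it combines the dimension count for $\pi\otimes\pi$ with orbit--stabilizer for the $G$-action on the two-element set $\{\pi_1,\pi_2\}$, and is exactly what yields the clean dichotomy between $\pi\cong\pi^*$ and $\pi\not\cong\pi^*$.
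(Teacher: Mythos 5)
Your proof is correct, and while it shares the paper's overall skeleton (everything turns on the order-four stabilizer $N=\{\chi\in\bG(H^*) : \chi\otimes\pi\cong\pi\}$), the mechanism you use in the two ``inner-faithful'' directions is genuinely different. The paper imports the fact that $\pi^{\otimes 2}$ is a multiplicity-free sum of four one-dimensional representations from the explicit case computations in \cite{kashina} and then, via \cite[Theorems 10.1 and 10.2]{NR}, exhibits every irreducible inside $\pi\otimes\pi$ or $\pi\otimes\pi^*$. You instead derive the decomposition of $\pi\otimes\pi$ abstractly (the rigidity adjunction identifies its one-dimensional constituents with $N$, and orbit--stabilizer for the $\bG(H^*)$-action on the two-element set of two-dimensional irreducibles forces $|N|=4$), and you replace the explicit exhibition of all simples by a divisibility argument: inner-faithfulness is equivalent to $d(V)=\dim\bar H=16$ for the Hopf-algebra quotient $\bar H$ through which the tensor powers of $V$ factor, and Nichols--Zoeller makes $d(V)$ a divisor of $16$, so a partial count ($d(V)\ge 12$ in part (2), or the forced Wedderburn type $\kk^{\oplus 4}\oplus M_2(\kk)$ of an eight-dimensional quotient in part (1)) already closes the argument. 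What this buys: your proof is self-contained apart from Nichols--Zoeller and the standard matrix-coefficient formulation of Rieffel's theorem, it does not depend on case-by-case calculations, and in part (2) it never needs to identify $\varphi\otimes\pi$ --- which incidentally sidesteps the paper's slightly garbled step asserting $\varphi\otimes\pi\cong\pi^*$ in a case where $\pi\cong\pi^*$ (what is meant there is the \emph{other} two-dimensional irreducible). The trade-off is reliance on Nichols--Zoeller, a heavier general input than anything the paper's proof invokes, and on the (standard, but here unproved) fact that the simple constituents of the tensor powers of $V$ are exactly the simples of a quotient Hopf algebra of $H$.
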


\begin{proof}
In this situation, the calculations appearing in \cite[pp. 652--654]{kashina} show that
$\pi^2$ is a multiplicity-free sum of four one-dimensional representations.

Suppose that $\pi^* \cong \pi$.  Then by \cite[Theorem 10.1]{NR},
$\pi \cong \pi\otimes\varphi$ for some one-dimensional representation
$\varphi$.  It follows from Theorem~\ref{innerfaithful} that the
action of $H$ on $\pi$ is not inner-faithful.  Conversely, suppose
that $\pi^* \not\cong \pi$.  Then by \cite[Theorem 10.1]{NR}, for any
one-dimensional representation $\varphi$ appearing in
$\pi \otimes \pi$, one has $\varphi \otimes \pi \cong \pi^*$.
Further, every one-dimensional representation of $H$ appears in either
$\pi\otimes\pi$ or $\pi\otimes\pi^*$, justifying the claim.

Next, suppose that $\pi \cong \pi^*$, and that $\varphi$ is a
one-dimensional representation.  If $\varphi$ appears in
$\pi \otimes \pi$, then clearly the action of $H$ on
$\pi \oplus \varphi$ is not inner-faithful, as the action of $H$ on
$\pi$ is not inner-faithful.  Conversely, suppose that $\varphi$ does
not appear in $\pi \otimes \pi$.  Then \cite[Theorem 10.2]{NR} shows
that the one-dimensional representations that appear in
$\pi \otimes \pi$ form a subgroup of $\bG(H^*)$ and are precisely those
representations $\psi$ that satisfy $\psi\otimes\pi = \pi$.  It
follows that if $\varphi$ does not appear in $\pi \otimes \pi$, then
$\varphi\otimes\pi \cong \pi^*$ and that all of $\bG(H^*)$ is generated
by the aforementioned subgroup and $\varphi$.  It follows that the
action of $H$ on $\pi \oplus \varphi$ is inner-faithful.
\end{proof}

\subsection{Dual Cocycle Twists}
The Hopf algebras \kashina 1--16 are not twists of each other, but several are twists of other algebras. Here we recall facts about twists using dual cocyles.

Let $H$ be a Hopf algebra. An invertible element $\Omega = \sum \Omega^{(1)} \otimes \Omega^{(2)} \in H \otimes H$ is called a \emph{dual $2$-cocycle} if
\[ [(\Delta \otimes \Id)(\Omega)](\Omega \otimes 1) = [(\Id \otimes \Delta )(\Omega)](1 \otimes \Omega).
\]
The dual 2-cocycle $\Omega$ is said to be \emph{normal} if also
\[ (\Id \otimes \epsilon)(\Omega) = (\epsilon \otimes \Id)(\Omega) = 1.
\]
This is the formal dual to the definition of a normal cocycle.

Given a Hopf algebra $H$ and a dual cocycle $\Omega \in H \otimes H$, we can form the \emph{cotwist} of $H$, denoted $H^{\Omega}$ \cite[Definition 2.5]{MTwist}. As an algebra, $H^{\Omega} = H$ but $H^{\Omega}$ has comultiplication $\Delta^{\Omega}(h) = \Omega\inv (\Delta h) \Omega$. There is also an antipode which makes $H^{\Omega}$ a Hopf algebra.

If $A$ is a left $H$-module algebra, define the twisted algebra $A_{\Omega}$ to be $A$ as a vector space but with multiplication:
\[ a \cdot_{\Omega} b = \sum (\Omega^{(1)} \cdot a) (\Omega^{(2)} \cdot b)
\]
for all $a, b \in A$. Then $A_{\Omega}$ is a left $H^{\Omega}$-module algebra using the same action of $H$ on the vector space $A$ \cite[Definition 2.6]{MTwist}. The statements in the following lemma  were used in \cite{CKWZ1} though not all stated explicitly, and so a brief argument is given.

\begin{lemma}\label{lem.cocycle} Let $H$ be a Hopf algebra, $\Omega = \sum \Omega^1 \otimes \Omega^2 \in H \otimes H$ be a normal dual cocycle, and $A$ be a left $H$-module algebra. Then the following statements hold.
\begin{enumerate}
\item $(A_{\Omega})_{\Omega \inv} = A$.
\item $(H^{\Omega})^{\Omega\inv} \cong H$.
\item $A$ is AS regular if and only if $A_{\Omega}$ is AS regular.
\item $\gldim A = \gldim A_{\Omega}$ and $\GKdim A =  \GKdim A_{\Omega}$.
\item $(A_{\Omega})^{H^{\Omega}} = A^H$.
\item If $A$ is an inner-faithful $H$-module, then $A_{\Omega}$ is an inner-faithful $H^{\Omega}$-module.
\end{enumerate}
\end{lemma}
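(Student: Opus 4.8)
The plan is to dispatch parts (1) and (2) by a short direct computation and to derive parts (3)--(6) from the monoidal equivalence between $H$-modules and $H^\Omega$-modules induced by $\Omega$. First I would record the standard fact that if $\Omega$ is a normal dual $2$-cocycle for $H$, then $\Omega\inv$ is a normal dual $2$-cocycle for $H^\Omega$: one verifies the cocycle identity for $\Omega\inv$ relative to $\Delta^\Omega$ by substituting $\Delta^\Omega = \Omega\inv(\Delta -)\Omega$ and invoking the identity for $\Omega$ relative to $\Delta$, while normality of $\Omega\inv$ follows from that of $\Omega$ since $\Id\otimes\epsilon$ and $\epsilon\otimes\Id$ are algebra maps $H\otimes H\to H$. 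Granting this, (1) and (2) reduce to the identities $\Omega\,\Omega\inv=\Omega\inv\,\Omega=1$ in $H\otimes H$. For (2), the comultiplication of $(H^\Omega)^{\Omega\inv}$ sends $h$ to $\Omega\bigl(\Omega\inv(\Delta h)\Omega\bigr)\Omega\inv=\Delta h$, the algebra structure is unchanged throughout, and the doubly twisted antipode reverts in the same way, so $(H^\Omega)^{\Omega\inv}\cong H$. For (1), since the $H$-action on $A$ is an action, the product of $(A_\Omega)_{\Omega\inv}$ sends $a\otimes b$ to $\sum\bigl((\Omega^{(1)}(\Omega\inv)^{(1)})\cdot a\bigr)\bigl((\Omega^{(2)}(\Omega\inv)^{(2)})\cdot b\bigr)=\sum(1\cdot a)(1\cdot b)=ab$, because $\Omega\,\Omega\inv=1\otimes 1$.

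For (3)--(6) I would work with the strong monoidal functor $\Phi$ from the category ${}_H\mathcal{M}$ of $H$-modules, with tensor product formed via $\Delta$, to the category ${}_{H^\Omega}\mathcal{M}$ of $H^\Omega$-modules, with tensor product formed via $\Delta^\Omega$. Since $H^\Omega=H$ as algebras, $\Phi$ is the identity on underlying vector spaces and on morphisms; its monoidal structure $J_{M,N}\colon \Phi M\otimes\Phi N\to\Phi(M\otimes N)$ is the action of $\Omega$ on $M\otimes N$. That $J$ is $H^\Omega$-linear is immediate from $\Delta^\Omega=\Omega\inv(\Delta-)\Omega$, and the coherence hexagon for $J$ is exactly the dual $2$-cocycle identity for $\Omega$; since $\Omega$ is invertible, $\Phi$ is strong monoidal, and being the identity underneath it is an exact, grading-preserving equivalence of abelian categories. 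By the construction of $A_\Omega$, $\Phi$ carries the algebra object $A$ to the algebra object $A_\Omega$ (with the same underlying $H$-module), carries the trivial module $\kk$ to $\kk$, and carries free modules to free modules. Part (5) is then immediate: $(A_\Omega)^{H^\Omega}$ and $A^H$ are the same subspace of the common vector space $A$ since neither the counit nor the $H$-action changes, and for $a,b\in A^H$ normality of $\Omega$ gives $a\cdot_\Omega b=(\epsilon\otimes\epsilon)(\Omega)\,ab=ab$, so the two subalgebras coincide.

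For part (6), strong monoidality gives $\Phi(A^{\otimes n})\cong(\Phi A)^{\otimes n}$ for every $n$, which says precisely that $A^{\otimes n}$ with the $\Delta$-diagonal action and $A_\Omega^{\otimes n}$ with the $\Delta^\Omega$-diagonal action are isomorphic as modules over the common algebra $H=H^\Omega$; hence they have the same simple summands, and Theorem~\ref{innerfaithful}(3) transfers inner-faithfulness from $A$ over $H$ to $A_\Omega$ over $H^\Omega$. For parts (3) and (4), note that since $H$ is semisimple the minimal graded free resolution $P_\bullet\twoheadrightarrow\kk$ of the trivial left $A$-module, with $P_i=A\otimes W_i$, is canonically $H$-equivariant, so $\Phi$ carries it to the minimal graded free resolution $\Phi P_\bullet\twoheadrightarrow\kk$ of $\kk$ over $A_\Omega$, with $\Phi P_i=A_\Omega\otimes W_i$. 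This already gives $\gldim A=\gldim A_\Omega$, hence finite injective dimension on both sides (bounded by the global dimension). Applying $\operatorname{Hom}_A(-,A)$ to $P_\bullet$ and using that $\Phi$, being strong monoidal, identifies the complex $\operatorname{Hom}_{A_\Omega}(\Phi P_\bullet,A_\Omega)$ with the $\Phi$-image of $\operatorname{Hom}_A(P_\bullet,A)$ --- together with exactness and grading-invariance of $\Phi$ --- then yields $\operatorname{Ext}^i_{A_\Omega}(\kk,A_\Omega)\cong\operatorname{Ext}^i_A(\kk,A)$ as graded vector spaces, so the AS-Gorenstein condition transfers. Finally $A$ and $A_\Omega$ are literally equal as graded vector spaces, hence share a Hilbert series, so $\GKdim A=\GKdim A_\Omega$; running the argument with $\Omega\inv$ in place of $\Omega$, via (1), yields the converses.

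The hard part will be bookkeeping rather than ideas: pinning down, under this paper's conventions, whether $J$ is the action of $\Omega$ or of $\Omega\inv$ so that it is genuinely $H^\Omega$-linear and $\Phi(A)$ is genuinely $A_\Omega$, and keeping the three twists --- of the multiplication of $A$, of the comultiplication of $H$, and of the antipode --- mutually consistent throughout. The one genuinely delicate verification is the identification used for $\operatorname{Ext}^\bullet(\kk,A)$, namely that $\Phi$ transports the dual complex $\operatorname{Hom}_A(P_\bullet,A)$ to $\operatorname{Hom}_{A_\Omega}(\Phi P_\bullet,A_\Omega)$; this rests on strong monoidality of $\Phi$ and, for equivariance of the resolution, on semisimplicity of $H$ together with the connected graded hypothesis. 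Alternatively, since every $A$ considered in this paper is a quadratic AS regular algebra of dimension $2$ or $3$ and hence Koszul, one can instead deduce (3) and (4) by observing that the quadratic dual of a dual cocycle twist is again a dual cocycle twist of the quadratic dual and that the Frobenius property is preserved by such twists; and much of (3)--(6) is in essence already implicit in \cite{CKWZ1}.
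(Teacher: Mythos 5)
Your treatment of (1) and (5) coincides with the paper's: (1) is the same one-line computation from $\Omega\,\Omega\inv = 1\otimes 1$, and (5) is the same use of normality, $a\cdot_\Omega b = \sum(\Omega^{(1)}.a)(\epsilon(\Omega^{(2)}).b) = ab$ for $b\in A^H$. Where you diverge is in (2)--(4) and (6): the paper derives (2)--(4) from part (1) together with a citation of \cite[Lemma 4.12]{CKWZ1}, and proves (6) by observing that $H=H^{\Omega}$ as algebras, that the action on the underlying vector space of $A$ is unchanged, and that the Hopf ideals of $H$ and $H^{\Omega}$ coincide; you instead construct the strong monoidal equivalence $\Phi$ and transport the minimal free resolution of $\kk$ and its dual complex across it. Your route is more self-contained --- it explains why the cited lemma is true rather than invoking it --- at the cost of genuine bookkeeping (equivariance of the minimal resolution, the identification of $\operatorname{Hom}_{A_{\Omega}}(\Phi P_\bullet, A_{\Omega})$ with the image of $\operatorname{Hom}_A(P_\bullet,A)$), and your direct verification that $\Omega\inv$ is a normal dual cocycle for $H^{\Omega}$ is a worthwhile addition that the paper leaves implicit. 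One caveat: the lemma is stated for an \emph{arbitrary} Hopf algebra $H$, and your arguments for (6) (via Theorem~\ref{innerfaithful}(3)) and for equivariance of the resolution both invoke finite-dimensionality and semisimplicity of $H$; this is harmless for every application in the paper, but the paper's Hopf-ideal argument for (6) is both shorter and fully general --- a two-sided ideal $I$ with $\Delta(I)\subseteq I\otimes H + H\otimes I$ still satisfies $\Delta^{\Omega}(I)=\Omega\inv\Delta(I)\Omega\subseteq I\otimes H+H\otimes I$ because the latter is an ideal of $H\otimes H$, so the annihilating Hopf ideals match up with no representation theory needed.
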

\begin{proof}
Part (1) holds because the product $*$ in $(A_{\Omega})_{\Omega \inv}$ is given by
\[ a * b = \sum ((\Omega \inv)^{(1)} . a ) \cdot_{\Omega} ((\Omega \inv)^{(2)} . b) = \sum (\Omega^{(1)}(\Omega \inv)^{(1)} . a)(\Omega^{(2)}(\Omega \inv)^{(2)} . b) = ab.
\]

Parts (2), (3), and (4) follow from (1) and \cite[Lemma 4.12]{CKWZ1}.

Since the action of $H^{\Omega}$ on $A_{\Omega}$ is given by the same action of $H$ on the vector space $A$, we can identify the elements of $(A_{\Omega})^{H^{\Omega}}$ with those of $A^H$, we need only check that the two invariant rings have the same multiplication. But for any $a \in A$ and $b \in A^H$, we have
\[ a \cdot_{\Omega} b = \sum (\Omega^{(1)} . a) (\Omega^{(2)} . b) = \sum (\Omega^{(1)} . a) (\epsilon(\Omega^{(2)}) . b) =  \sum (\Omega^{(1)} \epsilon(\Omega^{(2)}) . a)b = ab
\]
since $\Omega$ is a normal dual cocyle. In particular, the above holds for any $a,b \in A^H$. Hence, (5) holds.

Part (6) is immediate because as algebras $H = H^{\Omega}$ and the action of $H^{\Omega}$ on $(A_{\Omega})_1$ is the same as the action of $H$ on the vector space $A_1$; furthermore, the Hopf ideals of $H$ are the same as those in $H^\Omega$. 
\end{proof}

Hence, a cotwist $H^{\Omega}$ acts on an AS regular algebra $A$ if and only if $H$ acts on the AS regular algebra $A_{\Omega\inv}$. The invariant ring $A^{H^\Omega}$ is AS regular if and only if $(A_{\Omega\inv})^H$ is AS regular.

\section{\texorpdfstring{$\bG(H) = C_2 \times C_2 \times C_2$}{G(H) = C2 x C2 x C2}} \label{c2xc2xc2}

Throughout this section, let $H$ be a semisimple Hopf algebra of dimension sixteen with
$\bG(H) = C_2 \times C_2\times C_2=\langle x\rangle\times\langle y\rangle\times\langle z\rangle$.
By Kashina's classification, there are four such algebras (see \cite[pp. 633--634]{kashina}).  We will call these algebras \kashina{1} through \kashina{4}.
They are all generated by $x,y,z$ and $w$ subject to relations:
$$wx = yw \quad wy = xw \quad wz = zw,$$
and an additional relation on $w^2$ of the form
$w^2 = f(x,y,z)$. In each case
$\epsilon(w) = 1$.  The general formula for the coproduct of $w$ is given in \cite[pp. 629--633]{kashina}.
To treat the four Hopf algebras in this case efficiently, we will consider first \twokashinas{2}{4}, and then \twokashinas{1}{3}.
Even though we treat those Hopf algebras with the same coproduct in separate subsections, there are
some similarities in the results.

\subsection{\twokashinas{2}{4}}

For the algebras  \twokashinas{2}{4} the coproduct of $w$ is given by
\begin{equation} \label{eq:K2K4coprod}
\Delta(w) = \frac{1}{2}(1 \otimes 1 + 1 \otimes xy + z \otimes 1 - z \otimes xy)(w \otimes w).
\end{equation}
For \kashina{2} (denoted $H_{d:1,1}$ in \cite{kashina}), the relation on $w^2$ is $w^2 = 1$, and for
\kashina{4} (denoted $H_{d: 1,-1} \cong (H_{b:1})^*$ in \cite{kashina}), one has $w^2 = z$.

To aid us in computing decompositions of tensor products, as well as actions on $H$-module algebras,
we have the following lemma which can be verified directly.

\begin{lemma} \label{lem:K2K4trick}
Let $H$ be \kashina{2} or \#4.
Let $A$ and $B$ be $H$-representations, and let $f \in A$, $g \in B$.  Then one has
\[
w.(f\otimes g)=\begin{cases}
w.f\otimes w.g&\text{if } zw.f=w.f \\
w.f\otimes xyw.g&\text{if }~zw.f=-w.f.
\end{cases}
\]
\end{lemma}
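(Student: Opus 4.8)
The plan is to verify the formula by a direct computation using the explicit coproduct \eqref{eq:K2K4coprod}. The action of $w$ on $f \otimes g$ is by definition $\Delta(w).(f\otimes g)$, so I would first expand
\[
\Delta(w) = \tfrac12\bigl(1\otimes 1 + 1\otimes xy + z\otimes 1 - z\otimes xy\bigr)(w\otimes w)
= \tfrac12\bigl(w\otimes w + w\otimes xyw + zw\otimes w - zw\otimes xyw\bigr),
\]
using that $x,y,z$ are grouplike and commute with each other (and recalling $wx=yw$, etc., is not needed here since we multiply $(w\otimes w)$ on the right). Applying this to $f\otimes g$ gives
\[
w.(f\otimes g) = \tfrac12\bigl(w.f\otimes w.g + w.f\otimes xyw.g + zw.f\otimes w.g - zw.f\otimes xyw.g\bigr).
\]

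Next I would split into the two cases according to the hypothesis on $zw.f$. If $zw.f = w.f$, substitute this into the third and fourth terms: the expression becomes $\tfrac12(w.f\otimes w.g + w.f\otimes xyw.g + w.f\otimes w.g - w.f\otimes xyw.g) = w.f\otimes w.g$, which is the first case of the claimed formula. If instead $zw.f = -w.f$, the third and fourth terms become $-w.f\otimes w.g$ and $+w.f\otimes xyw.g$, so the sum collapses to $\tfrac12(w.f\otimes w.g + w.f\otimes xyw.g - w.f\otimes w.g + w.f\otimes xyw.g) = w.f\otimes xyw.g$, giving the second case.

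The only subtlety — and the one place worth a sentence in the writeup — is that the hypothesis $zw.f = \pm w.f$ covers \emph{all} relevant cases: since $z$ is grouplike of order $2$ and acts on the $H$-module $A$, the operator by which $z$ acts on the vector $w.f$ has eigenvalues in $\{\pm 1\}$, but $w.f$ need not itself be an eigenvector of $z$ in general. However, in every situation where Lemma~\ref{lem:K2K4trick} is invoked, $f$ will be a basis vector of an irreducible (or a sum with a one-dimensional) $H$-module on which $z$ acts diagonally, so $w.f$ is indeed a $z$-eigenvector; thus no obstacle arises. Hence the lemma follows by the displayed algebra, "verified directly" as stated.
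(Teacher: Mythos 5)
Your proposal is correct and is precisely the direct verification the paper intends (the paper states the lemma "can be verified directly" and gives no further proof): expand $\Delta(w)$ from \eqref{eq:K2K4coprod}, apply it to $f\otimes g$, and split into the two eigenvalue cases. Your added remark that $w.f$ is a $z$-eigenvector in all applications is a sensible clarification but not needed for the lemma as stated, since the hypotheses already assume one of the two cases holds.
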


\begin{notation} \label{not:K1toK4reps}
Each of these algebras has two two-dimensional irreducible representations and eight one-dimensional
representations.  The two-dimensional irreducible representations for \kashina{2} are $\pi_1(u,v)$ and $\pi_2(u,v)$ defined by
\[
\pi_1(x)=\begin{bmatrix}1&0\\0&-1\end{bmatrix}\quad\pi_1(y)=\begin{bmatrix}-1&0\\0&1\end{bmatrix}\quad\pi_1(z)=\begin{bmatrix}1&0\\0&1\end{bmatrix}\quad\pi_1(w)=\begin{bmatrix}0&1\\1&0\end{bmatrix}
\]
and 
\[
\pi_2(x)=\begin{bmatrix}1&0\\0&-1\end{bmatrix}\quad\pi_2(y)=\begin{bmatrix}-1&0\\0&1\end{bmatrix}\quad\pi_2(z)=\begin{bmatrix}-1&0\\0&-1\end{bmatrix}\quad\pi_2(w)=\begin{bmatrix}0&1\\1&0\end{bmatrix}.
\]
The one-dimensional representations of \kashina{2} are $T_{(-1)^a,(-1)^a,(-1)^b,(-1)^c}(t)$ for $a,b,c \in \{0,1\}$ where the subscripts indicates the action of $x$, $y$, $z$, and $w$, respectively, on the basis element $t$. Throughout the paper, we will use similar notation for one-dimensional representations.

For \kashina{4}, the two-dimensional irreducible representations are $\pi_1(u,v)$ and $\pi_2(u,v)$ defined by
\[
\pi_1(x)=\begin{bmatrix}1&0\\0&-1\end{bmatrix}\quad\pi_1(y)=\begin{bmatrix}-1&0\\0&1\end{bmatrix}\quad\pi_1(z)=\begin{bmatrix}1&0\\0&1\end{bmatrix}\quad\pi_1(w)=\begin{bmatrix}0&1\\1&0\end{bmatrix}
\]
and 
\[
\pi_2(x)=\begin{bmatrix}1&0\\0&-1\end{bmatrix}\quad\pi_2(y)=\begin{bmatrix}-1&0\\0&1\end{bmatrix}\quad\pi_2(z)=\begin{bmatrix}-1&0\\0&-1\end{bmatrix}\quad\pi_2(w)=\begin{bmatrix}0&\i\\\i&0\end{bmatrix}.
\]
while the one-dimensional representations are $T_{(-1)^a,(-1)^a,(-1)^b,(-1)^c \i^b}(t)$ for
{$a,b,c \in \{0,1\}$} where, again, the subscripts indicate the actions of $x$, $y$, $z$, and $w$, respectively, on the basis element $t$.
\end{notation}

\begin{remark} \label{rem:K1K3sameK2K4}
The two-dimensional irreducible representations of \twokashinas{1}{3} are the same
as those of \twokashinas{2}{4}, respectively. The one-dimensional representations of \twokashinas{1}{3} and \twokashinas{2}{4} differ only in the action of $w$. The following theorem will apply to \twokashinas{1}{3},
as well as \twokashinas{2}{4}, since only the actions of $x$, $y$ and $z$ are used in the calculation.
\end{remark}

We now record the following lemma, which identifies the inner-faithful representation of smallest
dimension for the Hopf algebras in this section.

\begin{lemma} \label{C2xC2xC2inner}
  Let $H$ be a semisimple Hopf algebra of dimension sixteen with
  $\bG(H)=C_2\times C_2\times C_2$.  Let $\pi$ be an irreducible
  two-dimensional representation of $H$, and let
  $\varphi = T_{(-1)^a,(-1)^a,(-1)^b,(-1)^c\i^b}$ be a one-dimensional
  representation of $H$.  Then the action of $H$ on $\pi$ is not
  inner-faithful and the action of $H$ on $\pi \oplus \varphi$ is
  inner-faithful if and only if $b = 1$.
\end{lemma}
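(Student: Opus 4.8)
The plan is to deduce the statement from Theorem~\ref{dim2and3InnerFaithful}, which applies because each of the four Hopf algebras with $\bG(H) = C_2\times C_2\times C_2$ has exactly eight one-dimensional representations, so that $|\bG(H^*)| = 8$. First I would dispatch the claim about $\pi$ alone. By Notation~\ref{not:K1toK4reps} and Remark~\ref{rem:K1K3sameK2K4}, the two irreducible two-dimensional representations are $\pi_1$, on which $z$ acts trivially, and $\pi_2$, on which $xyz$ acts trivially (one checks directly that $\pi_2(x)\pi_2(y)\pi_2(z)$ is the identity). So in either case some nontrivial grouplike element $g\in\bG(H)$ acts trivially on $\pi$; then $1-g$ annihilates $\pi$, and by Lemma~\ref{HopfIdeal} the two-sided ideal it generates is a nonzero Hopf ideal, so $\pi$ is not inner-faithful. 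Feeding this into Theorem~\ref{dim2and3InnerFaithful}(1) yields $\pi^*\cong\pi$, which I will use below.

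Next I would prove that $b=0$ forces non-inner-faithfulness. Writing $\varphi = T_{(-1)^a,(-1)^a,(-1)^b,(-1)^c\i^b}$, one has $\varphi(z) = (-1)^b$ and $\varphi(xyz) = \varphi(x)\varphi(y)\varphi(z) = (-1)^{2a+b} = (-1)^b$. Hence if $b=0$, the element $g$ from the previous paragraph ($g = z$ when $\pi = \pi_1$, $g = xyz$ when $\pi = \pi_2$) acts trivially on both summands of $\pi\oplus\varphi$, so exactly as above $1-g$ generates a nonzero Hopf ideal annihilating $\pi\oplus\varphi$; thus the action is not inner-faithful.

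For the converse ($b=1$ implies inner-faithful) I would identify $\pi\otimes\pi$ explicitly. Since $z$ acts on $\pi\in\{\pi_1,\pi_2\}$ as a scalar ($+1$ or $-1$), it acts as the identity on $\pi\otimes\pi$, hence trivially on every irreducible constituent. By the computations of Kashina recalled in the proof of Theorem~\ref{dim2and3InnerFaithful}, $\pi\otimes\pi$ is a multiplicity-free sum of four one-dimensional representations; since exactly four of the eight one-dimensional representations of $H$ --- those with $b=0$ --- have $z$ acting trivially, $\pi\otimes\pi$ must be the direct sum of precisely these four. When $b=1$ we have $\varphi(z) = -1$, so $\varphi$ is not among them, i.e.\ $\varphi$ is not a direct summand of $\pi\otimes\pi$; Theorem~\ref{dim2and3InnerFaithful}(2) then shows that $\pi\oplus\varphi$ is inner-faithful. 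Combining this with the previous paragraph gives the equivalence.

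I do not anticipate a genuine obstacle: granted Theorem~\ref{dim2and3InnerFaithful} and the multiplicity-free structure of $\pi^{\otimes 2}$, the rest is bookkeeping. The only point requiring a little care is handling $\pi_1$ and $\pi_2$ uniformly and pinning down $\pi\otimes\pi$, which works because the only relevant feature of $\pi(z)$ is that it is a scalar of order dividing $2$.
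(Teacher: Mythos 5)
Your proposal is correct and follows the same route as the paper: the paper's proof of this lemma simply invokes Theorem~\ref{dim2and3InnerFaithful} together with Kashina's calculations, and you have supplied exactly the bookkeeping that reduction requires (a grouplike killing $\pi$, hence $\pi^*\cong\pi$; the identification of $\pi\otimes\pi$ as the four one-dimensional representations with trivial $z$-action, matching the explicit decomposition in \eqref{eq:K2pidecomp}). No issues.
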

\begin{proof}
These assertions follow from Theorem~\ref{dim2and3InnerFaithful}, and the calculations
appearing in \cite[5.1 and 5.3]{kashina}.
\end{proof}

By the above lemma, in order for $H$ to act inner-faithfully on a three-generated algebra $A$, the action of $H$ on the degree one component $A_1$ must be of the form $\pi_j \oplus T_{(-1)^a,(-1)^a,-1,\gamma}$ where $\pi_j$
is one of the irreducible two-dimensional representations given above, and $\gamma = (-1)^c$
for Kashina \#2 and $\gamma = (-1)^c\i$ for Kashina \#4.  Furthermore, the defining
relations must be representations of the Hopf algebra in question.  Since we are considering
only quadratic AS regular algebras, we now decompose $A_1 \otimes A_1$ as a sum of irreducible $H$-representations.

We first treat the case of \kashina{2}.  A computation shows that for $i=1,2$ one has
the decomposition
\begin{equation} \label{eq:K2pidecomp}
\begin{aligned}[b]
\pi_i(u,v)^2 =&~T_{1,1,1,1}(u^2+(-1)^{i+1}v^2)\oplus T_{1,1,1,-1}(u^2-(-1)^{i+1}v^2) \oplus \\
              &~T_{-1,-1,1,1}(uv+(-1)^{i+1}vu)\oplus T_{-1,-1,1,-1}(uv-(-1)^{i+1}vu).
\end{aligned}
\end{equation}
The action of \kashina{2} by the representation $$\pi_i(u,v)\otimes T_{(-1)^a,(-1)^a,-1,(-1)^c}(t) \oplus
T_{(-1)^a,(-1)^a,-1,(-1)^c}(t)\otimes\pi_i(u,v)$$ 
on basis elements is given in the following table:
\begin{equation}\label{eq:K2actionTable}
\begin{tabular}{|c|c|c|c|c|}
\hline
& $ut$ & $vt$ & $tu$ & $tv$ \\
\hline $x$ & $(-1)^aut$& $(-1)^{a+1}vt$ & $(-1)^atu$& $(-1)^{a+1}tv$  \\ \hline
$y$ & $(-1)^{a+1}ut$ & $(-1)^avt$ & $(-1)^{a+1}tu$ & $(-1)^atv$\\ \hline
$z$ & $(-1)^iut$ & $(-1)^ivt$ & $(-1)^itu$ & $(-1)^itv$\\ \hline
$w$ & $(-1)^cvt$ & $(-1)^cut$ & $(-1)^{c+1}tv$ & $(-1)^{c+1}tu$\\ \hline
\end{tabular}
\end{equation}

Therefore if $\{i,j\} = \{1,2\}$ with $i \neq j$ then
\begin{eqnarray*}
\pi_i(u,v)\otimes T_{(-1)^a,(-1)^a,-1,(-1)^c}(t) & = &
\begin{cases} \pi_j(ut,(-1)^cvt)&a=0\\\pi_j(vt,(-1)^cut)&a=1\end{cases} \\
T_{(-1)^a,(-1)^a,-1,(-1)^c}(t)\otimes \pi_i(u,v) & = & \begin{cases} \pi_j(tu,(-1)^{c+1}tv)&a=0\\\pi_j(tv,(-1)^{c+1}tu)&a=1.\end{cases}
\end{eqnarray*}
It follows that \kashina{2} acts inner-faithfully on the algebras $A = \frac{\kk \langle u, v \rangle}{(r)}[t;\sigma]$,
where $\sigma = \begin{bmatrix} \alpha & 0 \\ 0 & -\alpha \end{bmatrix}$ for a nonzero scalar $\alpha$,
and $r = u^2 \pm v^2$ or $uv \pm vu$, where $A_1$ is the representation $\pi_i(u,v) \oplus T_{(-1)^a,(-1)^a,-1,(-1)^c}(t)$.

The case of \kashina{4} is similar.  Indeed, the decomposition of $\pi_i^2$ given in \eqref{eq:K2pidecomp} remains
the same, except the $(-1)^{i+1}$ factors are removed from the second summand of each module generator.  Further,
one may obtain the table giving the action of Kashina \#4 on
$$\pi_i(u,v)\otimes T_{(-1)^a,(-1)^a,-1,(-1)^c\i}(t) \oplus T_{(-1)^a,(-1)^a,-1,(-1)^c\i}(t)\otimes\pi_i(u,v)$$
from \eqref{eq:K2actionTable} by scaling the $w$ row by $\i^i$.
Using this modified table, one obtains the following decompositions, where $\{i,j\} = \{1,2\}$:
\begin{eqnarray*}
\pi_i(u,v)\otimes T_{(-1)^a,(-1)^a,-1,(-1)^c\i}(t) & = & \begin{cases} \pi_j(ut,(-1)^c\i^i vt)&a=0\\\pi_j(vt,(-1)^{c}\i^i ut)&a=1\end{cases} \\
T_{(-1)^a,(-1)^a,-1,(-1)^c\i}(t)\otimes \pi_i(u,v) & = & \begin{cases} \pi_j(tu,(-1)^{c+1}\i^i tv)&a=0\\\pi_j(tv,(-1)^{c+1}\i^i tu)&a=1.\\\end{cases}
\end{eqnarray*}
It follows that \kashina{4} acts on the same three-generated algebras as \kashina{2}, and we summarize these results in
the following theorem.
\begin{theorem}
\label{innerfaithful24}
\kashina{2} and \kashina{4} act inner-faithfully on the algebras
$A = \frac{\kk \langle u, v \rangle}{(r)}[t;\sigma]$,
where $\sigma = \begin{bmatrix} \alpha & 0 \\ 0 & -\alpha \end{bmatrix}$ for a nonzero scalar $\alpha$,
and $r = u^2 \pm v^2$ or $uv \pm vu$, where $A_1$ is the representation $\pi_i(u,v) \oplus T(t)$
with $i \in \{1,2\}$ and $z.t=-t$.
\end{theorem}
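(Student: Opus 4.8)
The plan is to verify two things: that each algebra $A$ in the statement genuinely carries an $H$-module algebra structure extending the given action on $A_1$ (for $H$ either \kashina{2} or \kashina{4}), and that this action is inner-faithful. Both reduce to the decompositions of $A_1\otimes A_1$ recorded in \eqref{eq:K2pidecomp} and in the displays of $\pi_i\otimes T$ and $T\otimes\pi_i$ appearing just before the theorem, so most of the work has already been carried out in this subsection.

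For the module algebra structure, write $A_1=\pi_i(u,v)\oplus T(t)$ with $z.t=-t$ and split
\[
A_1\otimes A_1=(\pi_i\otimes\pi_i)\oplus(\pi_i\otimes T)\oplus(T\otimes\pi_i)\oplus(T\otimes T)
\]
as $H$-modules. I would show that the defining relations of $A=\frac{\kk\langle u,v\rangle}{(r)}[t;\sigma]$ --- the quadratic element $r$ together with the Ore relations $tu-\alpha ut$ and $tv+\alpha vt$ --- span an $H$-submodule $W$ of $A_1\otimes A_1$. For $r$ this is immediate from \eqref{eq:K2pidecomp}, which exhibits each of $u^2+v^2$, $u^2-v^2$, $uv+vu$, $uv-vu$ as spanning a one-dimensional $H$-subrepresentation of $\pi_i\otimes\pi_i$. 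For the Ore relations, the displays before the theorem identify both $\pi_i\otimes T$ and $T\otimes\pi_i$ with the remaining two-dimensional irreducible $\pi_j$, where $\{i,j\}=\{1,2\}$, together with explicit bases; comparing those bases (equivalently, applying Schur's lemma to the resulting isomorphism $\pi_i\otimes T\xrightarrow{\sim}T\otimes\pi_i$) shows that $\kk(tu-\alpha ut)\oplus\kk(tv+\alpha vt)$ coincides with the graph of an $H$-module isomorphism $\pi_i\otimes T\xrightarrow{\sim}T\otimes\pi_i$ and is therefore an $H$-submodule isomorphic to $\pi_j$. It is precisely the diagonal shape $\sigma=\begin{bmatrix}\alpha&0\\0&-\alpha\end{bmatrix}$ that makes this subspace $H$-stable, which is where the form of $\sigma$ in the statement comes from. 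Since the two-sided ideal of the tensor algebra $T(A_1)$ generated by the $H$-submodule $W$ is again stable under the $H$-action, the quotient $A=T(A_1)/(W)$ is an $H$-module algebra; for \kashina{4} the argument is identical once the action of $w$ is scaled by $\i^i$ and the corresponding decompositions for \kashina{4} are used.

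For inner-faithfulness, note that as an $H$-module $A_1=\pi_i\oplus T_{(-1)^a,(-1)^a,-1,\gamma}$ with $\gamma=(-1)^c$ for \kashina{2} and $\gamma=(-1)^c\i$ for \kashina{4}; in the notation of Lemma~\ref{C2xC2xC2inner} this is $\pi\oplus\varphi$ with $b=1$, so that lemma shows $A_1$ is an inner-faithful $H$-module. Since the action of $H$ on $A$ preserves the grading, $A_1$ is an $H$-submodule of $A$, so any Hopf ideal $I$ of $H$ that annihilates $A$ also annihilates $A_1$, whence $I=0$; thus $A$ is an inner-faithful $H$-module.

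The only genuinely computational step is checking that $\kk(tu-\alpha ut)\oplus\kk(tv+\alpha vt)$ is $H$-stable, and I expect this is the point a reader would want written out; but it follows at once from the explicit bases $\pi_j(ut,(-1)^c vt)$ and $\pi_j(tu,(-1)^{c+1}tv)$ (and their analogues for $a=1$ and for \kashina{4}) displayed before the theorem, since an $H$-linear map between two copies of the irreducible module $\pi_j$ must be a scalar. So the real difficulty is bookkeeping --- keeping track of the four choices of $r$, the two values of $a$, the parameter $c$, and the $\i$-twist for \kashina{4} --- rather than anything conceptual. One may also note that each $A$ is AS regular of dimension three, being a graded Ore extension of the AS regular algebra $\kk\langle u,v\rangle/(r)$ (cf.\ Lemma~\ref{lem:u2v2Basis}), though this is not needed for the statement.
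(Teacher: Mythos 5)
Your proposal is correct and follows essentially the same route as the paper: the theorem is justified there by the decomposition \eqref{eq:K2pidecomp}, the table \eqref{eq:K2actionTable}, and the resulting identifications of $\pi_i\otimes T$ and $T\otimes\pi_i$ with $\pi_j$, together with Lemma~\ref{C2xC2xC2inner} for inner-faithfulness. Your only addition is to make explicit the Schur's lemma/graph argument showing that the span of the Ore relations is an $H$-submodule (which the paper leaves implicit in ``It follows that\dots''), and that elaboration is accurate.
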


\subsection{\twokashinas{1}{3}} 

For the algebras \twokashinas{1}{3}, the coproduct of $w$ is given by  
\begin{equation*} \label{eq:K1K3coprod}
\Delta(w) = \frac{1}{4} \left( \sum_{b,c,\alpha,\beta \in \{0,1\}} (-1)^{b\alpha + b \beta + c \beta} y^bz^c \otimes x^{\alpha} y^{\beta}\right)(w \otimes w).
\end{equation*}
For \kashina{1} (denoted $H_{d:-1,1}$ in \cite{kashina}), one has $w^2 = \frac{1}{2}(1 + x + y - xy)$, and for
\kashina{3} (denoted $H_{d:-1,-1} \cong (H_{c:\sigma_1})^*$ in \cite{kashina}), one has $w^2 = \frac{1}{2}(1 + x + y - xy)z$.

As in the previous case, we provide a lemma which aids us in the representation theory and invariant ring calculations to follow.
\begin{lemma} \label{lem:K1K3trick}
Let $H$ be \kashina{1} or  \#3.
Let $A$ and $B$ be $H$-representations, and let $f \in A$, $g \in B$.  Then one has
\[
w.(f\otimes g)=\begin{cases}
w.f\otimes w.g&\text{if } zw.f=w.f \text{ and } yw.f = w.f,\\
w.f\otimes xw.g&\text{if }zw.f=w.f \text{ and } yw.f = -w.f,\\
w.f\otimes xyw.g&\text{if } zw.f=-w.f \text{ and } yw.f = w.f,\\
w.f\otimes yw.g&\text{if } zw.f=-w.f \text{ and } yw.f = -w.f.\\
\end{cases}
\]
\end{lemma}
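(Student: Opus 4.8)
The plan is to verify the identity by a direct computation with $\Delta(w)$, in the same spirit as Lemma~\ref{lem:K2K4trick}. Since the action of $H$ on a tensor product of modules is $w.(f\otimes g) = \sum w_{(1)}.f \otimes w_{(2)}.g$, substituting the displayed formula for $\Delta(w)$ gives
\[
w.(f\otimes g) = \frac14 \sum_{b,c,\alpha,\beta\in\{0,1\}} (-1)^{b\alpha+b\beta+c\beta}\,(y^bz^c).(w.f)\otimes (x^\alpha y^\beta).(w.g).
\]
The first step is to use the hypothesis that $w.f$ is a common eigenvector for $y$ and $z$: write $z.(w.f) = (-1)^s\,w.f$ and $y.(w.f) = (-1)^r\, w.f$ with $r,s\in\{0,1\}$ (possible since $y^2=z^2=1$), so that $(y^bz^c).(w.f) = (-1)^{br+cs}\,w.f$.

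Pulling this scalar out of the tensor and regrouping the summation, the right-hand side becomes
\[
w.f \otimes \frac14\sum_{\alpha,\beta}\Bigl(\sum_{b}(-1)^{b(\alpha+\beta+r)}\Bigr)\Bigl(\sum_c (-1)^{c(\beta+s)}\Bigr) x^\alpha y^\beta w.g .
\]
Since $\sum_{b\in\{0,1\}}(-1)^{bm}$ equals $2$ when $m$ is even and $0$ when $m$ is odd, the two inner sums annihilate every term except the unique pair $(\alpha,\beta)$ with $\beta\equiv s$ and $\alpha\equiv r+s \pmod 2$, and for that pair the coefficient is $\tfrac14\cdot 2\cdot 2 = 1$. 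Hence $w.(f\otimes g) = w.f \otimes (x^{\alpha_0}y^{\beta_0}w).g$, where $(\alpha_0,\beta_0)\in\{0,1\}^2$ is determined by $\beta_0 \equiv s$ and $\alpha_0\equiv r+s$.

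The last step is to tabulate the four possibilities: $(s,r)=(0,0)$ yields $w.f\otimes w.g$; $(s,r)=(0,1)$ yields $w.f\otimes xw.g$; $(s,r)=(1,0)$ yields $w.f\otimes xyw.g$; and $(s,r)=(1,1)$ yields $w.f\otimes yw.g$ --- precisely the four lines of the statement. There is no real obstacle here; the only points requiring care are the arithmetic of the exponents modulo $2$ and checking that the normalization $\tfrac14$ exactly cancels the factor $4$ produced by the two character sums. Note that the computation uses only the actions of $x$, $y$, and $z$ on $w.f$ and never the value of $w^2$, so the argument applies verbatim to both \kashina 1 and \kashina 3 (cf.\ Remark~\ref{rem:K1K3sameK2K4}).
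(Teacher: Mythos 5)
Your computation is correct: expanding $\Delta(w)$, pulling the eigenvalues $(-1)^{br+cs}$ of $y^bz^c$ on $w.f$ into the sign, and collapsing the two character sums over $b$ and $c$ leaves exactly the single term $w.f\otimes x^{r+s}y^{s}w.g$, which reproduces all four cases. The paper states this lemma without proof as a direct verification, and your argument is precisely that verification, so the approaches coincide.
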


As mentioned in Remark~\ref{rem:K1K3sameK2K4}, the two-dimensional representations of
Kashinas \#1 and \#3 are the same as \#2 and \#4 respectively.  The one-dimensional
representations of Kashinas \#1 and \#3 are given by $T_{(-1)^a,(-1)^a,(-1)^b,\gamma}$,
where for Kashina \#1, $\gamma = (-1)^c\i^a$ and for Kashina \#3, $\gamma = (-1)^c\i^{a+b}$.
 Lemma~\ref{C2xC2xC2inner} applies to this case as well, hence if $H$ acts inner-faithfully preserving the grading on a three-generated algebra $A$, the degree one component $A_1 = \pi_i \oplus T_{(-1)^a,(-1)^a,-1,\gamma}$
for the appropriate choice of $\gamma$.

As before, we handle the decomposition calculations for each algebra separately.
For \kashina{1}, a computation shows that the representation $\pi_i^2$ decomposes as
\begin{align*}
\pi_i(u,v)^2&=T_{1,1,1,1}(u^2+(-1)^{i+1}v^2)\oplus T_{1,1,1,-1}(u^2-(-1)^{i+1}v^2)\\
&\oplus T_{-1,-1,1,-\i}(uv+(-1)^{i+1}\i vu)\oplus T_{-1,-1,1,\i}(uv-(-1)^{i+1}\i vu).
\end{align*}
The action by the representation
$$\pi_i(u,v) \otimes T_{(-1)^a,(-1)^a,-1,(-1)^c\i^a}(t) \oplus
T_{(-1)^a,(-1)^a,-1,(-1)^c\i^a}(t) \otimes \pi_j(u,v)$$
on basis elements is given by the following table:
\begin{equation} \label{lab:K1actionTable}
\begin{tabular}{|c|c|c|c|c|}
\hline
& $ut$ & $vt$ & $tu$ & $tv$ \\
\hline $x$ & $(-1)^aut$& $(-1)^{a+1}vt$ & $(-1)^a tu$ & $(-1)^{a+1} tv$ \\ \hline
$y$ & $(-1)^{a+1}ut$ & $(-1)^avt$ & $(-1)^{a+1} tu$ & $(-1)^a tv$ \\ \hline
$z$ & $(-1)^{i}ut$ & $(-1)^{i}vt$ & $(-1)^i tu$ & $(-1)^i tv$ \\ \hline
$w$ & $(-1)^c \i^a vt$ & $(-1)^{a+c} \i^a ut$ & $(-1)^{a+c+1} \i^a tv$ & $(-1)^{c+1} \i^a tu$ \\ \hline
\end{tabular}
\end{equation}
It follows that
\[
\pi_i(u,v)\otimes T_{(-1)^a,(-1)^a,-1,(-1)^c \i^a}(t)=
\begin{cases}
\pi_j(ut, (-1)^cvt)&a=0\\
\pi_j(vt,(-1)^{c+1}\i ut)&a=1
\end{cases}
\]
\[
 T_{(-1)^a,(-1)^a,-1,(-1)^c \i^a}(t) \otimes \pi_i(u,v) =
 \begin{cases} 
 \pi_j(tu,(-1)^{c+1}tv)&a=0\\
 \pi_j(tv, (-1)^{c+1}\i tu)&a=1
 \end{cases}
\]
where $\{i,j\} = \{1,2\}$.  Therefore \kashina{1} acts inner-faithfully
on the AS regular algebras $A = \frac{\kk\langle u, v \rangle}{(r)} [t;\sigma]$ where $r = uv \pm \i vu$
or $r = u^2 \pm v^2$, $\sigma = \begin{bmatrix} \alpha & 0 \\ 0 & (-1)^{a+1}\alpha\end{bmatrix}$ for some $\alpha \in \kk^\times$,
and $A_1$ is the representation $\pi_i(u,v) \oplus T_{(-1)^a,(-1)^a,-1,(-1)^c \i^a}(t)$ with
$i \in \{1,2\}$ and $a,c \in \{0,1\}$.

The case of \kashina{3} is similar.  Indeed, the decomposition of $\pi_i^2$ given in \eqref{eq:K2pidecomp} remains
the same, except the $(-1)^{i+1}$ factors are removed from the second summand of each module generator.  Further,
one may obtain the table giving the action of \kashina{3} by
$$\pi_i(u,v)\otimes T_{(-1)^a,(-1)^a,-1,(-1)^{c}\i^{a+1}}(t) \oplus 
T_{(-1)^a,(-1)^a,-1,(-1)^c\i^{a+1}}(t)\otimes\pi_i(u,v)$$
from \eqref{eq:K2actionTable} by scaling the $w$ row by $\i^i$.
Using this modified table, one obtains the following decompositions, where $\{i,j\} = \{1,2\}$:
\[
\pi_i(u,v)\otimes T_{(-1)^a,(-1)^a,-1,(-1)^c \i^{a+1}}(t)=
\begin{cases} 
\pi_j(ut, (-1)^{i+c+1}vt)&a=0\\
\pi_j(vt,(-1)^{i+c}\i ut)&a=1\\
\end{cases}
\]
\[
 T_{(-1)^a,(-1)^a,-1,(-1)^c \i^{a+1}}(t) \otimes \pi_i(u,v) =
 \begin{cases}
 \pi_j(tu,(-1)^{i+c}tv)&a=0\\
 \pi_j(tv, (-1)^{i+c}\i tu)&a=1\\
 \end{cases}.
\]
It follows that \kashina{1} acts on the same three-generated algebras as \kashina{3}, and we summarize these results in the following theorem.
\begin{theorem}
\label{innerfaithful13}
 \kashina{1} and \kashina{3} act inner-faithfully
on the AS regular algebras $A = \frac{\kk\langle u, v \rangle}{(r)} [t;\sigma]$ where $r = uv \pm \i vu$
or $r = u^2 \pm v^2$, $\sigma = \begin{bmatrix} \alpha & 0 \\ 0 & (-1)^{a+1}\alpha\end{bmatrix}$ for some $\alpha \in \kk^\times$,
and $A_1$ is the representation $\pi_i(u,v) \oplus T_{(-1)^a,(-1)^a,-1,\gamma}(t)$ with
$i \in \{1,2\},$ $a \in \{0,1\}$, and $\gamma=(-1)^c\i^a$ for \kashina{1} or $(-1)^c\i^{a+b}$ for \kashina{3}, with $c\in\{0,1\}$.
\end{theorem}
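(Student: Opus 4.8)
The plan is to verify, uniformly for \kashina{1} and \kashina{3}, three things for each algebra $A = \frac{\kk\langle u,v\rangle}{(r)}[t;\sigma]$ in the list: that $A$ is AS regular; that the prescribed action of $H$ on $A_1$ extends to a grading-preserving action of $H$ on all of $A$; and that this action is inner-faithful. The raw material for all three is the collection of tensor-product decompositions recorded immediately before the statement (the decomposition of $\pi_i(u,v)^2$ and the action table \eqref{lab:K1actionTable} for \kashina{1}, and their \kashina{3} analogues obtained by scaling the $w$-row by $\i^i$), and Lemma~\ref{lem:K1K3trick} is the tool that makes those decompositions computable despite the unwieldy coproduct of $w$.

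For AS regularity, set $B = \kk\langle u,v\rangle/(r)$. If $r = u^2 \pm v^2$, then $B$ is AS regular of GK dimension $2$ by Lemma~\ref{lem:u2v2Basis}; if $r = uv \pm \i vu$, then $B$ is the skew polynomial ring $\kk_q[u,v]$ with $q = \mp\i$, which is AS regular of GK dimension $2$. In each case the map $\sigma = \begin{bmatrix}\alpha & 0\\ 0 & (-1)^{a+1}\alpha\end{bmatrix}$ satisfies $\sigma(r)\in\kk^\times r$, so $\sigma$ descends to a graded algebra automorphism of $B$; hence $A = B[t;\sigma]$ is a graded Ore extension of an AS regular algebra and is therefore AS regular of GK dimension $3$ (see the remarks after Definition~\ref{def:asregular}).

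For the $H$-action, write $V = A_1 = \pi_i(u,v)\oplus T$, where $T = T_{(-1)^a,(-1)^a,-1,\gamma}(t)$ with $\gamma$ as permitted by Lemma~\ref{C2xC2xC2inner}. Making $A = T(V)/(W)$ into an $H$-module algebra is equivalent to checking that the space of relations $W = \kk r \oplus \mathrm{span}\{tu - \sigma(u)t,\ tv - \sigma(v)t\}$ is an $H$-submodule of $V\otimes V = \pi_i^{\otimes 2}\oplus(\pi_i\otimes T)\oplus(T\otimes\pi_i)\oplus T^{\otimes 2}$, and since $W$ respects this direct-sum decomposition it suffices to treat the two pieces separately. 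The summand $\pi_i^{\otimes 2}$ is a multiplicity-free sum of four one-dimensional modules whose generators are (for \kashina{1}) $u^2 + (-1)^{i+1}v^2$, $u^2 - (-1)^{i+1}v^2$, and $uv \pm (-1)^{i+1}\i\,vu$, so each relation $r$ in the list spans a one-dimensional $H$-submodule; the \kashina{3} list is the analogous one. From the action table \eqref{lab:K1actionTable} one reads off $H$-module isomorphisms $\pi_i\otimes T \cong \pi_j \cong T\otimes\pi_i$ in explicitly labeled bases, and $\mathrm{span}\{tu - \sigma(u)t,\ tv - \sigma(v)t\}$ is $H$-stable precisely when it is the graph of a scalar multiple of the composite isomorphism, which forces $\sigma$ to be exactly $\mathrm{diag}(\alpha,(-1)^{a+1}\alpha)$. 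Thus $W$ is an $H$-submodule and the action of $H$ on $A_1$ extends to $A$.

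Inner-faithfulness is then immediate: the one-dimensional summand $T$ of $A_1$ has $z$ acting by $-1$, i.e. $b = 1$ in the notation of Lemma~\ref{C2xC2xC2inner}, so that lemma gives that the $H$-action on $A_1$, hence on $A = T(A_1)/(W)$, is inner-faithful. The main obstacle is the middle step, and within it two pieces of bookkeeping: computing the decompositions of $\pi_i^{\otimes 2}$, $\pi_i\otimes T$, and $T\otimes\pi_i$ when $\Delta(w)$ is the cumbersome four-term sum (which is exactly what Lemma~\ref{lem:K1K3trick} reduces to a case check), and then correctly matching the two copies of $\pi_j$ inside $(\pi_i\otimes T)\oplus(T\otimes\pi_i)$ so as to pin down which automorphisms $\sigma$ and which quadratic relations $r$ are compatible with the action. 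Everything else — AS regularity and the inner-faithfulness conclusion — follows formally once those decompositions are in hand, and the argument for \kashina{3} runs identically using its modified decomposition and action table.
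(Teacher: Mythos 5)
Your proposal is correct and follows essentially the same route as the paper: it uses the decomposition of $\pi_i^{\otimes 2}$ and the action table \eqref{lab:K1actionTable} (via Lemma~\ref{lem:K1K3trick}) to identify the admissible relations $r$ and the isomorphisms $\pi_i\otimes T\cong\pi_j\cong T\otimes\pi_i$ that pin down $\sigma$, and then invokes Lemma~\ref{C2xC2xC2inner} for inner-faithfulness. Your only additions are to make explicit the standard facts the paper leaves implicit — that the quadratic relation space must be an $H$-submodule of $A_1\otimes A_1$, and that the listed algebras are AS regular as graded Ore extensions — and your verification that the graph condition forces $\sigma=\operatorname{diag}(\alpha,(-1)^{a+1}\alpha)$ matches the paper's computation in both cases $a=0$ and $a=1$.
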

\subsection{Invariant Rings}
In this subsection, we determine the fixed rings for each of the actions described in the previous
two subsections.

\begin{lemma} \label{lem:restrictToR}
Let $H$ be a semisimple Hopf algebra of dimension sixteen with $\bG(H) = C_2\times C_2\times C_2$,
and let $H$ act inner-faithfully on $A = \frac{\kk\langle u,v \rangle}{(r)}[t;\sigma]$ as in 
Theorems~\ref{innerfaithful24} or \ref{innerfaithful13}.  Then:
\begin{enumerate}
\item Suppose that $mt^\ell$ is an element of $A$ where $m$ is a monomial in $u$ and $v$
with respect to some fixed choice of basis of $A$, and $\ell \geq 0$ is an integer.
If $x.mt^\ell = y.mt^\ell = z.mt^\ell = mt^\ell$, then $\ell$ is even, and $m$ has even degree in $u$ and in $v$.
\item 
Let $R$ denote the subalgebra of $A$ generated by monomials of even $u$-, $v$-, and $t$-degree.
Then the subalgebra of $R$ of elements of $t$-degree zero is commutative.
\item The action of $w$ on $A$ restricts to an action on $R$, and $w$ acts as an automorphism
of $R$.
\end{enumerate}
\end{lemma}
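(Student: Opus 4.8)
The plan is to prove the three parts in order, since later parts depend on earlier ones. For part (1), I would use the explicit action of $x$, $y$, $z$ on a monomial $m = m(u,v)$ recorded in the decomposition formulas and action tables preceding this lemma. In all four cases (\kashina 1 through 4), $x$ and $y$ act diagonally on $u,v$ by $\pm 1$ with opposite signs, so on a monomial $m$ of $u$-degree $p$ and $v$-degree $q$ the scalar by which $x$ acts is $(-1)^{q}$ (up to the contribution from $t$, which by hypothesis has $x.t = y.t = (-1)^a t$) and $y$ acts by $(-1)^{p}$, while $z$ acts on $m t^\ell$ by $(-1)^{i\ell}$ (or the appropriate power). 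Setting each of these eigenvalues equal to $1$ — remembering that $z.t = -t$ so the $t$-part contributes $(-1)^\ell$ — forces $\ell$ even, $p$ even, and $q$ even. This is a short linear-algebra-over-$\mathbb{F}_2$ computation analogous to the one in the commented-out Theorem~\ref{thm.innfaithC2C2C2}.

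For part (2), the subalgebra $R$ has $t$-degree-zero part generated by monomials of even $u$- and $v$-degree inside $\kk\langle u,v\rangle/(r)$, where $r = u^2 \pm v^2$ or $uv \pm \i vu$ (or $uv \pm vu$). I would observe that this even subalgebra is generated by $u^2$, $v^2$, and $uv$ (or $u^2$, $v^2$, $vu$ — the relation $r$ lets one rewrite the length-two monomials so that, say, $u^2$ and one of $uv, vu$ and $v^2$ suffice). One then checks these three generators pairwise commute: using the relation $r=0$ together with Lemma~\ref{lem:u2v2Basis} (which supplies a monomial basis, so that identities can be verified on basis elements), one sees $u^2$ is central in the even subalgebra, and $uv \cdot vu = (uv)(vu)$ matches $vu \cdot uv$ after applying $r$. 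The cleanest route is to note that in each case the algebra $\kk\langle u,v\rangle/(r)$ is (up to the stated normalization) either $\kk_{-1}[u,v]$, $\kk_{\i}[u,v]$, or $\kk\langle u,v\rangle/(u^2-cv^2)$, and in every such algebra the subalgebra of even-total-degree elements is commutative — a fact that can be read off the defining relations directly.

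For part (3), I need that $w.R \subseteq R$ and that $w$ is an algebra automorphism of $R$. Since $w$ swaps $x \leftrightarrow y$ (via $wx = yw$, $wy = xw$) and commutes with $z$, conjugation by $w$ preserves the eigenspace decomposition that defines $R$: if $v \in R$ has $x,y,z$-eigenvalues all $1$, then so does $w.v$, because $x.(w.v) = w.(y.v) = w.v$ and similarly for $y$ and $z$ (here using that $x.v=y.v=v$ on $R$). Hence $w.R \subseteq R$. That $w$ acts as an algebra automorphism of $R$ is more delicate: $w$ is grouplike modulo the ideal structure but its coproduct is \emph{not} $w \otimes w$, so $w$ is not a priori multiplicative on all of $A$. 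The key point — and the main obstacle — is that on the commutative subalgebra $R$ (or at least on the relevant components), the "correction terms" in $\Delta(w)$ act trivially: using Lemma~\ref{lem:K2K4trick} or Lemma~\ref{lem:K1K3trick}, $w.(f \otimes g) = (w.f)\otimes(h.g)$ where $h \in \{1, x, y, xy\}$ is determined by the $z$- and $y$-eigenvalues of $w.f$, and on elements of $R$ those eigenvalues are such that $h$ acts as the identity on $R$. I would verify this by tracking, for $f, g$ lying in the even subalgebra, which branch of the case distinction in Lemma~\ref{lem:K1K3trick}/\ref{lem:K2K4trick} applies, and confirming that in each branch the group element $h$ acts trivially on the factor $g \in R$ (since $g$ has all of $x,y,z$ acting by $1$). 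This reduces $w.(fg) = (w.f)(w.g)$ on $R$, giving the automorphism property; injectivity is automatic since $w$ is invertible in $H$ (indeed $w^2 \in \kk[\bG(H)]$ acts invertibly on $R$ by part (1)'s eigenvalue bookkeeping), so $w$ acts bijectively on the finite-dimensional graded pieces of $R$.
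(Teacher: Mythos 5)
Parts (1) and (3) of your proposal follow essentially the paper's own argument: the mod-$2$ eigenvalue bookkeeping for $x$, $y$, $z$ in (1), and in (3) the observation that $R$ is the common fixed space of $x$, $y$, $z$, so that whichever branch of Lemma~\ref{lem:K2K4trick} or Lemma~\ref{lem:K1K3trick} applies, the correcting group element lies in $\langle x,y\rangle$ and acts trivially on the second factor, making $w$ multiplicative on $R$. Those two parts are fine.

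Part (2), however, contains a genuine error. The subalgebra in question is generated by monomials of even $u$-degree \emph{and} even $v$-degree; the element $uv$ has odd degree in each variable and does not lie in it, so your claimed generating set $\{u^2,v^2,uv\}$ is wrong. The correct identifications, which the paper makes, are $\kk[u^2,v^2]$ when $r = uv - \i^\beta vu$ and $\kk[u^2,(uv)^2,(vu)^2]\cong \kk[X,Y,Z]/(X^4-YZ)$ when $r = u^2 \pm v^2$. Your fallback ``cleanest route'' --- that the subalgebra of even \emph{total} degree is commutative in every case --- is false: in $\kk\langle u,v\rangle/(uv-\i vu)$ one computes $(uv)u^2 = -u^2(uv)$, so $u^2$ and $uv$ anticommute, and the relation $r = uv \pm \i vu$ genuinely occurs for \twokashinas{1}{3} by Theorem~\ref{innerfaithful13}. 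The lemma survives only because the subalgebra it actually concerns is the smaller one, $\kk[u^2,v^2]$, which is commutative since $u^2v^2 = \i^{4\beta}v^2u^2 = v^2u^2$. So the statement is correct, but your argument for (2) proves (in one case) a false stronger claim and must be replaced by the check on the correct generators.
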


\begin{proof}
Recall that $x.t = (-1)^at = y.t$.  If $x.mt^\ell = mt^\ell$, then one has that the $v$-degree
of $m$ is congruent to $a\ell$ modulo 2.  Similarly, the action of $y$ implies that the $u$-degree of
$m$ is congruent to $a\ell$ modulo 2.  It follows that the total degree of $m$ is even, so that $z.m = m$.
It therefore follows that $\ell$ must be even as well, which implies that $m$ has both even degree in both $u$ and $v$, proving the first claim.

By the descriptions given in the previous two subsections, the possible relations $r$ defining $A$
are $uv - \i^\beta vu$ for $\beta = 0,1,2,3$, and $u^2 - (-1)^nv^2$ for $n = 0,1$,
depending on the Hopf algebra and representation $\pi$ chosen.  The subalgebra of $A$ of consisting of elements of even $u$-degree, even $v$-degree, and $t$-degree zero 
is either $\kk[u^2,v^2]$ in the first case, or $\kk[u^2,(uv)^2,(vu)^2]$
in the second case.  A brief calculation shows that each of these rings is commutative.
Note that in the first case, we obtain a ring isomorphic to a commutative polynomial
ring, and in the second case the ring is isomorphic to $\kk[X,Y,Z]/(X^4 - YZ)$.

It is clear that $w$ acts on the subalgebra $R$.  Further, since $x$, $y$, and $z$ all act trivially
on $R$, we have that $w$ acts on $R$ as an automorphism by Lemmas~\ref{lem:K2K4trick} and
\ref{lem:K1K3trick}.
\end{proof}

We record the following lemma, which follows from direct calculation.
\begin{lemma} \label{lem:C23evenActions}
Let $H$ be a semisimple Hopf algebra of dimension sixteen with $\bG(H) = C_2\times C_2\times C_2$,
and let $H$ act inner-faithfully on $A = \frac{\kk\langle u,v \rangle}{(r)}[t;\sigma]$ as in 
Theorems~\ref{innerfaithful24} or \ref{innerfaithful13}.  Then the action of $w$
satisfies the following for $i \in \{ 1,2\}$:
\begin{center}
\begin{tabular} {|c|c|c|c|c|c|} \hline
Kashina   & $u^2$ & $v^2$ & $(uv)^2$ & $(vu)^2$ & $t^2$ \\ \hline \hline
 1 & $(-1)^{i+1}v^2$ & $(-1)^{i+1}u^2$ & $-(vu)^2$ & $-(uv)^2$ & $t^2$ \\ \hline
 2 & $(-1)^{i+1}v^2$ & $(-1)^{i+1}u^2$ & $(vu)^2$ & $(uv)^2$ & $t^2$ \\ \hline
 3 & $v^2$ & $u^2$ & $(vu)^2$ & $(uv)^2$ & $-t^2$ \\ \hline
 4 & $v^2$ & $u^2$ & $-(vu)^2$ & $-(uv)^2$ & $-t^2$ \\ \hline
\end{tabular}
\end{center}
\end{lemma}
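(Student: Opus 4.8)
The plan is to reduce each entry of the table to an application (or, for the four-fold tensors, an iteration) of the relevant ``trick lemma.'' Recall from the analysis preceding Theorems~\ref{innerfaithful24} and~\ref{innerfaithful13} that $A_1 = \pi_i(u,v)\oplus T(t)$ with $z.t = -t$, where $\pi_i$ is one of the two two-dimensional irreducibles from Notation~\ref{not:K1toK4reps} and the actions of $x$, $y$, $w$ on $t$ are the ones recorded in those theorems. By Lemma~\ref{lem:restrictToR}(3), $w$ restricts to an algebra automorphism of the subalgebra $R$, and each of $u^2$, $v^2$, $(uv)^2$, $(vu)^2$, $t^2$ lies in $R$; so it suffices to evaluate $w$ on these five elements, each regarded as the image in $A$ of the corresponding tensor $u\otimes u$, $v\otimes v$, $u\otimes v\otimes u\otimes v$, $v\otimes u\otimes v\otimes u$, or $t\otimes t$ in $A_1^{\otimes k}$ under the $H$-equivariant multiplication map $A_1^{\otimes k}\to A_k$.

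For $u\otimes u$, $v\otimes v$, and $t\otimes t$ this is a single application of Lemma~\ref{lem:K2K4trick} (for Kashinas \#2 and \#4) or Lemma~\ref{lem:K1K3trick} (for Kashinas \#1 and \#3): from the explicit matrices one checks whether $zw.f = \pm w.f$, and in the \#1/\#3 case also whether $yw.f = \pm w.f$, for $f \in \{u,v,t\}$; the lemma then rewrites $w.(f\otimes f)$ as $w.f$ tensored with the image of $w.f$ under one of the grouplikes $x$, $y$, $xy$, each of which acts on $\pi_i$ by a diagonal $\pm 1$ matrix and on $T(t)$ by $\pm 1$, so each such action is immediate. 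For the four-fold tensors $u\otimes v\otimes u\otimes v$ and $v\otimes u\otimes v\otimes u$ one iterates the same trick lemma; alternatively, whenever the defining relation is $uv = \pm\i^{\beta}vu$ one has $(uv)^2 = \i^{-\beta}u^2v^2$ and $(vu)^2 = \i^{\beta}v^2u^2$ in $A$, with $v^2u^2 = u^2v^2$, so these two entries follow from the already-computed $w.(u^2)$ and $w.(v^2)$ together with multiplicativity of $w$ on $R$, and only the relations of the form $u^2 = (-1)^nv^2$ require the four-fold computation. Doing this for $i \in \{1,2\}$ and each admissible defining relation, for each of the four Hopf algebras, produces the table.

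I do not expect a genuine conceptual obstacle here: the content is purely computational, and the real work is the bookkeeping --- tracking which basis vector of $\pi_i$ is hit by $w$, keeping signs straight, and verifying that the answers come out in the claimed form, in particular that the $(uv)^2$, $(vu)^2$, and $t^2$ columns are independent of $i$ while the $u^2$ and $v^2$ columns carry the factor $(-1)^{i+1}$ for Kashinas \#1 and \#2. The $i$-independence in the Kashina \#4 columns, for instance, arises because the factor $\i$ appearing in $\pi_2(w)$ contributes a $-1$ upon squaring that is cancelled by the sign $\pi_2(z) = -\Id$. I would present one representative computation in each of the two coproduct families in full and remark that the remaining cases are entirely analogous.
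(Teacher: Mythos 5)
Your proposal is correct and matches the paper's approach: the paper gives no written proof, stating only that the lemma ``follows from direct calculation,'' and the intended calculation is exactly the one you describe, namely applying Lemmas~\ref{lem:K2K4trick} and \ref{lem:K1K3trick} to the tensors $u\otimes u$, $v\otimes v$, $t\otimes t$ and (iteratively) to the four-fold tensors, using the explicit matrices of Notation~\ref{not:K1toK4reps} and the one-dimensional representations fixed in Theorems~\ref{innerfaithful24} and \ref{innerfaithful13}. Your spot-check of the sign mechanism (the $\i$ in $\pi_2(w)$ for \kashina{4} squaring to $-1$ and cancelling against $\pi_2(z)=-\Id$) and your shortcut for $(uv)^2$, $(vu)^2$ via $u^2v^2$ when $u$ and $v$ skew-commute are both sound.
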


Since $w$ acts on $R$ as an automorphism and $R$ is generated by the above elements (depending
on the relation $r$ used), we can use this table to determine the invariants.  When
$u$ and $v$ commute up to a scalar (i.e.,  $r = uv \pm vu$ or $r = uv \pm \i vu$), the following result follows immediately from the table.

\begin{theorem}
Let $H$ be a semisimple Hopf algebra of dimension sixteen with $\bG(H) = C_2\times C_2\times C_2$,
and let $H$ act inner-faithfully on $A = \frac{\kk\langle u,v \rangle}{(r)}[t;\sigma]$
as in Theorems~\ref{innerfaithful24} or \ref{innerfaithful13} where $r = uv - \i^\beta vu$.
Then every invariant is of the form:
\begin{equation*}
\begin{tabular}{|c|c|} \hline
\twokashinas{1}{2} & $\displaystyle{\sum_{j,k,m} \alpha_{j,k,m} u^{2j}v^{2j}(u^{2k} + (-1)^{ik+k}v^{2k})t^{2m}}$ \\ \hline
\twokashinas{3}{4} & $\displaystyle{\sum_{j,k,m} \alpha_{j,k,m} u^{2j}v^{2j}(u^{2k} + (-1)^{m}v^{2k})t^{2m}}$ \\ \hline
\end{tabular}
\end{equation*}
for some $\alpha_{j,k,m} \in \kk$.
\end{theorem}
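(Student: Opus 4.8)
The plan is to reduce the computation of $A^H$ to two elementary steps: first compute the invariants of the group of grouplikes $\bG(H) = \grp{x,y,z}$, then cut down by $w$-invariance. Since $H$ is generated as an algebra by $x,y,z,w$, all of which have counit $1$, and since for fixed $a \in A$ the set $\{h \in H : h.a = \epsilon(h)a\}$ is a unital subalgebra of $H$, an element $a$ lies in $A^H$ exactly when $x.a = y.a = z.a = w.a = a$. For the first step, I would use that $r = uv - \i^\beta vu$ makes $\kk\langle u,v\rangle/(r)$ a quantum plane with $\kk$-basis $\{u^jv^k\}$, so $A = \frac{\kk\langle u,v\rangle}{(r)}[t;\sigma]$ has $\kk$-basis $\{u^jv^kt^\ell\}$, on which $\grp{x,y,z}$ acts diagonally. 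By Lemma~\ref{lem:restrictToR}(1) (the converse being immediate) the fixed basis monomials are precisely those of the form $u^{2p}v^{2q}t^{2m}$, so $A^{\grp{x,y,z}} = \bigoplus_{p,q,m \ge 0}\kk\, u^{2p}v^{2q}t^{2m}$; this is the ring $R$ of Lemma~\ref{lem:restrictToR}, in which $u^2$ and $v^2$ commute, and $w$ acts on it as an algebra automorphism.

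Next I would pin down the $w$-action on this basis. Writing $u^{2p}v^{2q}t^{2m} = (u^2)^p(v^2)^q(t^2)^m$ and applying the automorphism $w$ factor by factor, with the values of $w$ on $u^2,v^2,t^2$ taken from Lemma~\ref{lem:C23evenActions} and using $v^{2p}u^{2q} = u^{2q}v^{2p}$, I expect to obtain
\[
w.(u^{2p}v^{2q}t^{2m}) = \zeta_{p,q,m}\,u^{2q}v^{2p}t^{2m},
\]
where $\zeta_{p,q,m} = (-1)^{(i+1)(p+q)}$ for \twokashinas{1}{2} and $\zeta_{p,q,m} = (-1)^m$ for \twokashinas{3}{4}; in both cases $\zeta_{p,q,m} = \zeta_{q,p,m} \in \{\pm 1\}$. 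Hence a general element $a = \sum c_{p,q,m}\,u^{2p}v^{2q}t^{2m}$ of $A^{\grp{x,y,z}}$ lies in $A^H$ if and only if $c_{q,p,m} = \zeta_{p,q,m}\,c_{p,q,m}$ for all $p,q,m$.

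The final step is to solve this linear condition and rewrite the answer. Pairing the index $(p,q,m)$ with $(q,p,m)$: when $p = q$ there is no constraint beyond $c_{p,p,m}(1-\zeta_{p,p,m}) = 0$, and when $p \ne q$ the two coefficients determine one another, so $A^H$ is spanned by the elements $u^{2p}v^{2q}t^{2m} + \zeta_{p,q,m}\,u^{2q}v^{2p}t^{2m}$. Setting $j = \min(p,q)$ and $k = |p-q|$, using $\zeta^{p+q} = \zeta^{p-q}$ (valid since $\zeta \in \{\pm 1\}$) together with $v^{2j}u^{2k} = u^{2k}v^{2j}$, each such element is a scalar multiple of $u^{2j}v^{2j}(u^{2k} + (-1)^{ik+k}v^{2k})t^{2m}$ for \twokashinas{1}{2} and of $u^{2j}v^{2j}(u^{2k} + (-1)^m v^{2k})t^{2m}$ for \twokashinas{3}{4}, which gives the claimed descriptions. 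I expect the only delicate point to be the bookkeeping in the previous paragraph — verifying that $w$ genuinely permutes the monomial basis up to a sign and extracting that sign correctly from Lemma~\ref{lem:C23evenActions} — since the rest is routine linear algebra.
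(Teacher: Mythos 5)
Your proposal is correct and follows essentially the same route as the paper: Lemma~\ref{lem:restrictToR} is used to reduce to the subalgebra $R$ spanned by the monomials $u^{2p}v^{2q}t^{2m}$, on which $w$ acts as an automorphism, and the signs $\zeta_{p,q,m}$ you extract from Lemma~\ref{lem:C23evenActions} (namely $(-1)^{(i+1)(p+q)}=(-1)^{(i+1)k}$ for \twokashinas{1}{2} and $(-1)^m$ for \twokashinas{3}{4}) agree with the table in the statement. You have simply made explicit the bookkeeping that the paper dismisses with ``follows immediately from the table.''
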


When the relation is $r = u^2 - (-1)^nv^2$, we have to work a bit harder.
As mentioned in the proof of Lemma~\ref{lem:restrictToR}, the ring $R$
on which $w$ acts is the subalgebra of $A$ generated by $u^2$, $(uv)^2$
and $(vu)^2$.  We present $R$ as  $\kk[X,Y,Z][t^2;\sigma^2]/(X^4 - YZ)$
where $X = u^2$, $Y = (uv)^2$ and $Z = (vu)^2$.  A basis of $R$ is given by
$$\{X^jY^kt^{2m}~|~j,k,m \geq 0\} \cup \{X^jZ^\ell t^{2m}~|~j,m \geq 0, \ell > 0\}.$$
By Lemma~\ref{lem:C23evenActions}, one has that $w.X = (-1)^{i+n+1}X$ for \twokashinas{1}{2},
and $w.X = (-1)^nX$ for \twokashinas{3}{4}.  Since $w$ acts on $R$ as an
automorphism, we therefore have the following theorem:

\begin{theorem}
\label{invariantform1234}

Let $H$ be a semisimple Hopf algebra with $\bG(H) = C_2\times C_2\times C_2$,
and let $H$ act inner-faithfully on $A = \frac{\kk\langle u,v \rangle}{(r)}[t;\sigma]$
as in Theorems~\ref{innerfaithful24} or \ref{innerfaithful13} where $r = u^2 - (-1)^nv^2$.
Then every invariant is of the form:
\begin{equation*}
\begin{tabular}{|c|c|} \hline
\kashina{1} & $\displaystyle{\sum_{j,k,m} \alpha_{j,k,m} u^{2j}((uv)^{2k} + (-1)^{(i+n+1)j + k}(vu)^{2k})t^{2m}}$ \\ \hline
\kashina{2} & $\displaystyle{\sum_{j,k,m} \alpha_{j,k,m} u^{2j}((uv)^{2k} + (-1)^{(i+n+1)j}(vu)^{2k})t^{2m}}$ \\ \hline
\kashina{3} & $\displaystyle{\sum_{j,k,m} \alpha_{j,k,m} u^{2j}((uv)^{2k} + (-1)^{nj + m}(vu)^{2k})t^{2m}}$ \\ \hline
\kashina{4} & $\displaystyle{\sum_{j,k,m} \alpha_{j,k,m} u^{2j}((uv)^{2k} + (-1)^{nj+m+k}(vu)^{2k})t^{2m}}$ \\ \hline
\end{tabular}
\end{equation*}
for some $\alpha_{j,k,m} \in \kk$.
\end{theorem}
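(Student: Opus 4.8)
The plan is to reduce the computation of $A^H$ to the computation of the fixed subring of a single involution, and then simply read off the answer from the basis of $R$ and the sign table of Lemma~\ref{lem:C23evenActions}. First I would note that an $H$-invariant is in particular fixed by the grouplikes $x,y,z$; since each of $x,y,z$ acts diagonally on the standard monomial basis of $A$ (scaling every monomial by $\pm 1$), an element fixed by all three must have every monomial in its support fixed by all three. By Lemma~\ref{lem:restrictToR}(1) such a monomial $mt^{\ell}$ has $\ell$ even and $m$ of even degree in both $u$ and $v$, hence lies in the subring $R$ of Lemma~\ref{lem:restrictToR}; thus $A^H \subseteq R$. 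Since $x,y,z$ act trivially on $R$ while $w^2$ is a $\kk$-linear combination of products of $x,y,z$ and is checked to act as the identity on $R$ in each of the four cases, the automorphism of $R$ induced by $w$ (Lemma~\ref{lem:restrictToR}(3)) squares to the identity. Hence $A^H = R^{w}$ is the $+1$-eigenspace of this involution.

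Next I would use the presentation $R = \kk[X,Y,Z][t^2;\sigma^2]/(X^4-YZ)$ with $X = u^2$, $Y = (uv)^2$, $Z = (vu)^2$, and the $\kk$-basis $\{X^jY^kt^{2m}\} \cup \{X^jZ^{\ell}t^{2m} : \ell > 0\}$ recorded above. From Lemma~\ref{lem:C23evenActions} together with the identity $v^2 = (-1)^n u^2$, the involution $w$ scales $X$ by a sign, scales $t^2$ by a sign, and interchanges $Y$ and $Z$ up to a sign, all three signs depending only on the Kashina number (and, for $X$, on $i$ and $n$). Consequently $w$ permutes the displayed basis up to nonzero scalars, with orbits the singletons $\{X^jt^{2m}\}$ and the pairs $\{X^jY^kt^{2m},\, X^jZ^kt^{2m}\}$ for $k \geq 1$. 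I would then record the scalar $\epsilon_{j,k,m}$ defined by $w.(X^jY^kt^{2m}) = \epsilon_{j,k,m}X^jZ^kt^{2m}$ by multiplying out the three tabulated signs; for instance this yields $\epsilon_{j,k,m} = (-1)^{(i+n+1)j+k}$ for \kashina{1}, $(-1)^{(i+n+1)j}$ for \kashina{2}, $(-1)^{nj+m}$ for \kashina{3}, and $(-1)^{nj+k+m}$ for \kashina{4}. Because $w$ is an involution on $R$ and each $\epsilon_{j,k,m}$ is $\pm 1$, the scalar attached to $w.(X^jZ^kt^{2m})$ is this same $\epsilon_{j,k,m}$.

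Finally I would conclude by elementary linear algebra. Decomposing $R$ as the direct sum of the spans of the $w$-orbits, an element of $R$ is $w$-invariant if and only if its projection onto each such span is. On the span of a singleton orbit, $X^jt^{2m}$ is fixed exactly when $\epsilon_{j,0,m} = 1$; on the span of a pair orbit the fixed subspace is one-dimensional, spanned by $X^jY^kt^{2m} + \epsilon_{j,k,m}X^jZ^kt^{2m}$. Rewriting $X,Y,Z$ in terms of $u,v$ and noting that the $k=0$ instance of the expression $u^{2j}((uv)^{2k} + \epsilon_{j,k,m}(vu)^{2k})t^{2m}$ equals $(1+\epsilon_{j,0,m})u^{2j}t^{2m}$ --- a nonzero multiple of $X^jt^{2m}$ precisely when $\epsilon_{j,0,m} = 1$, and zero otherwise --- I obtain that $A^H = R^{w}$ is exactly the $\kk$-span of the elements $u^{2j}((uv)^{2k} + \epsilon_{j,k,m}(vu)^{2k})t^{2m}$, which is the content of the four tables.

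The conceptual content here is light; the main obstacle is purely bookkeeping. One must propagate the signs of Lemma~\ref{lem:C23evenActions} correctly through the substitution $v^2 = (-1)^n u^2$ and through taking $k$-th powers, and must verify that the single displayed formula degenerates correctly when $k=0$ (it should vanish exactly when there is no invariant supported on $X^jt^{2m}$). A secondary point requiring care is the inclusion $A^H \subseteq R$: this relies on the fact that $x,y,z$ act diagonally on the chosen monomial basis of $A$, so that invariance under $x,y,z$ can be checked monomial by monomial before invoking Lemma~\ref{lem:restrictToR}(1).
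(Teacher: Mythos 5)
Your proposal is correct and follows essentially the same route as the paper: restrict invariants to the subalgebra $R$ via Lemma~\ref{lem:restrictToR}, present $R$ with the basis $\{X^jY^kt^{2m}\}\cup\{X^jZ^{\ell}t^{2m}\}$, and read off the $w$-action on basis elements from Lemma~\ref{lem:C23evenActions} together with $v^2=(-1)^nu^2$; your sign exponents $\epsilon_{j,k,m}$ agree with all four rows of the table. The only additions beyond the paper's argument are the explicit observations that $w$ acts as an involution on $R$ (since $w^2=f(x,y,z)$ acts trivially there) and the orbit-by-orbit linear algebra, both of which are correct and merely make explicit what the paper leaves implicit.
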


\begin{theorem} \label{fixedrings1234}
{Let $H$ be a semisimple Hopf algebra of dimension sixteen with $\bG(H) = C_2\times C_2\times C_2$,
and let $H$ act inner-faithfully on $A = \frac{\kk\langle u,v \rangle}{(r)}[t;\sigma]$
as in Theorems~\ref{innerfaithful24} or \ref{innerfaithful13}}, with $\pi_i$ a two-dimensional representation of $H$. 
Then the invariant subrings are:
\begin{center}
\begin{tabular}{|c|c|l|} \hline
Kashina  & Relation $r$ & Invariant ring \\ \hline
1 $(H_{d:-1,1})$ & $uv \pm \i vu$ & $\kk[u^2v^2,u^2+(-1)^{i+1}v^2,t^2]$ \\ \hline
1  $(H_{d:-1,1})$& $u^2 - (-1)^iv^2$ & $\kk[u^4,(uv)^2-(vu)^2,u^2((uv)^2+(vu)^2),t^2]$ \\ \hline
1 $(H_{d:-1,1})$ & $u^2 + (-1)^iv^2$ & $\kk[u^2,(uv)^2-(vu)^2,t^2]$ \\ \hline
2 $(H_{d:1,1})$ & $uv \pm vu$       & $\kk[u^2v^2,u^2+(-1)^{i+1}v^2,t^2]$ \\ \hline
2 $(H_{d:1,1})$ & $u^2 - (-1)^iv^2$ & $\kk[u^4,(uv)^2+(vu)^2,u^2((uv)^2-(vu)^2),t^2]$ \\ \hline
2 $(H_{d: 1,1})$ & $u^2 + (-1)^iv^2$ & $\kk[u^2,(uv)^2+(vu)^2,t^2]$ \\ \hline
3 $(H_{d:-1,-1})$ & $uv \pm \i vu$    & $\kk[u^2v^2,u^2+v^2,(u^2-v^2)t^2,t^4]$ \\ \hline
3 $(H_{d:-1,-1})$ & $u^2 - v^2$ & $\kk[u^2,(uv)^2+(vu)^2,((uv)^2-(vu)^2)t^2,t^4]$ \\ \hline
3 $(H_{d:-1,-1})$ & $u^2 + v^2$ & $\kk\!\!\left[\begin{array}{ll}u^4,(uv)^2+(vu)^2,((uv)^2-(vu)^2)t^2,\\u^2((uv)^2-(vu)^2),u^2t^2,t^4\end{array}\right]$ \\ \hline
4  $(H_{d:1,-1})$  & $uv \pm vu$       & $\kk[u^2v^2,u^2+v^2,(u^2-v^2)t^2,t^4]$ \\ \hline
4  $(H_{d:1,-1})$ & $u^2 - v^2$ & $\kk[u^2,(uv)^2-(vu)^2,((uv)^2+(vu)^2)t^2,t^4]$ \\ \hline
4 $(H_{d:1,-1})$ & $u^2 + v^2$ & $\kk\!\!\left[\begin{array}{ll}u^4,(uv)^2-(vu)^2,u^2((uv)^2+(vu)^2),\\((uv)^2+(vu)^2)t^2,u^2t^2,t^4\end{array}\right]$ \\ \hline
\end{tabular}
\end{center}
~\\
Hence the rings of invariants  in rows 1, 3, 4, and 6 are AS regular because they are Ore extensions of commutative polynomial rings, while the others are not AS regular by Lemma~\ref{notregular}.  Thus \twokashinas{1}{2} are reflection Hopf algebras for a three-dimensional AS regular algebra.
\end{theorem}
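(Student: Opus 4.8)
The plan is to compute each of the twelve invariant rings appearing in the table by using the explicit description of the invariants already obtained in Theorems~\ref{invariantform1234} and the preceding theorem, and then to read off AS regularity (or its failure) from the structure of the resulting ring. First, for each Hopf algebra $H$ and each allowable relation $r$, I would take the general form of an invariant provided by the relevant row of Theorem~\ref{invariantform1234} (or the $r = uv - \i^\beta vu$ theorem) and exhibit a finite list of invariant elements that generate the space of all such expressions as an algebra. Concretely, when $r = uv \pm \i^\beta vu$, the invariants are spanned by monomials $u^{2j}v^{2j}(u^{2k}\pm v^{2k})t^{2m}$, and one checks directly (using that $u,v$ commute up to a root of unity and that $t$ skew-commutes with $u,v$ via $\sigma$) that these are all products of $u^2v^2$, $u^2 \pm v^2$, and $t^2$ (for \twokashinas{1}{2}) or of $u^2v^2$, $u^2+v^2$, $(u^2-v^2)t^2$, and $t^4$ (for \twokashinas{3}{4}), since the extra sign $(-1)^m$ forces pairing of $t$-degree with the $u^2-v^2$ factor. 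When $r = u^2 - (-1)^nv^2$, I would use the basis of $R = \kk[X,Y,Z][t^2;\sigma^2]/(X^4-YZ)$ recorded before Theorem~\ref{invariantform1234}, together with Lemma~\ref{lem:C23evenActions}, to identify the $w$-invariant subalgebra; the key point is whether $w.X = \pm X$ and whether $w.t^2 = \pm t^2$, which splits each Kashina into the sub-cases listed in the table.

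Next, having produced a candidate generating set in each case, I would verify AS regularity for rows 1, 3, 4, and 6 by recognizing the invariant ring as an iterated Ore extension of a commutative polynomial ring: in those four cases the generators are three algebraically independent elements, two of which ($u^2v^2$ and $u^2\pm v^2$, or $u^2$ and $(uv)^2\pm(vu)^2$, etc.) commute and generate a commutative polynomial ring $\kk[a,b]$, while $t^2$ acts on $\kk[a,b]$ by the graded automorphism induced by $\sigma^2$, so the invariant ring is $\kk[a,b][t^2;\tau]$, which is AS regular since graded Ore extensions of AS regular algebras are AS regular. For the remaining eight rows I would invoke Lemma~\ref{notregular}: in each of those cases the minimal generating set of $A^H$ has four (or more) elements — for instance in \kashina{3} with $r = u^2+v^2$ one needs all six listed generators and no fewer, as one sees by examining the lowest-degree invariants and checking they cannot be expressed as products of others — and an AS regular algebra of GK dimension at most $3$ can be generated by at most $3$ elements, so these rings are not AS regular. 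Finally, since rows 1 and 6 include the case $\bG(H) = C_2\times C_2\times C_2$ with $A_1 = \pi_i \oplus T$ a three-dimensional representation, the AS regular invariant rings for \twokashinas{1}{2} realize these as reflection Hopf algebras for a three-dimensional AS regular algebra, completing the claim.

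The main obstacle I expect is the bookkeeping in establishing that the listed generating sets are genuinely \emph{minimal} in the eight non-regular cases — i.e., that no smaller set generates $A^H$ — since Lemma~\ref{notregular} only applies once we know $A^H$ cannot be $2$- or $3$-generated. This requires a careful Hilbert-series or degree-by-degree argument: one computes the Hilbert series of $A^H$ (which is available since $A^H = R$ or a subring thereof and $R$ has a known monomial basis), and shows that the number of algebra generators needed in low degrees already exceeds $3$; alternatively, one exhibits an explicit low-degree invariant that is not a polynomial in the lower-degree invariants. The computation is routine but must be done with care, especially for \kashina{3} and \kashina{4} with $r = u^2+v^2$, where the interplay between the relation $X^4 = YZ$ and the twist by $\sigma^2$ produces the six-generator presentations; these are exactly the cases where the NCAlgebra/Macaulay2 computations mentioned in the introduction would be used to confirm the generating sets and rule out relations among fewer generators.
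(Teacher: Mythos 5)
Your proposal follows essentially the same route as the paper: start from the explicit descriptions of the invariants in Theorem~\ref{invariantform1234} and its companion, show the listed elements generate (the paper outsources this combinatorial step to the lemmas of \cite[Subsection 1.4]{FKMW} rather than doing it by hand), recognize the rings in rows 1, 3, 4, and 6 as graded Ore extensions of commutative polynomial rings, and apply Lemma~\ref{notregular} to the remaining rows. Your explicit observation that one must also verify minimality of the generating sets in the non-regular cases before Lemma~\ref{notregular} applies is a point the paper leaves implicit, but it does not change the method.
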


\begin{proof}
These results follow from the results in \cite[Subsection 1.4]{FKMW}.  We include details
of how to apply these results in the case of Kashina \#3, when the relation is  $u^2 + v^2.$
In this case (using the notation from \emph{op. cit.}), one takes $x = (uv)^2$, $y = -(uv)^2$,
$z = u^2$, $w = t^2$, $k = 2$ and $\alpha = -1$.  The generators provided by the lemma
are precisely the generators listed in this case.
\end{proof}

\section{\texorpdfstring{$\bG(H) = C_4 \times C_2$}{G(H) = C4 x C2}} \label{c4xc2}
Throughout this section, let $H$ be a semisimple Hopf algebra of dimension sixteen with $\bG(H) = C_4 \times C_2 = \grp{x} \times \grp{y}$.
There is a third algebra generator $z$ of $H$ with $\epsilon(z) = 1$ with coproduct
\[ \Delta(z) = \frac{1}{2}\left(1 \otimes 1 + y \otimes 1 + 1 \otimes x^2 - y \otimes x^2 \right)(z \otimes z).
\]

There are seven non-isomorphic Hopf algebras in this family each of which has different algebra relations involving the element $z$.
We list these relations in the following table, grouped according to our treatment of these algebras.  In each case we give the notation for the Hopf algebra used in \cite{kashina}, the relations in $H$, and the location in \cite{kashina} where the algebra is discussed.

\begin{equation} \label{eqn:C4C2relations}
\begin{tabular}{|c|c|c|c|c|} \hline
  Kashina & \multicolumn{3}{c|}{Relations}& Page in \cite{kashina} \\ \hline \hline
5 ($H_{c: \sigma_1}$) & $zx = xz$   & $zy = x^2yz$ & $z^2 = \frac{1+\i}{2} + \frac{1-\i}{2}x^2$ & p. 629 (2)\\ \hline
7 ($H_{c: \sigma_0}$) & $zx = xz$   & $zy = x^2yz$ & $z^2 = x(\frac{1+\i}{2} + \frac{1-\i}{2}x^2)$ & p. 629 (1) \\ \hline \hline
6 ($H_{b: 1}$)& $zx = x^3z$ & $zy = yz$ & $z^2 = 1$& p. 627 (1) \\ \hline
10 ($H_{b: y}$) & $zx = x^3z$ & $zy = yz$ & $z^2 = y$ & p. 627 (2)\\ \hline
11 ($H_{b: x^2y}$) & $zx = x^3z$ & $zy = yz$ & $z^2 = x^2y$ & p. 628 (3)\\ \hline \hline
8 ($H_{a: 1}$)& $zx = xyz$ & $zy = yz$ & $z^2 = 1$ & p. 626 (1) \\ \hline
9 ($H_{a: y}$)& $zx = xyz$ & $zy = yz$ & $z^2 = y$ & p. 626 (2)\\ \hline
\end{tabular}
\end{equation}

We now record the following lemma, which identifies the inner-faithful representation of smallest
dimension for the Hopf algebras in this section.


\begin{lemma} \label{C4xC2inn}
  Let $H$ be a semisimple Hopf algebra of dimension sixteen with
  $\bG(H)=C_4\times C_2$.  Let $\pi$ be an irreducible
  two-dimensional representation of $H$, and let
  $\varphi = T_{\alpha,\beta,\gamma}$ be a one-dimensional
  representation of $H$.  Then:
  \begin{enumerate}
    \item In \twokashinas{5}{7}, the action of $H$ on $\pi$ is inner-faithful.
    \item In \threekashinas{6}{10}{11}, the action of $H$ on $\pi$ is not inner-faithful
      and the action of $H$ on $\pi \oplus \varphi$ is inner-faithful if and only if $\beta = -1$.
    \item In \twokashinas{8}{9}, the action of $H$ on $\pi$ is not inner-faithful
      and the action of $H$ on $\pi \oplus \varphi$ is inner-faithful if and only if $\alpha = \pm\i$.       
  \end{enumerate}
\end{lemma}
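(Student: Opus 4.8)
The plan is to derive all three parts from Theorem~\ref{dim2and3InnerFaithful}, so the first step is to confirm that each of \twokashinas{5}{7}, \threekashinas{6}{10}{11}, and \twokashinas{8}{9} has $|\bG(H^*)| = 8$. I would check this by counting algebra homomorphisms $H \to \kk$ directly from the relations in \eqref{eqn:C4C2relations} (equivalently, reading off the Wedderburn decomposition $\kk^{\oplus 8} \oplus M_2(\kk) \oplus M_2(\kk)$ from \cite{kashina}): in every case one finds exactly eight one-dimensional representations, and the same count records the constraints on the parameters of a one-dimensional representation $T_{\alpha,\beta,\gamma}$ that are needed later, e.g. $\beta = 1$ in \twokashinas{8}{9} and $\beta = \gamma^2$ in \threekashinas{6}{10}{11}. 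I would also recall, from the proof of Theorem~\ref{dim2and3InnerFaithful}, that for any irreducible two-dimensional $\pi$ the module $\pi \otimes \pi$ is a multiplicity-free sum of four one-dimensional representations.

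For part (1), let $\pi$ be an irreducible two-dimensional representation of \kashina{5} or \#7. Since $z$ commutes with $x$, the element $x$ is central and hence acts on $\pi$ as a scalar $\xi$ with $\xi^4 = 1$. I would rule out $\xi^2 = 1$: if $x^2$ acted as the identity on $\pi$, then the relation $zy = x^2yz$ would force $y$ to commute with $x$ and with $z$ on $\pi$, hence to act as a scalar as well, and then any eigenvector of $z$ would span an $H$-subrepresentation, contradicting irreducibility. So $\xi = \pm\i$, whence $x$ acts on $\pi \otimes \pi$ as $\xi^2 \Id = -\Id$. Since $x$ acts trivially on the trivial representation, the trivial representation is not a summand of $\pi \otimes \pi$, so $\pi \not\cong \pi^*$, and Theorem~\ref{dim2and3InnerFaithful}(1) gives that the action of $H$ on $\pi$ is inner-faithful.

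For parts (2) and (3), let $\pi$ be an irreducible two-dimensional representation. In all five algebras $y$ commutes with $x$ and with $z$, so $y$ is central, acts on $\pi$ as $\pm\Id$, and acts trivially on $\pi \otimes \pi$. The next step is to pin down the action of $x$. For \threekashinas{6}{10}{11}, the relation $zx = x^3z$ says conjugation by the invertible element $z$ carries $x$ to $x^{-1}$; the only eigenvalue multiset of $x$ on an irreducible $\pi$ compatible with this is $\{\i,-\i\}$ (any other possibility makes $z$ commute with $x$ or makes $x$ scalar, hence $\pi$ reducible), so $x^2$ acts as $-\Id$ on $\pi$ and $x$ acts with eigenvalues $\{-1,1,1,-1\}$ on $\pi \otimes \pi$. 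For \twokashinas{8}{9}, the relation $zx = xyz$ carries $x$ to $xy$ under conjugation by $z$; since $y$ acting trivially would make $z$ commute with $x$ and $\pi$ reducible, $y$ acts as $-\Id$, $x$ is conjugate to $-x$, so its eigenvalue multiset is $\{1,-1\}$ or $\{\i,-\i\}$, and $x$ acts with eigenvalues in $\{1,-1\}$ on $\pi \otimes \pi$. In each case a nontrivial grouplike annihilates $\pi$ --- one of $1-y$ or $1-x^2y$ for \threekashinas{6}{10}{11}, and one of $1-x^2$ or $1-x^2y$ for \twokashinas{8}{9} --- so by Lemma~\ref{HopfIdeal} the action on $\pi$ is not inner-faithful; Theorem~\ref{dim2and3InnerFaithful}(1) then gives $\pi \cong \pi^*$, so Theorem~\ref{dim2and3InnerFaithful}(2) applies. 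Finally, from the actions of $x$ and $y$ on $\pi \otimes \pi$ just computed, each of its four one-dimensional summands $T_{\alpha,\beta,\gamma}$ has $\beta = 1$ for \threekashinas{6}{10}{11} and $\alpha \in \{1,-1\}$ for \twokashinas{8}{9}; the parameter count from the first step shows there are exactly four one-dimensional representations with that property, and since $\pi \otimes \pi$ is multiplicity-free these four are precisely its summands. By Theorem~\ref{dim2and3InnerFaithful}(2), the action on $\pi \oplus \varphi$ is inner-faithful if and only if $\varphi$ is not one of them, i.e. if and only if $\beta = -1$ for \threekashinas{6}{10}{11} and $\alpha = \pm\i$ for \twokashinas{8}{9}.

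The main obstacle I anticipate is the middle step in parts (2) and (3): determining the action of $x$ on the two-dimensional irreducibles precisely enough to control its eigenvalues on $\pi \otimes \pi$, and then verifying that the four one-dimensional characters occurring in $\pi \otimes \pi$ genuinely exhaust the claimed order-four subgroup of $\bG(H^*)$ rather than a proper subset --- which is exactly where multiplicity-freeness of $\pi \otimes \pi$ is essential. Working instead from Kashina's explicit matrices for the two-dimensional representations would turn all of this into short direct computations, with the $\pi \otimes \pi$ decompositions quoted from \cite{kashina}.
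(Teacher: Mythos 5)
Your proof is correct and follows the same route as the paper: the paper's own proof is a one-line reduction to Theorem~\ref{dim2and3InnerFaithful} together with a citation of Kashina's computations, and you simply supply those computations yourself, deriving the actions of $x$, $y$, $z$ on the irreducible two-dimensional representations from the relations in \eqref{eqn:C4C2relations} rather than reading them off Kashina's explicit matrices. One harmless slip: for \kashina{6} the constraints on a one-dimensional representation are $\alpha^2=\gamma^2 = 1$ with $\beta = \pm 1$ free (since $z^2 = 1$ there), so ``$\beta = \gamma^2$'' is wrong for that algebra; but the count you actually use later --- exactly four one-dimensional representations with $\beta = 1$ --- is still correct for each of \threekashinas{6}{10}{11}, so nothing in the argument breaks.
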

\begin{proof}
These assertions follow from Theorem~\ref{dim2and3InnerFaithful}, and the calculations
appearing in \cite[Section 5]{kashina}.
\end{proof}

The next lemma is a version of Lemmas~\ref{lem:K2K4trick} and \ref{lem:K1K3trick} for
the class of Hopf algebras under consideration in this section.
\begin{lemma} \label{lem:trickC_4C_2}
Let $H$ be a semisimple Hopf algebra of dimension sixteen with $\bG(H)=C_4\times C_2$, as described at the beginning of this section.
Let $B$ and $C$ be $H$-representations, and let $f \in B$, $g \in C$.  Then one has
\[
z.(f\otimes g)=\begin{cases}
z.f\otimes z.g & \text{if } yz.f=z.f\\
z.f\otimes x^2z.g & \text{if } yz.f=-z.f.
\end{cases}
\]
\end{lemma}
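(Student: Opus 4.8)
The plan is to unwind the definition of the $H$-action on a tensor product: for $h\in H$ one has $h.(f\otimes g)=\sum h_{(1)}.f\otimes h_{(2)}.g$, i.e.\ $z$ acts on $B\otimes C$ via the element $\Delta(z)\in H\otimes H$. Using the coproduct of $z$ recorded at the beginning of this section,
\[ \Delta(z) = \tfrac{1}{2}\bigl(1 \otimes 1 + y \otimes 1 + 1 \otimes x^2 - y \otimes x^2 \bigr)(z \otimes z) = \tfrac12\bigl(z\otimes z + yz\otimes z + z\otimes x^2z - yz\otimes x^2z\bigr), \]
and applying this to $f\otimes g$ gives
\[ z.(f\otimes g) = \tfrac12\bigl( z.f \otimes z.g + (yz.f) \otimes z.g + z.f \otimes (x^2z.g) - (yz.f) \otimes (x^2z.g)\bigr). \]

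Next I would split into the two cases of the statement. If $yz.f=z.f$, the first two summands add to $z.f\otimes z.g$ while the last two cancel, so $z.(f\otimes g)=z.f\otimes z.g$; if $yz.f=-z.f$, the first two summands cancel while the last two add to $z.f\otimes x^2z.g$, so $z.(f\otimes g)=z.f\otimes x^2z.g$. This is precisely the claimed formula, so once the coproduct is expanded the argument is a one-line computation, entirely parallel to Lemmas~\ref{lem:K2K4trick} and~\ref{lem:K1K3trick}.

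The only point deserving comment is why these two cases suffice: since $y$ generates the $C_2$ factor of $\bG(H)$ it satisfies $y^2=1$, hence acts semisimply with eigenvalues $\pm1$ on every $H$-module, so in all the representations to which we will apply the lemma the vector $z.f$ is a $y$-eigenvector with eigenvalue $1$ or $-1$. More invariantly, writing $p_\pm=\tfrac12(1\pm y)$ for the two eigenprojections of $y$, the coproduct rewrites as $\Delta(z)=(p_+z\otimes z)+(p_-z\otimes x^2z)$, and since $z.(f\otimes g)$ is bilinear in $(f,g)$ the general case follows from the two eigenvector cases by linearity. I do not anticipate any genuine obstacle here; the only care needed is correct bookkeeping of the four signs in the expansion of $\Delta(z)$ and noting that $x^2$ acts on $z.g$ in the second tensor factor exactly as the grouplike element $x^2\in H$ does, so that the expression $x^2z.g$ in the statement is unambiguous.
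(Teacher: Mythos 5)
Your computation is correct and is exactly the direct verification the paper has in mind (the paper states this lemma without proof, as an analogue of Lemma~\ref{lem:K2K4trick}, which it says "can be verified directly"): expanding $\Delta(z)=\tfrac12(z\otimes z+yz\otimes z+z\otimes x^2z-yz\otimes x^2z)$ and splitting on the $y$-eigenvalue of $z.f$ gives the two cases immediately. Your closing remark on why the two eigenvector cases suffice is a sensible extra observation but not needed beyond what the paper intends.
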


\subsection{\twokashinas{5}{7}}
The one-dimensional representations for \kashina{5} are $T_{\pm1, \pm 1, \pm 1}(t)$, where the subscripts indicate the action of $x$, $y$, and $z$, respectively, on the basis element $t$. The irreducible two-dimensional representations are $\pi_1(u,v)$ and $\pi_2(u,v)$ which are defined by
\[ \pi_1(x) = \begin{bmatrix}\i & 0 \\ 0 &\i  \end{bmatrix} \quad \pi_1(y) = \begin{bmatrix} 1 & 0 \\ 0 & -1 \end{bmatrix} \quad  \pi_1(z) = \begin{bmatrix} 0 & \omega \\ \omega & 0\end{bmatrix}
\] and 
\[ \pi_2(x) = \begin{bmatrix} -\i & 0

\\  0 & -\i  \end{bmatrix} \quad \pi_2(y) = \begin{bmatrix} 1 & 0 \\ 0 & -1 \end{bmatrix} \quad  \pi_2(z) = \begin{bmatrix} 0 & \omega \\ \omega & 0 \end{bmatrix},
\]
where $\omega = e^{\frac{2 \pi \i}{8}}$.
The one-dimensional representations for \kashina{7} are $T_{1, \pm 1, \pm 1}(t)$ and $T_{-1, \pm 1, \pm \i}(t)$ while the two-dimensional irreducible representations are defined by
\[ \pi_1(x) = \begin{bmatrix}\i & 0 \\ 0 &\i  \end{bmatrix} \quad \pi_1(y) = \begin{bmatrix} 1 & 0 \\ 0 & -1 \end{bmatrix} \quad  \pi_1(z) = \begin{bmatrix} 0 & 1 \\ -1 & 0\end{bmatrix}
\]
and
\[ \pi_2(x) = \begin{bmatrix} -\i & 0 \\ 0 & -\i \end{bmatrix} \quad \pi_2(y) = \begin{bmatrix} 1 & 0 \\ 0 & -1 \end{bmatrix} \quad  \pi_2(z) = \begin{bmatrix} 0 & -1 \\ 1 & 0 \end{bmatrix}.
\]
The irreducible two-dimensional representations of \twokashinas{5}{7} can be written as
\[
\pi(x)=\begin{bmatrix} \i^a&0\\0&\i^a\end{bmatrix},\quad \pi(y)=\begin{bmatrix}1&0\\0&-1\end{bmatrix},\quad\pi(z)=\begin{bmatrix}0&\omega^b\\\omega^c&0\end{bmatrix},
\]
with $a\in\{1,3\}, b,c\in\{0,1,4\}$ and $\omega=e^{\frac{2\pi\i}{8}}$. 

One computes that when these Hopf algebras act on an algebra $A$ with $A_1 = \pi(u,v)$, then the actions on the degree two elements are given by the following table:
\begin{center}
\begin{tabular}{|c|c|c|c|c| }
\hline
& $u^2$ & $v^2$ & $uv$ & $vu$ \\
\hline $x$ & $-u^2$& $-v^2$ & $-uv$ & $-vu$ \\ \hline
$y$ & $u^2$ & $v^2$ & $-uv$ & $-vu$ \\ \hline
$z$ & $-\i^{c}v^2$ & $\i^b u^2$ & $-\omega^{b+c}{vu}$& ${\omega^{b+c}uv}$\\
\hline
\end{tabular}
\end{center}
Since either $c=b=1$ or one is 0 and the other is 4, it follows that
\begin{align*} \pi(u,v)  \otimes \pi(u,v) &= T_{-1,1,\i^{b+1}}(u ^2 + \i v^2) 
\oplus T_{-1,1,-\i^{b+1}}(u^2 - \i v^2) \\
&\hphantom{=}\oplus T_{-1,-1,\i\omega^{b+c}} (uv + \i vu) \oplus T_{-1,-1,{-\i\omega^{b+c}}} (uv- \i vu).
\end{align*}
Therefore these algebras act on quadratic AS regular algebras of the form $A=\frac{\kk\langle u,v\rangle}{(r)}$ where $A_1=\pi$ and $r=u^2-(-1)^f \i v^2$ or $r=uv-(-1)^e \i vu$, with $e,f\in\{0,1\}$. These actions are inner-faithful by
 Lemma~\ref{C4xC2inn}. 

An induction argument shows that 
\[
z.(u^n)=\omega^{cn+4a\binom{n}{2}}v^n,\quad z.(v^n)=\omega^{nb}u^n.
\]

We first focus on the relation $uv-(-1)^e\i vu$. The action of $y$ implies that any invariant must have even $v$-degree. Let $F$ be an invariant and write $F=\sum\alpha_{l,k}u^lv^{2k}$. The action of $x$ implies that $al+2ak\equiv0 \pmod{4}$, which implies that $l$ is even. Replacing $l$ with $2l$, we get an invariant of the form $\sum\alpha_{l,k}u^{2l}v^{2k}$, satisfying $l\equiv k\pmod{2}$.
Using the formulae above, it follows that
\[
z.(u^{2l}v^{2k})=\i^{kb+cl+2al}u^{2k}v^{2l},
\]
and therefore $F$ must be of the form
\[
F=\sum_{l\equiv k  \ (\mathrm{mod} \ 2)}\alpha_{l,k}(u^{2l}v^{2k}+\i^{kb+cl+2al}u^{2k}v^{2l}),
\]
which can be rewritten as
\[
F=\sum_{k,l \geq 0}\alpha_{l,k}u^{2k}v^{2k}(u^{4l}+(-1)^{cl+k}\i^{k(b+c)}v^{4l}).
\]

A similar computation shows that for the relation $u^2-(-1)^f\i v^2$, every invariant must be of the form
\[
F=\sum_{k,l \geq 0}\alpha_{l,k}u^{4l}((uv)^{2k}+(-1)^{l(c+1)}\i^{(b+c)k}(vu)^{2k}).
\]
The next theorem follows from Remark 1.14, \cite[Lemma 1.10]{FKMW}, and the formulae above.

\begin{theorem} \label{fixedrings57}
Let $H$ be \kashina{5} or \kashina{7}. Then $H$ acts inner-faithfully on the algebras $A=\frac{\kk\langle u,v\rangle}{(r)}$ with $A_1 = \pi = \kk u\oplus\kk v$ an irreducible two-dimensional representation and $r=u^2\pm\i v^2$ or $r=uv\pm\i vu$. The invariant rings for these actions are: 
\begin{center}
\begin{tabular}{|c|c|l| }
\hline
Kashina  & Relation $r$ & Invariant  ring  \\
\hline 
$5$  & $uv\pm\i vu$ & $\kk[u^2v^2,u^4-v^4]$\\ \hline
$5$  & $u^2\pm\i v^2$ & $\kk[u^4,(uv)^2-(vu)^2]$\\ \hline
$7$ & $uv\pm\i vu$ & $\kk[u^4v^4,u^4+v^4,u^2v^2(u^4+v^4)]$ \\ \hline
$7$ & $u^2\pm\i v^2$& $\kk[u^8,(uv)^2+(vu)^2,u^4((uv)^2-(vu)^2)]$ \\ \hline
\end{tabular}
\end{center}

The  invariant rings for \kashina 5 are commutative polynomial rings, hence \kashina{5} is a reflection Hopf algebra for the algebras and actions given while, by Lemma~\ref{notregular}, \kashina{7} is not.
\end{theorem}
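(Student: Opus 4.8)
The proof divides into two tasks: identifying the four invariant rings, and deciding which are AS regular. The inner-faithfulness assertion is immediate: by Lemma~\ref{C4xC2inn}(1) the action of $H$ on the irreducible two-dimensional module $\pi$ is inner-faithful, and since $A_1\cong\pi$ as $H$-modules, any Hopf ideal of $H$ that annihilates $A$ also annihilates $A_1$ and hence is zero. For the first task the plan is to specialize the two ``invariant-form'' formulae established just above the theorem to the concrete parameters attached to each irreducible two-dimensional representation: for \kashina{5} one has $b=c=1$ (so $b+c=2$ and $c$ odd), while for \kashina{7} one has $\{b,c\}=\{0,4\}$ (so $b+c=4$ and $c$ even). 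With these values the scalars $(-1)^{cl+k}\i^{k(b+c)}$ and $(-1)^{l(c+1)}\i^{(b+c)k}$ collapse to signs depending only on the parities of $k$ and $l$, so the general invariant becomes a sum of terms $(u^2v^2)^k(u^{4l}\pm v^{4l})$ for the first family of relations, or $u^{4l}\bigl((uv)^{2k}\pm(vu)^{2k}\bigr)$ for the second.

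From here I would read off finite generating sets. Writing $u^{2k}v^{2k}=(u^2v^2)^k$, the factor $u^{4l}\pm v^{4l}$ is handled by \cite[Lemma 1.10]{FKMW} applied to the pair $u^4,v^4$ (which commute in $A$), giving generators $u^4\mp v^4$ and $u^4v^4=(u^2v^2)^2$; this produces $A^H=\kk[u^2v^2,\,u^4-v^4]$ for \kashina{5} and the three-generator ring of the table for \kashina{7}. For the second family of relations one uses that $u^2$, $(uv)^2$, $(vu)^2$ generate a commutative subalgebra of $\kk\langle u,v\rangle/(u^2-cv^2)$ (the relevant structure is recorded in \cite[Remark 1.14]{FKMW}, together with the monomial basis of Lemma~\ref{lem:u2v2Basis}), inside which $(uv)^2(vu)^2$ is a scalar multiple of $u^8$; applying \cite[Lemma 1.10]{FKMW} to the pair $(uv)^2,(vu)^2$ then reduces every invariant to the generators listed. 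In all four cases one then verifies directly, using the displayed $z$-action formulae $z.u^n=\omega^{cn+4a\binom{n}{2}}v^n$ and $z.v^n=\omega^{nb}u^n$ together with the actions of $x$ and $y$, that the listed elements are $H$-invariant, so the lists are exhaustive.

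Finally I would separate the regular and non-regular cases. For \kashina{5} the two generators are homogeneous of degree $4$; since $A$ is a Noetherian domain of GK-dimension $2$ that is a finite module over $A^H$ (semisimplicity of $H$), $\GKdim A^H=2$, so the two generators cannot be algebraically dependent --- else they would generate a subalgebra of GK-dimension at most $1$. Hence $A^H$ is a commutative polynomial ring in two variables, in particular AS regular, so \kashina{5} is a reflection Hopf algebra for these algebras and actions. For \kashina{7} I would instead show that $A^H$ needs at least three minimal generators: when $r=uv\pm\i vu$ the degree-$8$ component of $A^H$ is three-dimensional while that of the subalgebra generated by the (one-dimensional) space of degree-$4$ invariants is one-dimensional, and when $r=u^2\pm\i v^2$ the element $u^4((uv)^2-(vu)^2)$ is a nonzero invariant in degree $12$, a degree not attained by any product of the two generators of lower degree. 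Since $\GKdim A^H=2$, a three-generated $A^H$ is not AS regular by Lemma~\ref{notregular}. The step I expect to be the main obstacle is the sign bookkeeping in the middle paragraph: carrying the specialized values of $(-1)^{cl+k}\i^{k(b+c)}$ and $(-1)^{l(c+1)}\i^{(b+c)k}$ correctly through the binomial-lemma reductions for both representations and both relation types, and confirming that the resulting generating lists are neither redundant nor incomplete.
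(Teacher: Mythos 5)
Your proposal matches the paper's own (very terse) proof: specialize the displayed invariant-form formulae to $b=c=1$ for \kashina{5} and $\{b,c\}=\{0,4\}$ for \kashina{7}, reduce to the listed generators via \cite[Lemma 1.10 and Remark 1.14]{FKMW}, and settle regularity by recognizing commutative polynomial rings for \kashina{5} versus invoking Lemma~\ref{notregular} for \kashina{7}; your extra GK-dimension argument for algebraic independence of the two degree-$4$ generators is a reasonable way to make the paper's implicit ``hence polynomial rings'' explicit. The one slip is in your final paragraph: $u^4((uv)^2-(vu)^2)$ has degree $8$, not $12$, so the ``unattained degree'' argument fails as stated (degree $12$ is hit by $u^8\cdot((uv)^2+(vu)^2)$ and by $((uv)^2+(vu)^2)^3$), but the dimension count you already use for the $uv\pm\i vu$ case repairs it --- the degree-$8$ invariants $u^8$, $u^4((uv)^2-(vu)^2)$, $(uv)^4+(vu)^4$ span a three-dimensional space while the subalgebra generated by the single degree-$4$ invariant contributes only one dimension there, forcing at least three generators and hence non-regularity by Lemma~\ref{notregular}.
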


\subsection{\threekashinas{6}{10}{11}}
The one-dimensional representations for \kashina{6} are $T_{\pm1, \pm 1, \pm 1}(t)$, where the subscripts indicate the action of $x$, $y$, and $z$, respectively, on the basis element $t$. The irreducible two-dimensional representations are $\pi_1(u,v)$ and $\pi_2(u,v)$ defined by
\[ \pi_1(x) = \begin{bmatrix}\i & 0 \\ 0 &-\i  \end{bmatrix} \quad \pi_1(y) = \begin{bmatrix} 1 & 0 \\ 0 & 1 \end{bmatrix} \quad  \pi_1(z) = \begin{bmatrix} 0 & 1 \\ 1 & 0\end{bmatrix}
\] 
and 
\[ \pi_2(x) = \begin{bmatrix}\i & 0 \\ 0 &-\i  \end{bmatrix} \quad \pi_2(y) = \begin{bmatrix} -1 & 0 \\ 0 & -1 \end{bmatrix} \quad  \pi_2(z) = \begin{bmatrix} 0 & 1 \\ 1 & 0\end{bmatrix}.
\]
The one-dimensional representations for \kashina{10} are $T_{\pm 1, 1, \pm 1}$ and $T_{\pm 1, -1, \pm \i}(t)$ and the irreducible two-dimensional representations are defined by
\[ \pi_1(x) = \begin{bmatrix}\i & 0 \\ 0 &-\i  \end{bmatrix} \quad \pi_1(y) = \begin{bmatrix} 1 & 0 \\ 0 & 1 \end{bmatrix} \quad  \pi_1(z) = \begin{bmatrix} 0 & 1 \\ 1 & 0\end{bmatrix}
\]
and
\[ \pi_2(x) = \begin{bmatrix}\i & 0 \\ 0 &-\i  \end{bmatrix} \quad \pi_2(y) = \begin{bmatrix} -1 & 0 \\ 0 & -1 \end{bmatrix} \quad  \pi_2(z) = \begin{bmatrix} 0 & \i \\ \i & 0\end{bmatrix}.
\]
The one-dimensional representations for \kashina{11} are $T_{\pm 1, 1, \pm 1}(t)$ and $T_{\pm 1, -1, \pm \i}(t)$ and the irreducible two-dimensional representations are defined by
\[ \pi_1(x) = \begin{bmatrix}\i & 0 \\ 0 &-\i  \end{bmatrix} \quad \pi_1(y) = \begin{bmatrix} 1 & 0 \\ 0 & 1 \end{bmatrix} \quad  \pi_1(z) = \begin{bmatrix} 0 & -1 \\ 1 & 0\end{bmatrix}
\]
and
\[ \pi_2(x) = \begin{bmatrix}\i & 0 \\ 0 &-\i  \end{bmatrix} \quad \pi_2(y) = \begin{bmatrix} -1 & 0 \\ 0 & -1 \end{bmatrix} \quad  \pi_2(z) = \begin{bmatrix} 0 & -\i \\ \i & 0 \end{bmatrix}.
\]

The two-dimensional representations for \threekashinas{6}{10}{11} can be written as
\[
\pi(x) = \begin{bmatrix}\i & 0 \\ 0 &-\i  \end{bmatrix} \quad \pi(y) = \begin{bmatrix} (-1)^a & 0 \\ 0 & (-1)^a \end{bmatrix} \quad  \pi(z) = \begin{bmatrix} 0 & \i^c \\ \i^b & 0\end{bmatrix}
\]
for $a\in\{0,1\}$ and $(b,c) \in \{(0,0), (0,2), (1,1), (1,3) \}$. The one-dimensional representations are $T_{(-1)^e,(-1)^g,\i^{2f+\epsilon g}}$ with $e,f,g\in\{0,1\}$  and $\epsilon = 0$ for \kashina{6}, $\epsilon =1$ for \twokashinas{10}{11}.

To compute $\left(\pi(u,v) \oplus T_{(-1)^e,(-1)^g,\i^{2f+\epsilon g}}(t) \right) \otimes \left( \pi(u,v) \oplus T_{(-1)^e,(-1)^g,\i^{2f+\epsilon g}}(t)\right)$ one checks that the actions of $x$, $y$, and $z$ are given by the following tables.
\begin{center}
\begin{tabular}{|c|c|c|c|c| }
\hline
& $u^2$ & $v^2$ & $uv$ & $vu$ \\
\hline $x$ & $-u^2$& $-v^2$ & $uv$ & $vu$ \\ \hline
$y$ & $u^2$ & $v^2$ & $uv$ & $vu$ \\ \hline
$z$ & $(-1)^{a+b}v^2$ & $(-1)^{a+c}u^2$ & $(-1)^a\i^{b+c}vu$& $(-1)^a\i^{b+c}uv$\\
\hline
\end{tabular}
\end{center}

\begin{center}
\begin{tabular}{|c|c|c|c|c|c| }
\hline
 & $ut$& $tu$ & $vt$ & $tv$ & $t^2$ \\
\hline $x$  & $(-1)^e\i ut$ & $(-1)^e\i tu$ & $(-1)^{e+1}\i vt$ & $(-1)^{e+1}\i tv$ & $t^2$\\ \hline
$y$ &  $(-1)^{a+g}ut$ & $(-1)^{a+g}tu$ & $(-1)^{a+g}vt$ & $(-1)^{a+g}tv$ & $t^2$\\ \hline
$z$ & $(-1)^f \i^{b+\epsilon g} vt$ & $(-1)^{f+g}\i^{b+\epsilon g} tv$ & $(-1)^f \i^{c+\epsilon g} ut$ & $(-1)^{f+g}\i^{c+\epsilon g} tu$ & $(-1)^{\epsilon g}t^2$\\
\hline
\end{tabular}
\end{center}

Therefore
\begin{align*}&\left(\pi(u,v) \oplus T_{(-1)^e,(-1)^f,\i^f}(t) \right) \otimes \left( \pi(u,v) \oplus T_{(-1)^e,(-1)^f,\i^f}(t)\right) \\
&=T_{-1,1,(-1)^{a+b+1}}(u ^2 - v^2)\oplus T_{-1,1,(-1)^{a+b}}(u^2 +v^2) \\&\oplus T_{1,1,(-1)^a\i^{b+c}} (uv + vu) \oplus T_{1,1,(-1)^{a+1}\i^{b+c}} (uv-vu) \oplus T_{1,1,(-1)^f}(t^2)\\
&\oplus \text{2 two-dimensional representations.}
\end{align*}

The two two-dimensional representations appearing in the decomposition above are given in the next table; we give the decomposition for a specific $T_{(-1)^e,(-1)^g,\i^{2f+\epsilon g}}$ (namely, $e=0, g=1,f =0$). In the remaining cases, there are similar decompositions.

\begin{center}
\begin{tabular}{|c|c|c|c| }
\hline
Kashina & $\pi$ & $T$ & Decomposition\\
\hline  
6& $\pi_1$ & $T_{1,-1,1}$ & $\pi_2(ut,vt)\oplus\pi_2(tu,-tv)$\\ \hline
6& $\pi_2$ & $T_{1,-1,1}$  & $\pi_1(ut,vt)\oplus\pi_1(tu,-tv)$\\ \hline
10 &$\pi_1$ & $T_{1,-1,\i}$ & $\pi_2(ut,vt)\oplus\pi_2(tu,-tv)$\\ \hline
10 & $\pi_2$ & $T_{1,-1,\i}$ &$\pi_1(ut,-vt)\oplus\pi_1(tu,tv)$\\ \hline
11 & $\pi_1$ & $T_{1,-1,\i}$ & $\pi_2(ut,vt)\oplus \pi_2(tu,-tv)$\\ \hline
11 & $\pi_2$ & $T_{1,-1,\i}$ & $\pi_1(ut,-vt)\oplus\pi_1(tu,tv)$\\ \hline
\end{tabular}
\end{center}


Therefore, \threekashinas{6}{10}{11} act inner-faithfully on quadratic regular algebras of the form $A=\frac{\kk\langle u,v\rangle}{(r)}[t;\sigma]$ with $A_1 = \pi(u,v) \oplus T(t)$, where $\pi = \pi_1$ or $\pi_2$, $T =T_{(-1)^e, -1,\i^{2f+\epsilon }}$, for $e,f \in \{0,1\}$, $\sigma=\begin{bmatrix}\alpha&0\\0&-\alpha\end{bmatrix}$ with $\alpha \in \kk^\times$, and $r=uv-(-1)^qvu$ or $r=u^2-(-1)^pv^2$, with $p,q\in\{0,1\}$.

\begin{lemma} \label{evenTdeg61011}
Let $\pi(u,v) \oplus T(t)$ be a three-dimensional inner-faithful representation
of $H$, where $H$ is one of \threekashinas{6}{10}{11} as in the statement of 
  Lemma~\ref{C4xC2inn},  and let $A$
be a graded $\kk$-algebra with $A_1 = \pi \oplus T$ such that $H$ acts on $A$.
Suppose that $T = T_{\alpha,\beta,\gamma}$.  Let $m$ be a monomial in $u$ and $v$ in
the algebra $A$.  If $x.mt^\ell = y.mt^\ell = mt^\ell$, then $\ell$ is even.
\end{lemma}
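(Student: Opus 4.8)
The plan is to use the $x$-action to force the total degree of $m$ to be even, and then to use the $y$-action, together with the fact that inner-faithfulness of $\pi\oplus T$ forces $\beta=-1$, to conclude that $\ell$ is even.

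First I would record the relevant scalars. From the description of the two-dimensional representations of \threekashinas{6}{10}{11} given above, $x$ acts on $\pi(u,v)$ diagonally with $x.u=\i u$ and $x.v=-\i v$, while $y$ acts on $\pi(u,v)$ by the scalar $(-1)^a$ for some $a\in\{0,1\}$. Moreover every one-dimensional representation $T_{\alpha,\beta,\gamma}$ of these Hopf algebras has $\alpha=x.t\in\{1,-1\}$, and by Lemma~\ref{C4xC2inn}(2) the inner-faithfulness hypothesis forces $\beta=y.t=-1$.

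Next I would carry out the $x$-computation. Let $p$ and $q$ denote the $u$-degree and the $v$-degree of $m$; these are well defined since $A=\frac{\kk\langle u,v\rangle}{(r)}[t;\sigma]$ has a $\kk$-basis of elements $mt^\ell$ with $m$ a monomial in $u$ and $v$ — namely $m=u^pv^q$ when $r=uv-c\,vu$, and one of the monomials of Lemma~\ref{lem:u2v2Basis} when $r=u^2-cv^2$. Since $x$ is grouplike it acts as an algebra automorphism, so using $-\i=\i^{-1}$ we get $x.m=\i^{p-q}m$ and hence $x.(mt^\ell)=\i^{p-q}\alpha^\ell\,mt^\ell$. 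As the $mt^\ell$ form part of a $\kk$-basis of $A$, the hypothesis $x.(mt^\ell)=mt^\ell$ forces $\i^{p-q}\alpha^\ell=1$; since $\alpha^\ell=\pm1$ this gives $\i^{p-q}=\pm1$, so $p-q$ is even, and therefore $\deg m=p+q$ is even as well.

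Finally I would plug this into the $y$-computation. Since $y$ scales $m$ by $(-1)^{a(p+q)}=1$, and $y.t=-1$, we obtain $y.(mt^\ell)=(-1)^\ell\,mt^\ell$, so the hypothesis $y.(mt^\ell)=mt^\ell$ forces $\ell$ to be even, which is the claim. I do not expect any serious obstacle: the argument is short, and the only point that needs care is the well-definedness of the bidegree of a monomial in $u$ and $v$, which the monomial bases cited above provide; the essential input is the equivalence, recorded in Lemma~\ref{C4xC2inn}, between inner-faithfulness of $\pi\oplus T$ and the condition $\beta=-1$.
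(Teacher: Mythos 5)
Your proof is correct and follows essentially the same route as the paper's: use Lemma~\ref{C4xC2inn} to get $\beta=-1$, read off from the $x$-condition that $\deg_u m-\deg_v m$ (hence $\deg m$) is even, and then the $y$-condition forces $\ell$ even. The only cosmetic difference is that the paper splits the $y$-step into the cases $\pi_1$ and $\pi_2$, while you absorb both cases by first noting $\deg m$ is even; your extra remark on the well-definedness of the bidegree is careful but not needed, since $x$ and $y$ act diagonally and hence scale any word in $u,v$ by a scalar determined by the word.
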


\begin{proof}
  One has that $\alpha = \pm 1$ and by Lemma~\ref{C4xC2inn},
  $\pi \oplus T$ is inner-faithful if and only if $\beta = -1$.  If
  $x.mt^\ell = mt^\ell$, then one has that $\deg_u m - \deg_v m$ is
  even.  Furthermore, if $y.mt^\ell = mt^\ell$, then one has that
  $\ell$ is even when $\pi = \pi_1$ and that $\ell + \deg m$ is even
  if $\pi = \pi_2$.  In either case, it follows that $\ell$ is even.
\end{proof}

An inductive argument shows that
\[
z.(u^n)=\i^{cn+2a\binom{n}{2}}v^n,\quad z.(v^m)=\i^{bm+2a\binom{m}{2}}u^m.
\]
We consider the relation $uv-(-1)^qvu$.  By Lemma~\ref{evenTdeg61011},
an invariant must be of the form
$F=\sum\alpha_{n,m,k}u^nv^mt^{2k}$. To be invariant under $x$ we must
have $n\equiv m \pmod{4}$; if this condition is satisfied
then $F$ is also invariant under $y$. It follows from the formulae
above that if $n\equiv m \pmod{4}$, then
\[
z.(u^nv^mt^{2k})=\begin{cases} (-1)^{fk}\i^{(c+b)n}u^mv^nt^{2k}& \text{if $n$ is even}\\
(-1)^{fk+q}\i^{(c+b)n}u^mv^nt^{2k} & \text{if $n$ is odd and $a = 0$}\\
-(-1)^{fk+q}\i^{(c+b)n}u^mv^nt^{2k} & \text{if $n$ is odd and $a = 1$.}\end{cases}
\]
When $n\equiv m \pmod{4}$, we can write this as $z.(u^nv^mt^{2k})=(-1)^{fk+(q+a)n}\i^{(c+b)n}u^mv^nt^{2k}$. Using the formula above it follows that an invariant must be of the form
\[
\sum\alpha_{n,m,k}u^mv^m(u^{4n}+(-1)^{fk+(q+a)m}\i^{(c+b)m}v^{4n})t^{2k}.
\]

Similarly if the relation is $u^2-(-1)^pv^2$, then an invariant must be of the form
\[
\sum\alpha_{n,m,k}u^{4m}((uv)^n+(-1)^{an+fk}\i^{(b+c)n}(vu)^n)t^{2k}.
\]
The next theorem follows from the results in  \cite[Subsection 1.4]{FKMW} and the formulae above.

\begin{theorem}
\label{fixedring61011}
Let $H$ be Kashina \#6, \#10, or \#11. Then $H$ acts inner-faithfully on the algebras $A=\frac{\kk\langle u,v\rangle}{(r)}[t;\sigma]$ with $A_1 = \pi(u,v) \oplus T(t)$ where $\pi$ is an irreducible two-dimensional representation given in the table below, $T$ is a one-dimensional representation with $y.t=-t$, $\sigma=\begin{bmatrix}\alpha&0\\0&-\alpha\end{bmatrix}$, for $\alpha \in \kk^\times$, and $r=uv\pm vu$ or $r=u^2\pm v^2$. The invariant rings for these actions are also given in the table below.

\begin{center}
\begin{tabular}{|c|c|c|l| }
\hline
Kashina & $\pi$ & Relation $r$ & Invariant ring\\
\hline  
6& $\pi_1$ & $uv-vu$ & $\kk[uv,u^4+v^4,t^2]$\\ \hline
6& $\pi_1$ & $uv+vu$ & $\kk[u^2v^2,u^4+v^4,uv(u^4-v^4),t^2]$\\ \hline
6& $\pi_2$ & $uv-vu$ & $\kk[u^2v^2,u^4+v^4,uv(u^4-v^4),t^2]$\\ \hline
6& $\pi_2$ & $uv+vu$ & $\kk[uv,u^4+v^4,t^2]$\\ \hline
10& $\pi_1$ or $\pi_2$ & $uv-vu$ & $\kk[uv,u^4+v^4,(u^4-v^4)t^2,t^4]$\\ \hline
10& $\pi_1$ or $\pi_2$ & $uv+vu$ & $\kk\!\!\left[\begin{array}{ll}u^2v^2,u^4+v^4,uv(u^4-v^4),\\uvt^2,(u^4-v^4)t^2,t^4\end{array}\right]$\\ \hline
11& $\pi_1$ or $\pi_2$ & $uv-vu$ &
$\kk\!\!\left[\begin{array}{ll}u^2v^2,u^4+v^4,uv(u^4-v^4),\\uvt^2,(u^4-v^4)t^2,t^4\end{array}\right]$\\ \hline
11& $\pi_1$ or $\pi_2$ & $uv+vu$ &
$\kk[uv,u^4+v^4,(u^4-v^4)t^2,t^4]$\\ \hline
6& $\pi_1$ & $u^2\pm v^2$ & $\kk[u^4,uv+vu,t^2]$\\ \hline
6& $\pi_2$ & $u^2\pm v^2$ & $\kk[u^4,uv-vu,t^2]$\\ \hline
10& $\pi_1$ or $\pi_2$ & $u^2\pm v^2$ & $\kk[u^4,uv+vu,(uv-vu)t^2,t^4]$\\ \hline
11& $\pi_1$ or $\pi_2$ & $u^2\pm v^2$ & $\kk[u^4,uv-vu,(uv+vu)t^2,t^4]$\\ \hline
\end{tabular}
\end{center}
\kashina{6} is a reflection Hopf algebra in the cases listed in the
first and fourth rows of the table above, as the invariant rings are
Ore extensions of a commutative polynomial ring; it is not a
reflection Hopf algebra in the cases given in the second and third
rows of the table above, as the invariant rings are not AS regular by
Lemma~\ref{notregular}. By Lemma~\ref{notregular} \kashina{10} and \kashina{11} are not
reflection Hopf algebras for any of the algebras and representations
described above.
\end{theorem}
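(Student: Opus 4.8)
The plan is to assemble the theorem from the material already developed in this subsection, so the proof is largely a matter of organizing and citing.

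\textbf{Inner-faithfulness and the module-algebra structure.} This is established in the discussion preceding the theorem: the displayed decomposition of $(\pi(u,v)\oplus T(t))^{\otimes 2}$, together with the table of its two two-dimensional summands, shows that each of $u^2\pm v^2$ and $uv\pm vu$ spans a one-dimensional sub-$H$-module of $A_1\otimes A_1$, so $\kk\langle u,v\rangle/(r)$ is an $H$-module algebra, and that the Ore extension by $\sigma=\mathrm{diag}(\alpha,-\alpha)$ is $H$-equivariant; since every simple $H$-module occurs in $A_1$ or $A_1^{\otimes 2}$, Theorem~\ref{innerfaithful} (equivalently Lemma~\ref{C4xC2inn}, using $y.t=-t$) gives inner-faithfulness. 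This yields the first sentence of the theorem.

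\textbf{The invariant rings.} For each pair $(\pi,r)$ we have already shown, via Lemma~\ref{evenTdeg61011} (forcing even $t$-degree) together with the inductive formulas for the $z$-action on $u^n$ and $v^m$, that every invariant is a $\kk$-linear combination of the explicit elements $u^mv^m(u^{4n}+(-1)^{fk+(q+a)m}\i^{(c+b)m}v^{4n})t^{2k}$ when $r=uv-(-1)^qvu$, and of $u^{4m}((uv)^n+(-1)^{an+fk}\i^{(b+c)n}(vu)^n)t^{2k}$ when $r=u^2-(-1)^pv^2$. The remaining step is to produce, for each row of the table, a finite generating set: substitute the values of $a,b,c,\epsilon$ (and of $(-1)^e,(-1)^f$ appearing in $T$) belonging to that row, and apply the generating lemmas of \cite[Subsection~1.4]{FKMW} --- the commuting-variable identities $X^\ell\pm Y^\ell\in\grp{X\pm Y,\,XY}$ and the lemmas governing the rings $\kk\langle x,y,z,w\rangle/(\cdots,xy-\alpha z^{2k})$ and $\kk_{-1}[x,y]$ --- taking $\{X,Y\}=\{u^4,v^4\}$ or $\{(uv)^2,(vu)^2\}$ (or $\{uv,vu\}$), with $t^2$ in the role of the last variable. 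Conversely each element listed in the table is visibly invariant, so the two subalgebras coincide. One also notes that $t^2$ is a normal element of $A$ (since $\sigma$ is diagonal), so in every case $A^H$ is an Ore extension of the subring of invariants of $t$-degree zero.

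\textbf{Which are AS regular.} In rows $1$ and $4$ of the \kashina{6} block, the invariant ring is $\kk[uv,u^4+v^4][t^2;\tau]$ for a graded automorphism $\tau$ of the commutative polynomial ring $\kk[uv,u^4+v^4]$; since a graded Ore extension of an AS regular algebra is AS regular, $A^H$ is AS regular and \kashina{6} is a reflection Hopf algebra for that $A$. In each of the remaining rows (rows $2$ and $3$ of the \kashina{6} block, and all rows for \kashina{10} and \kashina{11}), the generating set listed has at least four elements; one checks that this set is minimal, so $A^H$ cannot be generated by three elements. As $A^H$ contains three algebraically independent elements and lies inside $A$ with $\GKdim A=3$, we have $\GKdim A^H=3$; were $A^H$ AS regular it would then be generated by at most three elements by Lemma~\ref{notregular}, a contradiction. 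Hence these invariant rings are not AS regular and the corresponding Hopf algebras are not reflection Hopf algebras for the given $A$.

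\textbf{Main obstacle.} The two non-formal points are (i) verifying that the multi-generator invariant rings genuinely require their stated number of generators --- i.e.\ establishing minimality, which needs the explicit Hilbert series of each $A^H$ (computable from the linear bases obtained above, or from a Molien-type trace formula for the $H$-action) --- and (ii) the bookkeeping across the parameters $a,b,c,\epsilon,e,f,p,q$ needed to match every row of the table to the correct generating lemma of \cite[Subsection~1.4]{FKMW} with the correct substitution. Both steps are routine but lengthy, and were checked with NCAlgebra in Macaulay2.
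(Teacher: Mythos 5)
Your proposal follows essentially the same route as the paper: the theorem is obtained by combining the general form of an invariant derived just before the statement (Lemma~\ref{evenTdeg61011} plus the inductive formulas for the $z$-action) with the generating lemmas of \cite[Subsection 1.4]{FKMW}, and the regularity claims are settled by observing that the three-generator rings in rows 1 and 4 are Ore extensions of polynomial rings while the remaining listed rings fail Lemma~\ref{notregular}. Two asides in your write-up are, however, false as stated and should be removed or repaired. First, it is not true that every simple $H$-module occurs in $A_1\oplus A_1^{\otimes 2}$: the one-dimensional summands of $\pi^{\otimes 2}$ and of $T^{\otimes 2}$ all have trivial $y$-action, so the three one-dimensional representations with $y$ acting by $-1$ other than $T$ itself first appear in $A_1^{\otimes 3}$; the inner-faithfulness claim is nonetheless correct and is exactly what Lemma~\ref{C4xC2inn} (via Theorem~\ref{dim2and3InnerFaithful}) delivers, so that citation alone suffices. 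Second, the claim that ``in every case $A^H$ is an Ore extension of the subring of invariants of $t$-degree zero'' fails for \kashina{10} and \kashina{11}: there $t^2$ is not invariant (only $t^4$ is), the invariants of positive $t$-degree involve elements such as $(u^4-v^4)t^2$ satisfying $\bigl((u^4-v^4)t^2\bigr)^2=\bigl((u^4+v^4)^2-4(uv)^4\bigr)t^4$, and if the claim were true those rings would be AS regular, contradicting the theorem's own conclusion. Neither slip is load-bearing, since the actual non-regularity argument you give (minimality of a generating set of size at least four, $\GKdim A^H=3$, Lemma~\ref{notregular}) is the paper's argument; the minimality verification you defer to Hilbert-series bookkeeping is likewise left implicit in the paper.
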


\subsection{\twokashinas{8}{9}}
The one-dimensional representations for \kashina{8} are $T_{\pm 1, 1, \pm 1}(t)$ and $T_{\pm \i, 1, \pm 1}(t)$, where the subscripts indicate the actions of $x$, $y$, and $z$, respectively, on the basis element $t$. The irreducible two-dimensional representations are defined by
\[ \pi_1(x) = \begin{bmatrix}\i & 0 \\ 0 &-\i  \end{bmatrix} \quad \pi_1(y) = \begin{bmatrix} -1 & 0 \\ 0 & -1 \end{bmatrix} \quad  \pi_1(z) = \begin{bmatrix} 0 & 1 \\ 1 & 0\end{bmatrix}
\]
and
\[ \pi_2(x) = \begin{bmatrix}1 & 0 \\ 0 &-1  \end{bmatrix} \quad \pi_1(y) = \begin{bmatrix} -1 & 0 \\ 0 & -1 \end{bmatrix} \quad  \pi_1(z) = \begin{bmatrix} 0 & 1 \\ 1 & 0\end{bmatrix}.
\]
The one-dimensional representations for \kashina{9} are $T_{\pm1,1,\pm1}(t)$ and $T_{\pm \i, 1, \pm 1}(t)$ and the irreducible two-dimensional representations are given by
\[ \pi_1(x) = \begin{bmatrix}\i & 0 \\ 0 &-\i  \end{bmatrix} \quad \pi_1(y) = \begin{bmatrix} -1 & 0 \\ 0 & -1 \end{bmatrix} \quad  \pi_1(z) = \begin{bmatrix} 0 & -1 \\ 1 & 0\end{bmatrix}
\]
and
\[ \pi_2(x) = \begin{bmatrix} 1 & 0 \\ 0 & -1 \end{bmatrix} \quad \pi_1(y) = \begin{bmatrix} -1 & 0 \\ 0 & -1 \end{bmatrix} \quad  \pi_1(z) = \begin{bmatrix} 0 & -1 \\ 1 & 0 \end{bmatrix}.
\]
The irreducible two-dimensional representations for \twokashinas{8}{9} can be written as 
\[
\pi(x)=\begin{bmatrix}\i^a&0\\0&-\i^a\end{bmatrix},\quad \pi(y)=\begin{bmatrix}-1&0\\0&-1\end{bmatrix},\quad \pi(z)=\begin{bmatrix}0&(-1)^b\\1&0\end{bmatrix},
\]
with $a,b\in\{0,1\}$. The one-dimensional representations can be written as $T_{(\i)^c,1,(-1)^d}$ with $c \in \{0,1,2,3\}$ and $d \in \{0,1\}$.

To compute $\left(\pi(u,v) \oplus T_{(\i)^c,1,(-1)^d}(t) \right) \otimes \left( \pi(u,v) \oplus T_{\i^c,1,(-1)^d}(t)\right)$ one checks that
\begin{center}
\begin{tabular}{|c|c|c|c|c| }
\hline
& $u^2$ & $v^2$ & $uv$ & $vu$ \\
\hline $x$ & $(-1)^au^2$& $(-1)^av^2$ & $(-1)^{a+1}uv$ & $(-1)^{a+1}vu$ \\ \hline
$y$ & $u^2$ & $v^2$ & $uv$ & $vu$ \\ \hline
$z$ & $(-1)^{a}v^2$ & $(-1)^{a}u^2$ & $(-1)^{a+b}vu$& $(-1)^{a+b}uv$\\
\hline
\end{tabular}
\end{center}

\begin{center}
\begin{tabular}{|c|c|c|c|c|c| }
\hline
 & $ut$& $tu$ & $vt$ & $tv$ & $t^2$ \\
\hline $x$  & $\i^{a+c} ut$ & $\i^{a+c} tu$ & $ - \i^{a+c} vt$ & $- \i^{a+c} tv$ & $(-1)^ct^2$\\ \hline
$y$ &  $-ut$ & $-tu$ & $-vt$ & $-tv$ & $t^2$\\ \hline
$z$ & $(-1)^{c+d} vt$ & $(-1)^d tv$ & $(-1)^{b+c+ d} ut$ & $(-1)^{b+d} tu$ & $t^2$\\
\hline
\end{tabular}
\end{center}

Therefore we have the decomposition
\begin{align*}&\left(\pi(u,v) \oplus T_{(\i)^c,1,(-1)^d}(t) \right) \otimes \left( \pi(u,v) \oplus T_{(\i)^c,1,(-1)^d}(t)\right) \\
&=T_{(-1)^a,1,(-1)^{a+1}}(u ^2 - v^2)\oplus T_{(-1)^a,1,(-1)^{a}}(u^2 +v^2) \\&\oplus T_{(-1)^{a+1},1,(-1)^{a+b}} (uv + vu) \oplus T_{(-1)^{a+1},1,(-1)^{a+b+1}} (uv-vu) \oplus T_{-1,1,1}(t^2)\\
&\oplus \text{2 two-dimensional representations.}
\end{align*}
The two two-dimensional representations appearing in the decomposition above are given in the next table for $c=1,  d=0$; in the remaining cases there are similar decompositions.

\begin{center}
\begin{tabular}{|c|c|c|c| }
\hline
Kashina & $\pi$  & Decomposition\\
\hline  
8& $\pi_1$  & $\pi_2(vt,-ut)\oplus\pi_2(tv,tu)$\\ \hline
8& $\pi_2$   & $\pi_1(ut,-vt)\oplus\pi_1(tu,tv)$\\ \hline
9 &$\pi_1$  & $\pi_2(vt,ut)\oplus\pi_2(tv,-tu)$\\ \hline
9 & $\pi_2$ &$\pi_1(ut,-vt)\oplus\pi_1(tu,tv)$\\ \hline
\end{tabular}
\end{center}

Therefore \twokashinas{8}{9} act inner-faithfully on quadratic regular algebras of the form $A=\frac{\kk\langle u,v\rangle}{(r)}[t;\sigma]$ where $A = \pi(u,v) \oplus T(t)$ with $\pi = \pi_1$ or $\pi_2$, $T = T_{(-1)^c\i,1,(-1)^d}$ for $c,d \in \{ 0,1\}$, $\sigma=\begin{bmatrix}\alpha&0\\0&-\alpha\end{bmatrix}$ with $\alpha \in \kk^\times$, and $r=uv-(-1)^evu$ or $r=u^2-(-1)^fv^2$, with $e,f\in\{0,1\}$.

\begin{lemma} \label{evenTdeg89}
Let $\pi(u,v) \oplus T(t)$ be a three-dimensional inner-faithful representation
of $H$, where $H$ is one of \twokashinas{8}{9} as in the statement of 
  Lemma~\ref{C4xC2inn},  and let $A$
be a graded $\kk$-algebra with $A_1 = \pi \oplus T$ such that $H$ acts on $A$.
Suppose that $T = T_{\alpha,\beta,\gamma}$.  Let $m$ be a monomial in $u$ and $v$ in
the algebra $A$.  If $x.mt^\ell = y.mt^\ell = mt^\ell$, then $\ell$ is even.
\end{lemma}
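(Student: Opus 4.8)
The plan is to follow the proof of Lemma~\ref{evenTdeg61011} essentially verbatim, the only inputs being the explicit matrices for $\pi$ recorded above together with the shape of $T$ forced by inner-faithfulness. First I would invoke Lemma~\ref{C4xC2inn}(3): since $\pi\oplus T$ is inner-faithful, writing $T = T_{\i^c,1,(-1)^d}$, the integer $c$ must be odd, i.e., $x$ acts on $t$ by $\pm\i$. Next I would observe that $x,y\in\bG(H)$, being grouplike, act on $A$ as grading-preserving algebra automorphisms, and that from the displayed matrices they act diagonally on $A_1 = \pi\oplus T$: one has $x.u = \i^{a}u$, $x.v = -\i^{a}v$, $x.t = \i^{c}t$ and $y.u = -u$, $y.v = -v$, $y.t = t$, where $a\in\{0,1\}$ according to whether $\pi = \pi_1$ or $\pi = \pi_2$. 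Since $A$ is generated in degree $1$, it follows that $x$ and $y$ act diagonalizably on all of $A$, and each monomial is a common eigenvector whose eigenvalue depends only on the numbers of occurrences of $u$, $v$, and $t$.

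Concretely, for a monomial $m$ in $u,v$ with $p$ factors of $u$ and $q$ factors of $v$, the element $mt^{\ell}$ satisfies
\[ y.(mt^{\ell}) = (-1)^{p+q}\,mt^{\ell}, \qquad x.(mt^{\ell}) = \i^{\,a(p+q)+2q+c\ell}\,mt^{\ell}. \]
Now I would impose the two hypotheses. The relation $y.(mt^{\ell}) = mt^{\ell}$ forces $p+q$ to be even; substituting this into $x.(mt^{\ell}) = mt^{\ell}$, both $a(p+q)$ and $2q$ are even, so the congruence $a(p+q)+2q+c\ell\equiv 0\pmod 4$ forces $c\ell$ to be even. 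As $c$ is odd, $\ell$ must be even, which is exactly the assertion.

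I do not expect a genuine obstacle here; the content is a short weight computation. The one point that merits a sentence of justification is that the $x$- and $y$-eigenvalue of a monomial is well defined in $A$, i.e., unchanged by rewriting $m$ using the defining relation: this holds because each admissible relation $uv\pm vu$ or $u^2\pm v^2$ spans a one-dimensional $x$- and $y$-stable subspace (as is visible from the decompositions of $\pi\otimes\pi$ computed above), so the eigenspace decomposition descends to $A$. It is also worth noting that the cases $\pi=\pi_1$ and $\pi=\pi_2$ need not be separated, since the argument only uses that $a(p+q)$ is even once $p+q$ is, and this holds for $a\in\{0,1\}$ alike.
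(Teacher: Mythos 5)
Your argument is correct and follows the same route as the paper's proof: use Lemma~\ref{C4xC2inn}(3) to force $x.t = \pm\i\, t$, note that the $y$-condition makes $\deg m$ even, and then read off from the $x$-eigenvalue congruence $a(p+q)+2q+c\ell \equiv 0 \pmod 4$ that $\ell$ must be even. The extra remark on why the monomial eigenvalues are well defined modulo the defining relation is a harmless addition the paper leaves implicit.
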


\begin{proof}
  One has that $\beta = 1$ and by Lemma~\ref{C4xC2inn}, $\pi \oplus T$
  is inner-faithful if and only if $\alpha = \pm \i$.  For any two-dimensional representaiton $\pi$, $y$
  scales both $u$ and $v$ by $-1$/
 Hence, if $y.mt^\ell = mt^{\ell}$ then $\deg m$, the total degree of $m$ in $u$ and $v$, is even.  The action of $x$ on $\pi$ is given by
  $x.u = \i^au$ and $x.v = -\i^av$ for $a = 0$ or $a = 1$.  Then
  $x.mt^\ell = mt^\ell$ implies that $2\deg_v m + a\deg m \pm \ell$ is
  a multiple of 4.  Since $\deg m$ is even, it follows that $\ell$
  must be even as well.
\end{proof}

An induction argument shows that 
\[
z.(u^n)=(-1)^{a\binom{n}{2}}v^n,\quad z.(v^n)=(-1)^{a\binom{n}{2}+bn}u^n.
\]
The previous two formulae imply that if $n\equiv m\pmod{2}$ then
\[
z.(u^nv^m)=(-1)^{a\frac{n+m}{2}+bm}v^nu^m.
\]
We first assume that the relation is $uv-(-1)^evu$.  By Lemma
\ref{evenTdeg89}, an invariant $F$ must be of the form
$F=\sum\alpha_{k,l,m}u^kv^lt^{2m}$. In order for $F$ to be invariant under
$y$, we must have $k\equiv l\pmod{2}$,
while in order to be invariant under $x$, we must have
$ak+(a+2)l+2m\equiv0\pmod{4}$. To be invariant under $z$,
$F$ must be of the form
\[
\sum_{\substack{k\equiv l \ (\mathrm{mod} \ 2)\\ak+(a+2)l+2m\equiv0  \ (\mathrm{mod} \ 4)}}\alpha_{k,l,m}(u^kv^l+(-1)^{a\frac{l+k}{2}+l(b+e)}u^lv^k)t^{2m}.
\]
Replacing $2m$ with $4m$ or $4m+2$ we get that $F$ must be of the form
\begin{align*}
\sum_{\substack{k\equiv l  \ (\mathrm{mod} \ 2)\\ak+(a+2)l\equiv 0  \ (\mathrm{mod} \ 4)}}\alpha_{k,l,m}(u^kv^l+(-1)^{l(1+b+e)}u^lv^k)t^{4m}+\\
\sum_{\substack{k\equiv l  \ (\mathrm{mod} \ 2)\\ak+(a+2)l \equiv2  \ (\mathrm{mod} \ 4)}}\alpha_{k,l,m}(u^kv^l-(-1)^{l(1+b+e)}u^lv^k)t^{4m+2}.
\end{align*}
One can rewrite the previous sum as
\[
F=
\begin{cases}
\displaystyle\sum_{j\equiv m \ (\mathrm{mod} \ 2)}\alpha_{j,l,m}u^jv^j(u^{2l}+(-1)^{(b+e)j}v^{2l})t^{2m}\quad \text{ for } a=0\\
\displaystyle\sum_{l\equiv m \ (\mathrm{mod} \ 2)}\alpha_{j,l,m}u^jv^j(u^{2l}+(-1)^{k(1+b+e)+l}v^{2l})t^{2m}\quad \text{ for }a=1.
\end{cases}
\]

For the relation $u^2-(-1)^fv^2$ one can prove by induction the formulae
\[
z.((vu)^n)=(-1)^{n(a+b)}(uv)^n,\quad z.((uv)^n)=(-1)^{n(a+b)}(vu)^n,
\]
and proceeding as for the relation $uv-(-1)^evu$ it follows that an invariant must be of the form
\[
F=
\begin{cases}
\displaystyle\sum_{m\equiv k \ (\mathrm{mod} \ 2)}\alpha_{n,m,k}u^{2n}((vu)^m+(-1)^{nf+mb}(uv)^m)t^{2k}\quad \text{ for } a=0\\
\displaystyle\sum_{n\equiv k \ (\mathrm{mod} \ 2)}\alpha_{n,m,l}u^{2n}((vu)^m+(-1)^{n(f+1)+m(b+1)}(uv)^m)t^{2k}\quad \text{ for } a=1.
\end{cases}
\]

Using the formulae above we can prove the following

\begin{theorem}
\label{fixedrings89}
Let $H$ be \kashina{8} or \kashina{9}. Then $H$ acts inner-faithfully on the algebras $A=\frac{\kk\langle u,v\rangle}{(r)}[t;\sigma]$ with $A_1 \ = \pi(u,v) \oplus T$ where $\pi$ is an irreducible two-dimensional representation given in the table below, $T$ is a one-dimensional representation with $x.t = \pm \i t$, $y.t=t,$ and  $\sigma=\begin{bmatrix}\alpha&0\\0&-\alpha\end{bmatrix}$, with $\alpha \in \kk^\times$, and $r=uv\pm vu$ or $r=u^2\pm v^2$. The invariant rings for these actions are given in the table below.
\begin{center}
\begin{tabular}{|c|c|c|l| }
\hline
Kashina & $\pi$ & Relation & Invariant ring\\
\hline  
8& $\pi_1$ & $uv-vu$ & $\kk\!\!\left[ \begin{array}{ll}u^2v^2,u^4+v^4,uv(u^4-v^4),\\uv(u^2+v^2)t^2,(u^2-v^2)t^2,t^4\end{array}\right]$\\ \hline
8& $\pi_1$ & $uv+vu$ & $\kk[uv,u^4+v^4,(u^2-v^2)t^2,t^4]$\\ \hline
8& $\pi_1$ & $u^2-v^2$ & $\kk[u^4,uv-vu,u^2(uv+vu)t^2,t^4]$\\ \hline
8& $\pi_1$ & $u^2+v^2$ & $\kk[u^4,uv-vu,u^2t^2,t^4]$\\ \hline
8& $\pi_2$ & $uv-vu$ & $\kk[u^2v^2,u^2+v^2,uvt^2,t^4]$\\ \hline
8& $\pi_2$ & $uv+vu$ & $\kk[u^2v^2,u^2+v^2,uv(u^2-v^2)t^2,t^4]$\\ \hline
8& $\pi_2$ & $u^2-v^2$ & $\kk[u^2,(uv)^2+(vu)^2,(uv+vu)t^2,t^4]$\\ \hline
8& $\pi_2$ & $u^2+v^2$ &
$\kk\!\!\left[ \begin{array}{ll}u^4,(uv)^2+(vu)^2,u^2((uv)^2-(vu)^2),\\u^2(uv-vu)t^2,(uv+vu)t^2,t^4\end{array}\right]$\\ \hline
9& $\pi_1$ & $uv-vu$ & $\kk[uv,u^4+v^4,(u^2-v^2)t^2,t^4]$\\ \hline
9& $\pi_1$ & $uv+vu$ & $\kk\!\!\left[\begin{array}{ll}u^2v^2,u^4+v^4,uv(u^4-v^4),\\(u^2-v^2)t^2,uv(u^2+v^2)t^2,t^4\end{array}\right]$\\ \hline
9& $\pi_1$ & $u^2-v^2$ & $\kk[u^4,uv+vu,u^2(uv-vu)t^2,t^4]$\\ \hline
9& $\pi_1$ & $u^2+v^2$ & $\kk[u^4,uv+vu,u^2t^2,t^4]$\\ \hline
9& $\pi_2$ & $uv-vu$ & $\kk[u^2v^2,u^2+v^2,uv(u^2-v^2)t^2,t^4]$\\ \hline
9& $\pi_2$ & $uv+vu$ & $\kk[u^2v^2,u^2+v^2,uvt^2,t^4]$\\ \hline
9& $\pi_2$ & $u^2-v^2$ &
$\kk[u^2,(uv)^2+(vu)^2,(uv-vu)t^2,t^4]$\\ \hline
9& $\pi_2$ & $u^2+v^2$ &
$\kk\!\!\left[ \begin{array}{ll}u^4,(uv)^2+(vu)^2,u^2((uv)^2-(vu)^2),\\u^2(uv+vu)t^2,(uv-vu)t^2,t^4\end{array}\right]$\\ \hline
\end{tabular}
\end{center}
~\\
Hence, by Lemma~\ref{notregular}, \kashina{8} and \kashina{9} are not reflection Hopf algebras for any of these actions.
\end{theorem}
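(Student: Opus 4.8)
The plan is to treat the theorem in three stages: inner-faithfulness, computation of the fixed rings, and the failure of AS regularity. Inner-faithfulness is essentially already in hand. The decomposition tables computed just before the statement show that for each choice of relation $r \in \{uv\pm vu,\ u^2\pm v^2\}$, each $\sigma = \operatorname{diag}(\alpha,-\alpha)$, and each admissible $T = T_{(-1)^c\i,1,(-1)^d}$, the Hopf algebra $H$ (\kashina 8 or \#9) does act on $A = \kk\langle u,v\rangle/(r)[t;\sigma]$ with $A_1 \cong \pi(u,v)\oplus T(t)$; here one only needs to check that the $z$-action recorded in the tables is compatible with the Ore relations $tu = \alpha u t$, $tv = -\alpha v t$, which is immediate. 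Since $T$ has $x$-eigenvalue $\pm\i$, Lemma~\ref{C4xC2inn}(3) gives that the action is inner-faithful, and Lemma~\ref{evenTdeg89} supplies the parity constraint used below.

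For the fixed rings, I would start from the explicit normal form for invariants derived immediately before the theorem: every invariant is a $\kk$-linear combination of symmetrized monomials of the shape $u^jv^j(u^{2l}\pm v^{2l})t^{2m}$ in the $uv\pm vu$ cases and $u^{2n}((vu)^m\pm(uv)^m)t^{2k}$ in the $u^2\pm v^2$ cases, with the sign dictated by the parities of the exponents and by the parameters $a$ (so $a=1$ for $\pi_1$, $a=0$ for $\pi_2$) and $b$ (so $b=0$ for \#8, $b=1$ for \#9). For each of the sixteen rows of the table I would substitute the relevant parameters and then apply the binomial-type generation lemmas of \cite[Subsection~1.4]{FKMW}, working inside $\kk\langle u,v\rangle/(r)[t;\sigma]$, where (depending on $r$ and $\sigma$) $u^2$ and $v^2$, or $uv$ and $vu$, commute up to a sign and $t^2$ is central, to collapse the infinite normal form onto the finite generating set listed. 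The reverse inclusion, that each listed generator is invariant, is read off directly from the action tables, or simply from the fact that each listed generator already appears in the derived normal form.

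Finally, to conclude that these rings are not AS regular, and hence that \kashina 8 and \kashina 9 are not reflection Hopf algebras for any of the listed actions, I would show that $A^H$ cannot be generated by three or fewer elements; since $\GKdim A^H = \GKdim A = 3$, Lemma~\ref{notregular} then applies. This amounts to checking that the generating set listed in each row is minimal, which follows from a short degree-and-leading-term inspection after computing $\dim_\kk (A^H)_n$ for small $n$ from the normal form, together with the observation that in every row the set has at least four elements.

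The real work, and the main obstacle, is the surjectivity part of the middle stage: showing uniformly across all sixteen sign-bookkeeping cases that the finite sets listed really do generate all of the invariants. The cleanest route is a downward induction on total degree using the normal form together with the binomial lemmas, but one must track the commutation signs in $\kk\langle u,v\rangle/(r)[t;\sigma]$ carefully --- in particular whether $u^2$ and $v^2$ commute or anticommute, which depends jointly on $r$ and $\sigma$ --- since this determines which binomial identity is the relevant one. Verifying minimality in the five- and six-generator rows is routine but tedious.
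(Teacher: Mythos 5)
Your proposal follows essentially the same route as the paper: the paper establishes inner-faithfulness via Lemma~\ref{C4xC2inn} and the decomposition tables, derives the normal form for invariants case by case, and then combines the generation lemmas of \cite[Subsection 1.4]{FKMW} with explicit inductive identities (e.g.\ rewriting $(u^{4n}+v^{4n})(u^2-v^2)t^2$ to peel off $u^2v^2$ factors) to reduce the infinite normal form to the listed finite generating sets, concluding with Lemma~\ref{notregular} exactly as you do. The only difference is one of emphasis: you make explicit the minimality check needed before invoking Lemma~\ref{notregular}, which the paper leaves implicit.
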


\begin{proof}
The calculation of these fixed rings is a combination of the results in  \cite[Subsection 1.4]{FKMW} and induction.

For the first row of the table (when $H$ is \kashina{8}, $A_1$ is the representation $\pi_1$, and $A$ is defined by the relation $uv-vu$), every invariant must be of the form
\[
\sum\alpha_{j,n,k}u^jv^j(u^{4n}+(-1)^jv^{4n})t^{4k}+\sum\beta_{j,n,k}u^jv^j(u^{4n+2}-(-1)^jv^{4n+2})t^{4k+2},
\]
which is the same as for the tenth row of the table (\kashina{9} , $\pi_1$, relation
$uv+vu$).  By \cite[Remark 1.12]{FKMW}
taking $x = u^4$, $y = -v^4$ and $z = uv$,
  the first sum in this invariant is generated by
  $u^2v^2,u^4+v^4,uv(u^4-v^4)$ and $t^4$.    
We claim that to generate the
second sum one only needs $(u^2-v^2)t^2,uv(u^2+v^2)t^2$. Indeed since
$u^2v^2$ and $t^4$ are among the generators one needs to show only
that the elements $(u^{4n+2}-v^{4n+2})t^2$ and
$uv(u^{4n+2}+v^{4n+2})t^2$ are generated. We induct on $n$. For the
first element it suffices to notice that
\[
(u^{4n}+v^{4n})(u^2-v^2)t^2=(u^{4n+2}-v^{4n+2})t^2-u^2v^2(u^{4n-2}-v^{4n-2})t^2,
\]
and for the second
\[
(u^{4n}+v^{4n})uv(u^2+v^2)t^2=uv(u^{4n+2}+v^{4n+2})t^2+u^3v^3(u^{4n-2}+v^{4n-2})t^2.
\]
For \kashina{8}, $\pi_1$, and  relation $uv+vu$ an invariant must be of the form 
\[
\sum\alpha_{j,n,k}u^jv^j(u^{4n}+v^{4n})t^{4k}+\sum\beta_{j,n,k}u^jv^j(u^{4n+2}-v^{4n+2})t^{4k+2},
\]
which is the same as for \kashina{9}, $\pi_1$, and relation $uv-vu$. By  \cite[Lemma 1.10 (1)]{FKMW} the first sum in this invariant is generated by $uv,u^4+v^4,t^4$. We claim that to generate the second sum one only needs $(u^2-v^2)t^2$. It suffices to show that the element $(u^{4n+2}-v^{4n+2})t^2$ can be generated. This follows by induction and from the equality
\[
(u^{4n}+v^{4n})uv(u^2-v^2)t^2=(u^{4n+2}-v^{4n+2})t^2-u^2v^2(u^{4n-2}-v^{4n-2})t^2.
\]
For \kashina{8}, $\pi_1$, and  relation $u^2-v^2$ an invariant must be of the form
\[
\sum\alpha_{l,j,n}u^{4l}((uv)^j+(-1)^j(vu)^j)t^{4n}+\sum\beta_{l,j,n}u^{4l+2}((uv)^j-(-1)^j(vu)^j)t^{4n+2}.
\]
By the results in  \cite[Subsection 1.4]{FKMW}, the first sum is generated by $u^4,uv-vu,t^4$. We claim that the invariant $u^2(uv+vu)t^2$ is sufficient to generate the second sum. It suffices to prove by induction that one can generate the element $u^2((uv)^j-(-1)^j(vu)^j)t^2$. This follows from the equality
\begin{align*}
((uv)^j+(-1)^j(vu)^j)u^2(uv+vu)t^2&=u^2((uv)^{j+1}-(-1)^{j+1}(vu)^{j+1})t^2+\\
&+u^6((uv)^{j-1}-(-1)^{j-1}(vu)^{j-1})t^2.
\end{align*}
All the remaining cases are proved similarly, and therefore we omit the proofs.
\end{proof}

\section{$\bG(H) = D_8$}\label{d8}
To study \twokashinas{12}{13}, which are the Hopf algebras $H$ with $\bG(H) = D_8$ (the dihedral group of order 8), we use the fact that they are both cotwists of group algebras: 
\[ H_{C:1} \cong (\kk D_{16})^J \quad \mbox{and} \quad  H_E \cong (\kk \textit{SD}_{16})^J
\]
where $D_{16} = \langle a, b \mid a^8 = b^2 = 1, ba = a\inv b \rangle$ and $\textit{SD}_{16} = \langle a, b \mid a^8 = b^2 = 1, ba = a^3b \rangle$ denotes the semidihedral (quasidihedral) group of order 16.
See \cite[pp. 658--659]{kashina}.

Both $D_{16}$ and $\textit{SD}_{16}$ have a subgroup $F$ isomorphic to the Klein-4 group, in both cases given by $\{1,a^4, b, a^4 b\}$. The specific dual cocycle $J \in F \otimes F$ is given in \cite[Section 7]{kashina}. Let $c = a^4$. Then
\begin{align*}
J &= \delta_1 \otimes \delta_1 + \delta_1 \otimes \delta_{c} + \delta_1 \otimes \delta_b + \delta_1 \otimes \delta_{c b} 
+ \delta_{c} \otimes \delta_1 + \delta_{c} \otimes \delta_{c} + \i \delta_{c} \otimes \delta_b - \i \delta_{c} \otimes \delta_{c b} \\
&+ \delta_{b} \otimes \delta_1 - \i \delta_{b} \otimes \delta_{c} + \delta_{b} \otimes \delta_b + \i \delta_{b} \otimes \delta_{c b} 
+ \delta_{c b} \otimes \delta_1 + \i \delta_{cb} \otimes \delta_{c} - \i \delta_{cb} \otimes \delta_b + \delta_{cb} \otimes \delta_{c b} 
\end{align*}
where
\begin{align*}
\delta_1 &= \frac{1}{4} ( 1 + c + b + cb) &
\delta_{c} &= \frac{1}{4} ( 1 + c - b - cb)\\
\delta_{b} &= \frac{1}{4} ( 1 - c + b - cb) &
\delta_{c b} &= \frac{1}{4} ( 1 - c - b + cb)
\end{align*}
\subsection{Kashina \#12}

There are three non-isomorphic irreducible two-dimensional representations of $D_{16}$, namely
\[ \pi_j(a) = \begin{bmatrix} \omega^{j} & 0 \\ 0 & \omega^{-j} \end{bmatrix} \quad \pi_j(b) = \begin{bmatrix}0 & 1 \\ 1 & 0 \end{bmatrix}
\]
where $j = 1,2,3$ and $\omega = e^{2 \pi \i / 8}$. There are four one-dimensional representations $T_{\pm 1, \pm 1}(t)$ where the subscript indicates the action on $a$ and $b$ respectively, on $t$.

If $\kk D_{16}$ acts on an algebra $A$ where $A_1=\pi_2$ then the two-sided ideal generated by $1-a^4$ is a Hopf ideal that annihilates this algebra, and therefore the action is not inner-faithful (one can check that $a^4$ is group-like). Both $\pi_1$ and $\pi_3$ are inner-faithful since
\[\pi_1(u,v) \otimes \pi_1(u,v) = \pi_3(u,v) \otimes \pi_3(u,v) =  \pi_2(u^2,v^2) \oplus T_{1,1}(uv+vu) \oplus T_{1,-1}(uv-vu),\]
\[
\pi_2(u,v)\otimes\pi_2(u,v)=T_{1,1}(uv+vu)\oplus T_{1,-1}(uv-vu)\oplus T_{-1,1}(u^2+v^2)\oplus T_{-1,-1}(u^2-v^2).
\]

Therefore, $\kk D_{16}$ acts inner-faithfully on $\kk[u,v]$ and $\kk_{-1}[u,v]$. Computations similar to the ones performed for the previous Hopf algebras in this paper show that $\kk[u,v]^{\kk D_{16}}=\kk[uv,u^8+v^8]$, which is AS regular since, by the Chevalley--Shephard--Todd Theorem, $D_{16}$ is a reflection group for $\kk[u,v]$. On the other hand, $\kk_{-1}[u,v]^{\kk D_{16}}=\kk[u^2v^2,u^8+v^8,uv(u^8-v^8)]$ is not AS regular since, by \cite[Theorem 2.1]{CKWZ}, $\kk D_{16}$ acts with trivial homological determinant on $\kk_{-1}[u,v]$ so, by \cite[Theorem 2.3]{CKWZ}, the fixed ring is not AS regular.

By Lemma~\ref{lem.cocycle}, the only two-dimensional AS regular algebras that \kashina{12} acts on inner-faithfully are $\kk[u,v]_J$ and $\kk_{-1}[u,v]_J$. By the same lemma it follows that the fixed rings are the same.

Let $A=\kk[u,v]_J$ with $A_1 = \pi_1$ or $\pi_3$. Let $*$ denote the multiplication in $A$. Since $c=a^4$ acts as $-1$ and $b$ swaps $u$ and $v$, we have that
\begin{align*}\delta_1 . u &= 0 &  \delta_1 .v &= 0\\
\delta_c.u &= 0 &  \delta_c.v &= 0 \\
\delta_b.u &= \frac{1}{2}(u+v) & \delta_b.v &= \frac{1}{2}(u+v)\\
\delta_{cb}.u &= \frac{1}{2}(u-v) & \delta_{cb}.v &= -\frac{1}{2}(u-v)
\end{align*}
and therefore
\begin{align*}u*u &=  \sum (J^{(1)} . u)(J^{(2)} .u) = \frac{1}{2}(u^2+v^2) &
 u*v &= uv - \frac{\i}{2}(u^2-v^2)\\
 v*u &= uv + \frac{\i}{2}(u^2-v^2) &
 v*v &= \frac{1}{2}(u^2+v^2).\end{align*}
 Using these equations,  with $X = \frac{1}{2}(u + v)$ and $Y = \frac{1}{2}(u - v)$,
one can check that $XY + YX = 0$.  Since  taking a cocycle twist of a graded algebra preserves the Hilbert series,
there can  be only a single quadratic relation in $A$.  It follows that $A \cong \kk_{-1}[X,Y]$.  Then 
\[  uv = \frac{1}{4}\left((u+v)*(u+v) - (u-v)*(u-v)\right) = X^2 - Y^2,
\]
one can also directly check that 
\[ u^8 + v^8 = (X^2-Y^2)^4+32X^2Y^2(X^2+Y^2)^2.
\]

Let $B=\kk_{-1}[u,v]_J$, with $A_1=\pi_1$ or $\pi_3$. Let $*$ denote the multiplication in $B$. Then one can check that
\begin{align*} u*u &= \frac{1}{2}(u^2+v^2)-\i uv &
 u*v &= -\frac{\i}{2}(u^2-v^2)\\
 v*u &= \frac{\i}{2}(u^2-v^2) &
 v*v &=\frac{1}{2}(u^2+v^2)+\i uv.
\end{align*}
By setting $X=\frac{1}{2}(u+v)$ and $Y=\frac{1}{2}(u-v)$ it follows that $B\cong \frac{\kk\langle X,Y\rangle}{(X^2-Y^2)}$. Direct computations show that 
\begin{align*} 
u^2v^2&=2X^4+(XY)^2+(YX)^2,\\
u^8+v^8&=140X^8-56X^5(YX)Y-56X^4(YX)^2+2X(YX)^3Y+2(YX)^4,\\
uv(u^8-v^8)&=84\i(X^9Y+X^8(YX))-54\i(X^5(YX)^2Y+X^4(YX)^3)\\&\;\;\;+2\i(XY)^4Y+(YX)^5)
\end{align*}
We notice that
\[
u^8+v^8+14(u^2v^2)^2=16(14X^8+(XY)^4+(YX)^4)
\]

From the previous results we can deduce the following
\begin{theorem} \label{fixedring12}
Let $H$ be \kashina{12}. Then $H$ acts inner-faithfully on the algebra $\frac{\kk\langle X,Y\rangle}{(r)}$ with $r=XY+YX$ or $r=X^2-Y^2$, and
\begin{align*}
a.X=\frac{\sqrt2}{2}(\pm X+\i Y),\quad b.X=X\\
a.Y=\frac{\sqrt2}{2}(\i X\pm Y),\quad b.Y=-Y.
\end{align*}
The invariant rings are 
\begin{center}
\begin{tabular}{|c|l| }
\hline
Relation & Invariant ring\\\hline  
$XY+YX$& $\kk[X^2-Y^2,X^2Y^2(X^2+Y^2)^2]$ \\\hline
$X^2-Y^2$ & $\kk\!\!\left[ \begin{array}{ll}2X^4+(XY)^2+(YX)^2,\\14X^8+(XY)^4+(YX)^4,\\42X^8(XY+YX)-27X^4((XY)^3+(YX)^3)\\+((XY)^5+(YX)^5)\end{array}\right]$\\\hline
\end{tabular}
\end{center}
The first invariant ring is a commutative polynomial ring, hence AS regular, while by Lemma~\ref{notregular}, the second is not AS regular.
\end{theorem}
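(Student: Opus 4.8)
The plan is to deduce the theorem from the computations in the paragraphs preceding it, merely transporting them from the $(u,v)$-presentations of the cotwists to the $(X,Y)$-presentations. First I would record the action of the algebra generators of $H=(\kk D_{16})^J$ on $X$ and $Y$. Since $H$ and $\kk D_{16}$ agree as algebras and act on $\kk[u,v]_J$ (respectively $\kk_{-1}[u,v]_J$) through the same underlying linear action on the degree-one space, and since $u=X+Y$, $v=X-Y$, the formulas for $a.X,a.Y,b.X,b.Y$ follow at once by substituting the matrices $\pi_j(a)=\mathrm{diag}(\omega^{j},\omega^{-j})$ and $\pi_j(b)$ (the transposition of the two basis vectors) for $j\in\{1,3\}$ and using $\omega+\omega\inv=\sqrt2$, $\omega-\omega\inv=\i\sqrt2$; the sign $\pm$ in the statement is exactly the difference between $\pi_1$ and $\pi_3$. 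Inner-faithfulness of both actions is then immediate from Lemma~\ref{lem.cocycle}(6) together with the inner-faithfulness of $\kk D_{16}$ on $\kk[u,v]$ and on $\kk_{-1}[u,v]$ established above; and the identifications $\kk[u,v]_J\cong\frac{\kk\langle X,Y\rangle}{(XY+YX)}$ and $\kk_{-1}[u,v]_J\cong\frac{\kk\langle X,Y\rangle}{(X^2-Y^2)}$ (the latter AS regular by Lemma~\ref{lem:u2v2Basis}) were already obtained above from the explicit multiplication table of the cotwist and the invariance of the Hilbert series under cocycle twists.

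Next I would pin down the invariant rings. By Lemma~\ref{lem.cocycle}(5) the ring of invariants of $H$ on $\kk[u,v]_J$ is, as a ring, precisely $\kk[u,v]^{\kk D_{16}}=\kk[uv,\,u^8+v^8]$, and the ring of invariants of $H$ on $\kk_{-1}[u,v]_J$ is precisely $\kk_{-1}[u,v]^{\kk D_{16}}=\kk[u^2v^2,\,u^8+v^8,\,uv(u^8-v^8)]$, both computed above; all that remains is to rewrite their generators in the $(X,Y)$-coordinates of the cotwist. In the first case $uv=X^2-Y^2$ and $u^8+v^8=(X^2-Y^2)^4+32X^2Y^2(X^2+Y^2)^2$, so modulo the subring $\kk[X^2-Y^2]$ one may replace $u^8+v^8$ by $X^2Y^2(X^2+Y^2)^2$, producing $\kk[X^2-Y^2,\,X^2Y^2(X^2+Y^2)^2]$; because $X^2$ and $Y^2$ are central in $\kk_{-1}[X,Y]$ this is a subalgebra of the commutative polynomial ring $\kk[X^2,Y^2]$, and its two generators are algebraically independent (being the images of the algebraically independent invariants $uv$ and $u^8+v^8$), hence it is a commutative polynomial ring and so AS regular---as it must be, $D_{16}$ being a reflection group for $\kk[u,v]$. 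In the second case the displayed identities give $u^2v^2=2X^4+(XY)^2+(YX)^2$; the relation $u^8+v^8+14(u^2v^2)^2=16\bigl(14X^8+(XY)^4+(YX)^4\bigr)$ lets $u^8+v^8$ be replaced, modulo $\kk[u^2v^2]$, by $14X^8+(XY)^4+(YX)^4$; and $uv(u^8-v^8)$ equals $2\i$ times $42X^8(XY+YX)-27X^4\bigl((XY)^3+(YX)^3\bigr)+(XY)^5+(YX)^5$, so the last generator is a nonzero scalar multiple of that element. This yields the stated generating set. That this second ring is not AS regular is inherited directly: by Lemma~\ref{lem.cocycle}(5) it is literally the ring $\kk_{-1}[u,v]^{\kk D_{16}}$, shown above via \cite[Theorems 2.1 and 2.3]{CKWZ} not to be AS regular; equivalently, as the theorem asserts, it has GK-dimension $2$ yet requires three minimal generators (in degrees $4$, $8$, $10$), so Lemma~\ref{notregular} applies.

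The only step with genuine computational content is the verification of the polynomial identities expressing $uv$, $u^2v^2$, $u^8\pm v^8$ and $uv(u^8-v^8)$ in terms of $X$ and $Y$ inside the two cotwisted algebras; these are routine but lengthy manipulations, carried out in $\kk_{-1}[X,Y]$ and in $\frac{\kk\langle X,Y\rangle}{(X^2-Y^2)}$ using the monomial basis of Lemma~\ref{lem:u2v2Basis}, and they are the one place where care is required. Once those identities and Lemma~\ref{lem.cocycle} are in hand, everything else is formal.
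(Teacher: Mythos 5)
Your proposal is correct and follows essentially the same route as the paper: transport the $\kk D_{16}$-invariants of $\kk[u,v]$ and $\kk_{-1}[u,v]$ through the cotwist via Lemma~\ref{lem.cocycle}, identify the twisted algebras with $\kk_{-1}[X,Y]$ and $\kk\langle X,Y\rangle/(X^2-Y^2)$ under $X=\tfrac12(u+v)$, $Y=\tfrac12(u-v)$, and rewrite the generators using the displayed polynomial identities. The only genuine content beyond the paper's text is your explicit verification of the action of $a$ and $b$ on $X,Y$ and of the scalar relating $uv(u^8-v^8)$ to the third listed generator, both of which check out.
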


\subsection{Kashina \#13}

There are three non-isomorphic irreducible two-dimensional representations of $\textit{SD}_{16}$, namely
\[ \pi_1(a) = \begin{bmatrix} \omega & 0 \\ 0 & \omega^{3} \end{bmatrix} \quad \pi_1(b) = \begin{bmatrix}0 & 1 \\ 1 & 0 \end{bmatrix}
\]
\[ \pi_2(a) = \begin{bmatrix} \i & 0 \\ 0 & -\i \end{bmatrix} \quad \pi_2(b) = \begin{bmatrix}0 & 1 \\ 1 & 0 \end{bmatrix}
\]
\[ \pi_3(a) = \begin{bmatrix} \omega\inv & 0 \\ 0 & \omega^{-3} \end{bmatrix} \quad \pi_3(b) = \begin{bmatrix}0 & 1 \\ 1 & 0 \end{bmatrix}
\]
where $\omega = e^{2 \pi \i / 8}$. There are four one-dimensional representations $T_{\pm 1, \pm 1}$ where the subscript indicates the action on $a$ and $b$ respectively. Arguing as for \kashina{12} we deduce that both $\pi_1$ and $\pi_3$ are faithful group representations of $\textit{SD}_{16}$ while $\pi_2$ is not.
Hence, $\kk \textit{SD}_{16}$ acts inner-faithfully on two generated algebras $A$ where $A_1 = \pi_1$ or $\pi_3$. We have that
\[\pi_1(u,v) \otimes \pi_1(u,v) = \pi_2(u^2,v^2) \oplus T_{-1,1}(uv+vu) \oplus T_{-1,-1}(uv-vu)\]
\[ \pi_3(u,v) \otimes \pi_3(u,v) = \pi_2(v^2,u^2) \oplus T_{-1,1}(uv+vu) \oplus T_{-1,-1}(uv-vu).
\]
Note that $\pi_1(u,v) \otimes \pi_3(u,v) = \pi_2(vu,uv) \oplus T_{1,1}(u^2+v^2) \oplus T_{1,-1}(u^2-v^2)$ so $\pi_1$ and $\pi_3$ are not isomorphic.

Therefore the two generated AS regular algebras that $\kk \textit{SD}_{16}$ acts inner-faithfully on are $\kk[u,v]$ and $\kk_{-1}[u,v]$. 

We first compute $\kk[u,v]^{\kk \textit{SD}_{16}}$. Let $F=\sum\alpha_{n,m}u^nv^m$ be an invariant. If $b.F=F$ then $\alpha_{n,m}=\alpha_{m,n}$. If $a.F=F$ then $n\equiv 5m\pmod{8}$. Therefore an invariant must be of the form
\[
F=\sum_{5m+8k\geq0}\alpha_{k,m}(u^{5m+8k}v^m+u^mv^{5m+8k}).
\]
We show that $F$ is generated by the elements $u^2v^2,uv(u^4+v^4),u^8+v^8$. If $4m+8k\geq0$ then we can write $F$ as
\[
F=\sum_{5m+8k\geq0}\alpha_{k,m}u^mv^m(u^{4m+8k}+v^{4m+8k}),
\]
if $m=2h$ for some $h$ then $u^mv^m$ can be generated by $u^2v^2$, while $(u^8)^{h+k}+(v^8)^{h+k}$ can be generated by $u^8+v^8$ and $u^8v^8$ by \cite[Lemma 1.10(1)]{FKMW}. If $m=2h+1$ then it suffices to generate $uv(u^{8(h+k)+4}+v^{8(h+k)+4})$. Indeed
\begin{align*}
uv(u^4+v^4)(u^{8(h+k)}+v^{8(h+k)})&=uv(u^4v^{8(h+k)}+v^4u^{8(h+k)})\\
&+uv(u^{8(h+k)+4}+v^{8(h+k)+4}),
\end{align*}
if $h+k=0$ then we are done, otherwise we write the first summand as $$u^4v^4uv(u^{8(h+k)-4}+v^{8(h+k)-4})$$ which we can generate by induction. If $4m+8k\leq0$ then a similar argument shows that 
\[
F=\sum_{5m+8k\geq0}\alpha_{k,m}u^{5m+8k}v^{5m+8k}(u^{-4m-8k}+v^{-4m-8k})
\]
can be generated by $u^2v^2,uv(u^4+v^4),u^8+v^8$. This fixed ring is not AS regular since $\textit{SD}_{16}$ is not a classical reflection group.

An analogous proof shows that $\kk_{-1}[u,v]^{\kk \textit{SD}_{16}}=\kk[u^2v^2,uv(u^4-v^4),u^8+v^8]$, which is not AS regular.

Using the same change of variables done for \kashina{12} one can check that in $\kk[u,v]$
\begin{align*}
u^2v^2&=(X^2-Y^2)^2,\\
uv(u^4+v^4)&=2(X^6-Y^6+5X^2Y^2(X^2-Y^2)),\\
u^8+v^8&=(X^2-Y^2)^4+32X^2Y^2(X^2+Y^2)^2.
\end{align*}
While in $\kk_{-1}[u,v]$ one has
\begin{align*}
u^2v^2&=2X^4+(XY)^2+(YX)^2,\\
uv(u^4-v^4)&=8X^2((XY)^2-(YX)^2),\\
u^8+v^8&=140X^8-56X^5(YX)Y-56X^4(YX)^2+2X(YX)^3Y+2(YX)^4.
\end{align*}
From the previous results we can deduce the following
\begin{theorem} \label{fixedring13}
Let $H$ be \kashina{13}. Then $H$ acts inner-faithfully on the algebra $\frac{\kk\langle X,Y\rangle}{(r)}$ with $r=XY+YX$ or $r=X^2-Y^2$, and
\begin{align*}
a.X=\pm\i Y,\quad b.X=X\\
a.Y=\pm\i X,\quad b.Y=-Y.
\end{align*}
The invariant rings are 
\begin{center}
\begin{tabular}{|c|l| }
\hline
Relation & Invariant ring\\\hline  
$XY+YX$& $\kk\!\!\left[\begin{array}{ll}(X^2-Y^2)^2,X^2Y^2(X^2+Y^2)^2\\X^6-Y^6+5X^2Y^2(X^2-Y^2)\end{array}\right]$ \\\hline
$X^2-Y^2$ & $\kk\!\!\left[ \begin{array}{ll}2X^4+(XY)^2+(YX)^2,\\14X^8+(XY)^4+(YX)^4,\\X^2((XY)^2-(YX)^2)\end{array}\right]$\\\hline
\end{tabular}
\end{center}
By Lemma~\ref{notregular}, neither of the invariant rings is AS regular.
\end{theorem}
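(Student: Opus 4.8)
The plan is to reduce the whole statement to the group algebra $\kk \textit{SD}_{16}$ via the cotwist presentation of Kashina~\#13 as $(\kk \textit{SD}_{16})^J$ recorded above, and then transport the answers through Lemma~\ref{lem.cocycle}. First I would determine the inner-faithful two-dimensional representations. The group $\textit{SD}_{16}$ has exactly three irreducible two-dimensional representations $\pi_1,\pi_2,\pi_3$; since $a^4$ is a grouplike element acting trivially on $\pi_2$, the Hopf ideal generated by $1-a^4$ (Lemma~\ref{HopfIdeal}) annihilates $\pi_2$, so $\pi_2$ is not inner-faithful, whereas $\pi_1$ and $\pi_3$ are faithful and hence inner-faithful. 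Because cotwisting leaves the algebra structure unchanged and sends Hopf ideals to Hopf ideals (see the proof of Lemma~\ref{lem.cocycle}), the inner-faithful modules of Kashina~\#13 coincide with those of $\kk \textit{SD}_{16}$; combined with the tensor-square decompositions listed above, whose only one-dimensional summands are spanned by $uv\pm vu$, this shows the only two-generated quadratic AS regular algebras on which $\kk \textit{SD}_{16}$ acts inner-faithfully and homogeneously are $\kk[u,v]$ and $\kk_{-1}[u,v]$. By Lemma~\ref{lem.cocycle}, the corresponding algebras for Kashina~\#13 are the cotwists $\kk[u,v]_J$ and $\kk_{-1}[u,v]_J$, with the same invariant subrings.

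Next I would compute $\kk[u,v]^{\kk \textit{SD}_{16}}$ and $\kk_{-1}[u,v]^{\kk \textit{SD}_{16}}$ by hand. Writing an invariant as $\sum\alpha_{n,m}u^nv^m$, invariance under $b$ forces $\alpha_{n,m}=\alpha_{m,n}$ and invariance under $a$ forces $n\equiv 5m \pmod 8$ (one checks this is the same congruence for $\pi_1$ and $\pi_3$, so the two choices give the same fixed ring), so every invariant is a $\kk$-combination of the symmetrized monomials $u^{5m+8k}v^m+u^mv^{5m+8k}$. An induction on $k$, splitting on the parity of $m$ and using $u^2v^2$, $u^8+v^8$, and $uv(u^4\pm v^4)$ to raise exponents, then shows these symmetrized monomials are generated by $\{u^2v^2,\,uv(u^4\pm v^4),\,u^8+v^8\}$; the $\kk_{-1}[u,v]$ computation is identical after tracking signs. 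Since each fixed ring has three minimal generators while an AS regular algebra of GK dimension two is generated by two elements (Lemma~\ref{notregular}), neither is AS regular.

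Finally I would push the picture through the cotwist as in the Kashina~\#12 subsection. Cotwisting preserves the Hilbert series, so $\kk[u,v]_J$ and $\kk_{-1}[u,v]_J$ are quadratic algebras with a single defining relation; computing the twisted products of $X=\tfrac12(u+v)$ and $Y=\tfrac12(u-v)$ shows $X\ast Y+Y\ast X=0$ in the first case and $X\ast X-Y\ast Y=0$ in the second, identifying the algebras with $\kk\langle X,Y\rangle/(XY+YX)$ and $\kk\langle X,Y\rangle/(X^2-Y^2)$ respectively, and the action of $a$ and $b$ on $X,Y$ is obtained by rewriting $\pi_1$ (equivalently $\pi_3$) in the new coordinates. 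Re-expressing $u^2v^2$, $uv(u^4\pm v^4)$, and $u^8+v^8$ in $X,Y$ via the $\ast$-products then yields the generating sets in the theorem, and non-regularity is inherited since three minimal generators persist under the isomorphisms (Lemma~\ref{notregular}).

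The step I expect to be the main obstacle is the inductive verification that $u^2v^2$, $uv(u^4\pm v^4)$, and $u^8+v^8$ already generate every symmetrized monomial with $n\equiv 5m\pmod 8$ --- i.e.\ that nothing extra is hiding in the high-exponent part of the fixed ring --- together with carrying out the change-of-variables identities without sign errors; everything else is a formal consequence of Lemmas~\ref{lem.cocycle} and~\ref{notregular}.
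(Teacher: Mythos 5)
Your proposal follows essentially the same route as the paper: untwist to $\kk\, \textit{SD}_{16}$, rule out $\pi_2$ via the Hopf ideal generated by $1-a^4$, use the tensor-square decompositions to pin down $\kk[u,v]$ and $\kk_{-1}[u,v]$, derive the congruence $n\equiv 5m\pmod 8$ and the symmetrized monomials, verify generation by $u^2v^2$, $uv(u^4\pm v^4)$, $u^8+v^8$ by induction, and transport everything through the cotwist with $X=\tfrac12(u+v)$, $Y=\tfrac12(u-v)$ using Lemma~\ref{lem.cocycle} and Lemma~\ref{notregular}. The step you flag as the main obstacle is exactly the induction the paper carries out, so the argument is complete as outlined.
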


\section{$\bG(H) = C_2 \times C_2$} \label{c2Xc2}
Kashinas \#14-16 have $\bG(H) = C_2 \times C_2$. \kashina 14 and \kashina 16 were
considered in \cite{FKMW}, and we include those results at the end of this section.

\kashina{15} (see \cite[pp. 659--660]{kashina}) is a smash coproduct $\kk Q_8 \#^{\alpha} \kk C_2$ where
$Q_8 = \langle a, b \mid a^4 = 1, b^2 = a^2, ba = a\inv b \rangle$ and $C_2 = \{1,g\}$.
The map $\alpha: C_2 \to \Aut(\kk Q_8)$ where $\alpha_g = \alpha(g)$ is defined by $\alpha_g(a) = b$ and $\alpha_g(b) = a$.

Since the algebra structure on $H$ is simply that of $\kk Q_8 \otimes \kk C_2$,  we may present $H$ as generated
by $a,b,g$ subject to the relations
$$a^4 = 1, b^2 = a^2, ba = a^3b, gb = ag, ga = bg.$$
To describe the rest of the Hopf algebra structure, let $\delta_1 = (1+g)/2$ and $\delta_g = (1-g)/2$.
The comultiplication, antipode, and counit are given by the following (using Sweedler notation):
\begin{align*}
\Delta (x\delta_{g^k}) &= \sum_{r+t = k}(x_{(1)}\delta_{g^r}) \otimes (\alpha_{g^r}(x_{(2)})\delta_{g^t}), \\
S(x\delta_{g^k}) &= \alpha_{g^k}(S(x))\delta_{g^{-k}}, \\
\epsilon (x\delta_{1}) &= \epsilon(x), \\
\epsilon (x\delta_{g}) &= 0
\end{align*}
for all $x \in \kk Q_8$. One can then check that the coproduct of \kashina{15} satisfies:
\begin{align*}
 \Delta(a) &= \frac{1}{2}(a \otimes a + ag \otimes a + a \otimes b - ag \otimes b) \\
 \Delta(b) &= \frac{1}{2}(b \otimes b + bg \otimes b + b \otimes a - bg \otimes a)  \\
 \Delta(g) &= g \otimes g.
\end{align*}
The group of grouplikes is $\{1,g,a^2,a^2g\}$.  There are eight
one-dimensional irreducible representations given by
$T_{\pm 1, \pm 1, \pm 1}$ where the subscript indicates the action on
$a$, $b$, and $g$ respectively. In the Grothendieck ring, we have
\begin{equation} \label{eqn:K15oneDimlTensor}
\begin{aligned}
T_{\alpha, \beta, 1}  \otimes T_{\alpha', \beta', \gamma} &  = T_{\alpha \alpha', \beta \beta', \gamma} \\
T_{\alpha, \beta, -1} \otimes T_{\alpha', \beta', \gamma} &  = T_{\alpha \beta', \alpha'\beta, -\gamma}.
\end{aligned}
\end{equation}

There are two non-isomorphic two-dimensional irreducible representations, given by
the following matrices, for $i \in \{1,2\}$:
\[
\pi_i(a) = \begin{bmatrix} 0 & \i \\ \i & 0 \end{bmatrix} \quad  \pi_i(b) = \begin{bmatrix} 0 & -1 \\ 1 & 0 \end{bmatrix} \quad \pi_i(g) = \begin{bmatrix} (-1)^{i+1} & 0 \\ 0 & (-1)^{i+1}\end{bmatrix}.
\]
The action of $H$ on $\pi_1(u,v)^{\otimes 2} \oplus \pi_2(u,v)^{\otimes 2}$ is:
\begin{center}
\begin{tabular}{|c||c|c|c|c||c|c|c|c|}
\hline
& \multicolumn{4}{c||}{$\pi_1^{\otimes 2}$} & \multicolumn{4}{c|}{$\pi_2^{\otimes 2}$} \\ \cline{2-9}
& $u^2$ & $v^2$ & $uv$ & $vu$ & $u^2$ & $v^2$ & $uv$ & $vu$ \\ \hline
$a$ & $-v^2$& $-u^2$ & $-vu$ & $-uv$ & $\i v^2$& $- \i u^2$ & $-\i vu$ & $\i uv$\\ \hline
$b$ & $v^2$ & $u^2$ & $-vu$ & $-uv$ & $\i v^2$ & $-\i u^2$ & $\i vu$ & $-\i uv$\\ \hline
$g$ & $u^2$ & $v^2$ & $uv$& $vu$ & $u^2$ & $v^2$ & $uv$& $vu$\\
\hline
\end{tabular}
\end{center}
Therefore one has decompositions:
\begin{align*}
\pi_1(u,v) \otimes \pi_1(u,v) =& ~T_{1,1,1}(uv-vu) \oplus T_{1,-1,1}(u^2-v^2) \oplus \\
                               & ~T_{-1,1,1}(u^2+v^2) \oplus T_{-1,-1,1}(uv+vu) \\
\pi_2(u,v) \otimes \pi_2(u,v) =& ~T_{1,1,1}(u^2 + \i v^2) \oplus T_{1,-1,1}(uv - \i vu) \oplus \\
                               & ~T_{-1,1,1}(uv + \i vu) \oplus T_{-1,-1,1}(u^2 - \i v^2)
\end{align*}
One may also check that $\pi_i \otimes T_{\beta,\gamma,1} \cong \pi_i$ for $i = 1,2$ and
$\beta,\gamma \in \{\pm 1\}$.  It follows that \kashina{15} does not act inner-faithfully on any
two-generated algebras.  

The action of \kashina{15} on the representation
\begin{align*}
  (\pi_1(u,v) \otimes T_{\beta,\gamma,-1}) \oplus (T_{\beta,\gamma,-1} \otimes \pi_1(u,v)) \oplus \\
  (\pi_2(u,v) \otimes T_{\beta,\gamma,-1}) \oplus (T_{\beta,\gamma,-1} \otimes \pi_2(u,v))
\end{align*}
is given in the table below:
\begin{center}
\begin{tabular}{|c||c|c|c|c||c|c|c|c||}
\hline
 & \multicolumn{4}{c||}{$(\pi_1 \otimes T) \oplus (T \otimes \pi_1)$} 
 & \multicolumn{4}{c||}{$(\pi_2 \otimes T) \oplus (T \otimes \pi_2)$} \\ \cline{2-9}
    & $ut$         & $vt$          & $tu$        & $tv$        & $ut$        & $vt$        & $tu$        & $tv$   \\ \hline
$a$&$\beta\i vt$&$\beta\i ut$&$\beta tv$&$-\beta tu$&$\gamma\i vt$&$\gamma\i ut$&$\beta tv$&$-\beta tu$ \\ \hline
$b$&$\gamma vt$&$-\gamma ut$&$\gamma\i tv$&$\gamma\i tu$&$\beta vt$&$-\beta ut$&$\gamma\i tv$&$\gamma\i tu$\\ \hline
$g$ & $-ut$        & $-vt$         & $-tu$        & $-tv$          & $ut$         & $vt$         & $tu$    & $tv$ \\
\hline
\end{tabular}
\end{center}


One therefore has the following decompositions:
\begin{align*}
\pi_1(u,v) \otimes T_{\beta,\gamma,-1} &=
\begin{cases}
\pi_2(ut,\beta vt) & \beta = \gamma \\
\pi_2(vt,\beta ut) & \beta \neq \gamma
\end{cases} \\
T_{\beta,\gamma,-1} \otimes \pi_1(u,v) &= 
\begin{cases}
\pi_2(tv,\beta\i tu) & \beta = \gamma \\
\pi_2(tu,-\beta\i tv) & \beta \neq \gamma
\end{cases} \\
\pi_2(u,v) \otimes T_{\beta,\gamma,-1} &=
\begin{cases}
\pi_1(ut,\beta vt) & \beta = \gamma \\
\pi_1(vt,-\beta ut) & \beta \neq \gamma
\end{cases} \\
T_{\beta,\gamma,-1} \otimes \pi_2(u,v) &= 
\begin{cases}
\pi_1(tv,\beta\i tu) & \beta = \gamma \\
\pi_1(tu,-\beta\i tv) & \beta \neq \gamma.
\end{cases}
\end{align*}
 It follows that \kashina{15} acts inner-faithfully on $A = \frac{\kk \langle u, v \rangle}{(r)}[t;\sigma]$, where
$\sigma = \begin{bmatrix} 0 & \i  \alpha  \\ \alpha & 0 \end{bmatrix}$ for a nonzero scalar $\alpha$,
$r = uv \pm vu$ and $A_1$ is the representation $\pi_1(u,v) \oplus T_{\beta,\gamma,-1}$.
Note that $\sigma$ does not induce an automorphism on the base of the Ore extension
when $r = u^2 \pm v^2$. 

\kashina{15} also acts inner-faithfully on $A = \frac{\kk \langle u, v \rangle}{(r)}[t;\sigma]$, where
$\sigma = \begin{bmatrix} 0 & \beta\gamma\alpha\i  \\ \alpha & 0 \end{bmatrix}$ for a nonzero scalar $\alpha$,
$r = u^2 \pm \i v^2$ and $A_1$ is the representation $\pi_2(u,v) \oplus T_{\beta,\gamma,-1}(t)$.
Note that $\sigma$ does not induce an automorphism on the base of the Ore extension
when $r = uv \pm \i vu$. 

\begin{lemma} \label{lem:K15evenTdeg}
Suppose $H$ is \kashina{15} and $H$ acts inner-faithfully on $A$ where $A_1 = \pi_i(u,v) + T(t)$ where $i \in \{1,2\}$ and $T$ is a one-dimensional representation. Then the $t$-degree of any invariant must be even.
\end{lemma}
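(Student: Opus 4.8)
The plan is to extract the parity of the $t$-degree from the action of a single grouplike element. Recall $\bG(H) = \{1, g, a^2, a^2g\}$, and every grouplike acts on $A$ as a graded algebra automorphism. Since $\pi_i(g) = (-1)^{i+1}I$ and $\pi_i(a^2) = \pi_i(a)^2 = (\i I)^2 = -I$, the grouplike
\[
h = \begin{cases} g & i = 1,\\ a^2 g & i = 2\end{cases}
\]
acts as the identity on $\pi_i$. Writing the one-dimensional summand as $T = T_{\beta,\gamma,\delta}$ (so $\beta,\gamma,\delta \in \{\pm 1\}$), the element $h$ acts on the basis vector $t$ by $\delta$ in either case, because $a^2$ acts on $t$ by $T(a^2) = \beta^2 = 1$.

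Next I would rule out $\delta = 1$. If $\delta = 1$, then $h$ acts as the identity on $A_1 = \pi_i \oplus T$; since $h$ is grouplike it respects products, and $A$ is generated in degree one, so $h$ then acts as the identity on all of $A$. Consequently the two-sided ideal generated by $1-h$, which is a Hopf ideal by Lemma~\ref{HopfIdeal} and is nonzero (as $h$ acts by $-1$ on $\pi_{3-i}$ or on $\pi_1$ respectively, hence $h \neq 1$), annihilates $A$, contradicting that $A$ is an inner-faithful $H$-module. Therefore $\delta = -1$, i.e.\ $h.u = u$, $h.v = v$, and $h.t = -t$.

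Finally, I would fix the monomial $\kk$-basis of $A = \frac{\kk\langle u,v\rangle}{(r)}[t;\sigma]$, so that the $t$-degree of a basis monomial is defined. Since $h$ is an algebra automorphism fixing $u$ and $v$ and negating $t$, it scales a basis monomial $m$ by $(-1)^{\ell(m)}$, where $\ell(m)$ is its $t$-degree; equivalently, the eigenspace decomposition of $A$ under $h$ refines to $A = A^{+}\oplus A^{-}$ with $A^{\pm}$ spanned by the monomials of even, respectively odd, $t$-degree. An invariant $F$ satisfies $h.F = \epsilon(h)F = F$, so comparing coefficients against the basis forces every monomial occurring in $F$ to lie in $A^{+}$, i.e.\ to have even $t$-degree.

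I do not anticipate a real obstacle; the only step needing a little care is the reduction $\delta = -1$, where one must be careful that $h$ acting trivially on the generating space $A_1$ genuinely forces the Hopf ideal $\langle 1-h\rangle$ to annihilate all of $A$ — this uses both that $h$ is grouplike (so it is multiplicative) and that $A$ is generated in degree one.
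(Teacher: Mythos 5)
Your proof is correct, and for the harder case $i=2$ it takes a genuinely different route from the paper. The paper handles $i=1$ exactly as you do (via $g$), but for $i=2$ it argues by contradiction on a nonzero invariant $ft^{h}$ with $h$ odd: it first shows $\kk f$ is a one-dimensional subrepresentation (using the explicit coproduct of $a$ together with the fact that $A$ is a domain), and then derives a contradiction in the Grothendieck ring, since $g$ acts by $-1$ on $\kk t^{h}$ while every one-dimensional summand of a tensor power of $\pi_2$ has trivial $g$-action. Your argument replaces all of this with the single grouplike $a^2g \in \bG(H) = \{1,g,a^2,a^2g\}$, which acts as the identity on $\pi_2$ (since $\pi_2(a^2) = \pi_2(g) = -I$) and, by your inner-faithfulness reduction, by $-1$ on $t$; an eigenspace comparison on the monomial basis of the Ore extension then finishes it. This is cleaner and more uniform: it avoids both the domain hypothesis on $A$ and the Grothendieck-ring computation, and it makes explicit (via Lemma~\ref{HopfIdeal}) why the hypothesis of inner-faithfulness forces $g.t = -t$, something the paper absorbs into the surrounding discussion rather than the lemma's proof. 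The only blemish is notational: $\pi_i(a)$ is $\i$ times the swap matrix, not $\i I$, so the intermediate expression ``$(\i I)^2$'' is wrong as written, though the conclusion $\pi_i(a^2) = -I$ is correct.
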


\begin{proof}
  If $i = 1$, then the claim is immediate from the action of $g$.
  Suppose therefore that $i = 2$ and 
  let $ft^h$ be a nonzero invariant with $f$ in the
  subalgebra generated by $u$ and $v$ and $h$ odd.
  
  Suppose that $\kk f$ is a representation of $H$.
    Then $\kk f$ must be the multiplicative inverse of the
    representation $\kk t^h$ in the Grothendieck ring.  No such
  nonzero $f$ exists, however, since $g$ acts as $-1$ on $\kk t^h$,
  and all one-dimensional representations that appear as a summand of
  a tensor power of $\pi_2$ have trivial action of $g$ (cf. Equation
  \eqref{eqn:K15oneDimlTensor}).

   It remains to prove that $\kk f$ is a
    representation of $H$.  Note that in the case under consideration,
    $g.f = (-1)^{\deg f + h}$, so that $h$ and $\deg f$ must have the
    same parity.  Therefore, we may assume that $\deg f$ is odd.

  We may now show that $\kk f$ is a representation of $H$.  Indeed,
  \begin{eqnarray*}
    ft^h = a.(ft^h) & = & \frac{1}{2}((a.f)(a.t^h) + (ag.f)(a.t^h) + (a.f).(b.t^h) - (ag.f)(b.t^h) \\
                    & = & \frac{1}{2}((a.f)(a.t^h) - (a.f)(a.t^h) + (a.f).(b.t^h) + (a.f)(b.t^h) \\
                    & = & (a.f)(b.t^h).
  \end{eqnarray*}
  Since $A$ is a domain, it follows that $a.f$ is a scalar multiple of $f$.  One
  may similarly show that $b.f$ is a scalar multiple of $f$, completing the proof.
\end{proof}

We now compute the invariants for the action of $H$ on $A=\frac{\kk\langle u,v\rangle}{(uv\pm vu)}[t;\sigma]$ with $A_1=\pi_1(u,v)\oplus T_{(-1)^p,(-1)^q,-1}$. Since $g$ acts trivially on $u,v,t^2$ it follows that $a,b$ act as group-like elements on $u,v,t^2$, therefore
\[
a.(u^nv^mt^{2h})=\i^{n+m+2(p+q)h}v^nu^mt^{2h},\quad b.(u^nv^mt^{2h})=(-1)^{m+(p+q)h}v^nu^mt^{2h}.
\]
Let $F=\sum\alpha_{n,m,h}u^nv^mt^{2h}$ be an invariant (the $t$-degree of $F$ must be even by Lemma~\ref{lem:K15evenTdeg}). The actions of $a$ and $b$ imply that if $\alpha_{n,m,h}\neq0$ then $\i^{n+m+2(p+q)h}=(-1)^{m+(p+q)h}$, which implies that $n\equiv m\pmod{4}$. Now arguing as in the previous Kashinas it follows that if $r=uv-vu$ then an invariant must be of the form
\[
F=\sum\alpha_{n,m,h}u^nv^n(u^{4m}+(-1)^{m+(p+q)h}v^{4m})t^{2h},
\]
and if $r=uv+vu$ then
\[
F=\sum\alpha_{n,m,h}u^nv^n(u^{4m}+(-1)^{(p+q)h}v^{4m})t^{2h}.
\]

We now compute the invariants for the action of $H$ on $A=\frac{\kk\langle u,v\rangle}{(u^2\pm\i v^2)}[t;\sigma]$ with $A_1=\pi_2(u,v)\oplus T_{(-1)^p,(-1)^q,-1}$. An induction argument shows
\[
a.(u^n)=\i^{n-\frac{n(n-1)(2n-1)}{6}}v^n,\quad b.(u^n)=\i^{\frac{n(n-1)(2n-1)}{6}}v^n,
\]
\[
a.(vu)^n=\i^n(uv)^n,\quad b.(vu)^n=\i^{3n}(uv)^n,
\]
\[
a.(uv)^n=\i^{3n}(vu)^n,\quad b.(uv)^n=\i^n(vu)^n,
\]
\[
a.(t^n)=(-1)^{(p+q)\frac{n(n-1)}{2}+pn}t^n,\quad b.(t^n)=(-1)^{(p+q)\frac{n(n-1)}{2}+qn}t^n.
\]
By Lemma~\ref{lem:K15evenTdeg} the degree of $t$ in an invariant must be even, therefore an invariant must be of the form
\[
F=\sum\alpha_{n,m,h}u^n(vu)^mt^{2h}+\sum\beta_{l,j,k}u^l(vu)^jvt^{2k},
\]
the action of $g$ implies that $n$ must be even and $l$ must be odd, therefore replacing $n$ with $2n$ and $l$ with $2l+1$ the invariant $F$ can be written as
\[
F=\sum\alpha_{n,m,h}u^{2n}(vu)^mt^{2h}+\sum\beta_{l,j,k}u^{2l}(uv)^{j+1}t^{2k}.
\]
Since $g$ acts trivially on monomials of even degree it follows that $a$ and $b$ act group-like on products of monomials of even degree, therefore using the formulae above it follows that
\[
a.(u^{2n}(vu)^mt^{2h})=\i^{-n+2n^2+m+2(p+q)h}v^{2n}(uv)^mt^{2h},
\]
\[
b.(u^{2n}(vu)^mt^{2h})=\i^{-n+2n^2+3m+2(p+q)h}v^{2n}(uv)^mt^{2h}.
\]
By equating the exponent of the $\i's$ modulo 4 it follows that $m$ must be even. Similarly it follows that $j$ must be odd. Now arguing as for the previous Kashinas it follows that if $r=u^2-\i v^2$ then an invariant must be of the form
\[
F=\sum\alpha_{n,m,h}u^{2n}((uv)^{2m}+(-1)^{m+(p+q)h}(vu)^{2m})t^{2h},
\]
and if $r=u^2+\i v^2$ then
\[
F=\sum\alpha_{n,m,h}u^{2n}((uv)^{2m}+(-1)^{n+m+(p+q)h}(vu)^{2m})t^{2h}.
\]

Using the results from  \cite[Subsection 1.4]{FKMW} we have the following

\begin{theorem} 
\label{fixedring15}
Let $H$ be \kashina{15}. Then $H$ acts inner-faithfully on the algebras $A$ and $B$ given below:

\begin{enumerate}
\item $A=\frac{\kk\langle u,v,\rangle}{(uv\pm vu)}[t;\sigma]$ with $A_1 = \pi(u,v) \oplus T(t)$ where $\pi$ is an irreducible two-dimensional representation with $g.u=u$, $T$ is a one-dimensional representation with $g.t=-t$, and $\sigma = \begin{bmatrix} 0 & \i  \alpha  \\ \alpha & 0 \end{bmatrix}$ for $\alpha \in \kk^\times$.
\item $B=\frac{\kk\langle u,v,\rangle}{(u^2\pm\i v^2)}[t;\sigma]$ with $B_1 = \pi(u,v) \oplus T(t)$ where $\pi$ is an irreducible two-dimensional representation with $g.u=-u$, $T$ is a one-dimensional representation with $a.t=(-1)^pt,  b.t=(-1)^qt,g.t=-t$ for $p,q \in \{0, 1\}$, and   $\sigma = \begin{bmatrix} 0 & (-1)^{p+q}\i  \alpha  \\ \alpha & 0 \end{bmatrix}$ with $\alpha \in \kk^\times$.
\end{enumerate}
The invariant rings for these actions are:

\begin{center}
\begin{tabular}{|c|c|c|l| }
\hline
$\pi$ & $r$ & $T_{(-1)^p,(-1)^q,-1}$ &Invariant ring\\
\hline  
$\pi_1$& $uv-vu$ & $p\equiv q\pmod{2}$ & $\kk[u^2v^2,u^4+v^4,uv(u^4-v^4),t^2]$\\ \hline
$\pi_1$& $uv-vu$ & $p\not\equiv q\pmod{2}$ & $\kk\!\!\left[\begin{array}{ll}u^2v^2,u^4+v^4,uv(u^4-v^4),\\(u^4-v^4)t^2,uvt^2,t^4\end{array}\right]$\\ \hline
 $\pi_1$& $uv+vu$ & $p\equiv q\pmod{2}$ & $\kk[uv,u^4+v^4,t^2]$\\ \hline
$\pi_1$& $uv+vu$ & $p\not\equiv q\pmod{2}$ & $\kk[uv,u^4+v^4,(u^4-v^4)t^2,t^4]$\\ \hline
$\pi_2$ &$u^2-\i v^2$ & $p\equiv q\pmod{2}$ & $\kk[u^2, (uv)^2-(vu)^2, t^2]$\\
\hline
$\pi_2$ & $u^2-\i v^2$ & $p \not\equiv q \pmod{2}$& $\kk[u^2,(uv)^2-(vu)^2, ((uv)^2+(vu)^2)t^2,t^4]$\\
\hline
$\pi_2$ & $u^2+ \i v^2$ & $p \equiv q\pmod{2}$ & $\kk[u^4,(uv)^2-(vu)^2, u^2((uv)^2+(vu)^2),t^2]$\\
\hline
 $\pi_2$& $u^2 + \i v^2$ & $p \not\equiv q \pmod{2}$& $\kk\!\!\left[\begin{array}{ll}u^4, (uv)^2-(vu)^2, u^2((uv)^2+(vu)^2),\\ u^2t^2, ((uv)^2 +(vu)^2)t^2, t^4\end{array}\right] $\\
\hline
\end{tabular}
\end{center}
The invariant rings in the third and fifth rows are Ore extensions of commutative polynomial rings, and hence \kashina{15} is a reflection Hopf algebra for the actions described in those rows.  By Lemma~\ref{notregular}, the other invariant rings are not AS regular.
\end{theorem}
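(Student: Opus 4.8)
The proof has three parts: inner-faithfulness, the computation of the invariant subrings, and the AS-regularity dichotomy. For the first, the actions of $H$ on the algebras $A$ and $B$ are exactly those singled out by the tensor-product calculations carried out immediately before the statement: those computations show that $\pi_1\otimes T_{\beta,\gamma,-1}\cong\pi_2$ and $\pi_2\otimes T_{\beta,\gamma,-1}\cong\pi_1$ (so both two-dimensional simples occur in a tensor power of $A_1$), that $\pi_i^{\otimes 2}$ realizes the four one-dimensional representations with trivial $g$-action, and that tensoring these against $T_{\beta,\gamma,-1}$ produces the remaining four one-dimensional representations; by Theorem~\ref{innerfaithful}, every simple $H$-module appears in some tensor power of $A_1$, so $A$ and $B$ are inner-faithful $H$-modules. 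The same computations also record which quadratic relations $r$ give a well-defined $H$-action for a given $\pi_i$ and which $\sigma$ extend the action to the Ore extension; this is what forces only the combinations $(\pi_1,\,uv\pm vu)$ and $(\pi_2,\,u^2\pm\i v^2)$ to occur, since for the other pairings $\sigma$ is not a graded automorphism of $\kk\langle u,v\rangle/(r)$.

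Next, the invariant subrings. Using Lemma~\ref{lem:K15evenTdeg} to force the $t$-degree of an invariant to be even, together with the inductive formulas for the action of $a$ and $b$ on monomials recorded before the theorem, every invariant of each of the eight actions was already put into one of four explicit normal forms displayed just above, the sign in each form depending on the parity of $p+q$, which is what produces the eight rows of the table. It remains to extract from each normal form the minimal generating set listed. This is done exactly as in the proof of Theorem~\ref{fixedrings89}: one substitutes appropriate commuting elements into the generator-extraction statements of \cite[Subsection 1.4]{FKMW} and the binomial identities of \cite[Lemma 1.10]{FKMW} --- for instance $X=u^4$, $Y=\pm v^4$, $Z=uv$ in the $uv\pm vu$ cases and $X=u^2$, $Y=\pm(uv)^2$, $Z=(vu)^2$, $W=t^2$ in the $u^2\pm\i v^2$ cases --- and then carries out short inductions of the type appearing in \emph{op. cit.} to see that the displayed elements already suffice; minimality is then checked degree by degree, e.g. by comparing Hilbert series, or equivalently by verifying that no listed generator lies in the subalgebra generated by those of strictly smaller degree. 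I expect this to be the main obstacle: it is routine, but the interplay of the four relation choices, the two irreducibles, and the parity of $p+q$ makes for a long case analysis, and the ``it suffices to generate'' inductions must be done with care. I would present one representative case (say $\pi_1$, $r=uv-vu$, $p\not\equiv q$) in full and observe that the remaining cases are entirely analogous.

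Finally, the AS-regularity dichotomy. In the third and fifth rows the invariant ring has the form $\kk[f_1,f_2][t^2;\tau]$, where $\{f_1,f_2\}$ is $\{uv,\,u^4+v^4\}$ or $\{u^2,\,(uv)^2-(vu)^2\}$; in each case one checks that $f_1,f_2$ are algebraically independent and generate a commutative polynomial ring in two variables, and that $\tau=\sigma^2$, which is scalar multiplication by $\i\alpha^2$ on the degree-one part, restricts to a graded automorphism of $\kk[f_1,f_2]$. Hence these invariant rings are graded Ore extensions of commutative polynomial rings, so they are AS regular, and therefore \kashina{15} is a reflection Hopf algebra for the actions in those two rows. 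For every other row, the minimal generating set found above has at least four elements, whereas $\GKdim A^H=\GKdim A=3$ (the algebra $A$ being a graded Ore extension of a two-dimensional quadratic AS regular algebra, and $A$ a finite module over $A^H$); Lemma~\ref{notregular} then shows that these invariant rings are not AS regular, completing the proof.
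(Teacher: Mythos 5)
Your proposal is correct and follows essentially the same route as the paper: the inner-faithfulness is read off from the tensor-product decompositions via Theorem~\ref{innerfaithful}, the invariants are reduced to the displayed normal forms using Lemma~\ref{lem:K15evenTdeg} and the inductive action formulas, the generating sets are extracted by the substitutions into \cite[Subsection 1.4]{FKMW} as in the proof of Theorem~\ref{fixedrings89}, and the regularity dichotomy is settled by the Ore-extension observation together with Lemma~\ref{notregular}. The paper in fact compresses the final step to a citation of \emph{op.\ cit.}, so your plan to write out one representative generator-extraction induction is, if anything, more detailed than what appears there.
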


\kashina{14} (\cite[p. 638]{kashina}) is $\mathcal{A}_{16}$ and \kashina 16 (\cite[pp. 640--641]{kashina})is $\mathcal{B}_{16}$, in the notation of \cite{M} and \cite{FKMW}. See \cite[Section 6]{FKMW} for the details on $\mathcal{A}_{16}$ and \cite[Section 4]{FKMW} for the details on $\mathcal{B}_{16}$.

\begin{theorem}(\cite[Section 6.3, and  Theorem 6.10]{FKMW}) \label{fixedring14}
$H=\cA_{16}$ has no inner-faithful two-dimensional representations, but acts inner-faithfully on five AS regular Ore extensions of dimension 3, and is a reflection Hopf algebra for three of these algebras.
\end{theorem}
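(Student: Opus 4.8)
The plan is to follow the same strategy used above for \twokashinas{8}{9} and for \kashina{15}, specializing it to $H = \cA_{16}$; this recovers the statements of \cite[Section 6.3 and Theorem 6.10]{FKMW}. Since $\bG(H) = C_2 \times C_2$ has order four, the Wedderburn decomposition of $H$ is $\kk^{\oplus 4} \oplus M_2(\kk)^{\oplus 3}$, so $H$ has exactly four one-dimensional and three non-isomorphic two-dimensional irreducible representations. First I would write these down explicitly from the presentation of $\cA_{16}$ given in \cite{M}, together with the coproduct of the non-grouplike algebra generator, and record a ``trick lemma'' in the spirit of Lemmas~\ref{lem:K2K4trick}, \ref{lem:K1K3trick}, and \ref{lem:trickC_4C_2} that computes the action of that generator on a tensor product of two $H$-representations.

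Next, to see that $H$ has no inner-faithful two-dimensional representation, I would check directly that for each irreducible $\pi$ there is a nontrivial grouplike $g \in \bG(H)$ acting on $\pi$ as a scalar; by Lemma~\ref{HopfIdeal} the two-sided ideal generated by $1 - g$ is then a nonzero Hopf ideal annihilating $\pi$ and all of its tensor powers, so by Theorem~\ref{innerfaithful} the action is not inner-faithful. Equivalently, one shows $\pi \otimes T \cong \pi$ for some nontrivial one-dimensional $T$, so not every simple appears in $T(\pi)$. This simultaneously rules out any inner-faithful action on a two-generated quadratic AS regular algebra.

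Then, for three-generated algebras, I would take $A_1 = \pi \oplus T$ with $\pi$ one of the three two-dimensional irreducibles and $T$ one-dimensional, decompose $A_1 \otimes A_1$ into irreducible $H$-modules, and read off which one-dimensional summands $\kk r$ can serve as the span of a single quadratic relation so that $A = \frac{\kk\langle u,v\rangle}{(r)}[t;\sigma]$ is an $H$-module algebra with $\sigma$ a genuine graded automorphism of the quotient $\frac{\kk\langle u,v\rangle}{(r)}$ (ruling out, as in the $u^2 \pm v^2$ versus $uv \pm vu$ dichotomy for \kashina{15}, those $r$ for which $\sigma$ fails to descend). Inner-faithfulness of the resulting action is then checked exactly as in the proof of Theorem~\ref{dim2and3InnerFaithful}, by verifying that $T$ is not a summand of $\pi \otimes \pi$ together with the decomposition data. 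Carrying out this bookkeeping produces exactly five admissible pairs $(A, A_1)$. For each, I would compute $A^H$: using the trick lemma, reduce to the action of the single non-grouplike generator on the subalgebra $R$ of elements fixed by all of $\bG(H)$ --- which, as in Lemma~\ref{lem:restrictToR}, is spanned by monomials of suitably even degrees and is commutative or close to it --- and then apply the generating-set results of \cite[Subsection 1.4]{FKMW} to produce an explicit minimal generating set of $A^H$.

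Finally I would inspect the five invariant rings: three of them turn out to be Ore extensions of commutative polynomial rings in two or three variables, hence AS regular, so $\cA_{16}$ is a reflection Hopf algebra for those three algebras; the remaining two require at least four algebra generators and so fail to be AS regular by Lemma~\ref{notregular}. The main obstacle I anticipate is purely organizational: correctly enumerating the five admissible quadratic relations and then pushing each invariant-ring computation through the reduction to \cite[Subsection 1.4]{FKMW}, including the inductive generation arguments of the type appearing in the proof of Theorem~\ref{fixedrings89}. No single step is conceptually hard, but there is ample room for sign and index errors, so the verification of the Ore extension structure (and of the ``extra'' generators in the non-regular cases) will need to be done carefully.
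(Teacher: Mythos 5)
The paper does not actually prove this statement: Theorem~\ref{fixedring14} is a summary of results established in \cite[Section 6.3 and Theorem 6.10]{FKMW}, where $\cA_{16}$ appears as the $n=2$ member of an infinite family, and the theorem here is included only for completeness. Your plan to reprove it from scratch by the same machinery used elsewhere in this paper (decompose $A_1\otimes A_1$, identify admissible quadratic relations, restrict to the $\bG(H)$-fixed subalgebra, apply the generating-set lemmas of \cite[Subsection 1.4]{FKMW}) is methodologically sound and is essentially what \cite{FKMW} does; so the route is reasonable, but it is not what this paper does, and more importantly it is not yet a proof.

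The genuine gap is that the entire quantitative content of the theorem --- that there are exactly \emph{five} admissible AS regular Ore extensions and that exactly \emph{three} of the five invariant rings are AS regular --- is asserted to ``turn out'' from bookkeeping that you never perform. Those counts \emph{are} the theorem; without writing down the irreducible representations of $\cA_{16}$, enumerating the relations $r$ and automorphisms $\sigma$ that survive, and exhibiting the generators of each $A^H$, nothing has been verified. There is also a concrete error in your first step: you infer the Wedderburn decomposition $\kk^{\oplus 4}\oplus M_2(\kk)^{\oplus 3}$ from $|\bG(H)|=4$, but the number of one-dimensional irreducibles is $|\bG(H^*)|$, not $|\bG(H)|$; \kashina{15} has $\bG(H)\cong C_2\times C_2$ yet eight one-dimensional representations and only two two-dimensional irreducibles. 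The conclusion happens to be correct for $\cA_{16}$, but the argument you give for it is invalid, and since the count of two-dimensional irreducibles feeds directly into the enumeration of the five algebras, this needs to be established from the actual representation theory of $\cA_{16}$ (as in \cite[Section 6]{FKMW}) rather than from the order of $\bG(H)$.
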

\begin{theorem}(\cite[Section 4.2,  Theorems 4.7 and 4.8]{FKMW}) \label{fixedring16}
$H=\cB_{16}$ has eight inner-faithful two-dimensional representations,  and is a reflection Hopf algebra for four of these algebras.
\end{theorem}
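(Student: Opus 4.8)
The statement is established in \cite[Section 4.2, Theorems 4.7 and 4.8]{FKMW}; here I outline the strategy, which parallels the other cases treated above. The first step is to record the Hopf structure of $H = \cB_{16} = \kashina{16}$ from \cite[Section 4]{FKMW} together with explicit matrices of its algebra generators on each irreducible representation. One then identifies the inner-faithful two-dimensional representations: by Theorem~\ref{innerfaithful}, a two-dimensional $\pi$ is inner-faithful if and only if every simple $H$-module occurs as a summand of some tensor power of $\pi$, which (as in the proof of Theorem~\ref{dim2and3InnerFaithful}) comes down to the condition $\pi^\ast \not\cong \pi$ together with the structure of the one-dimensional summands of $\pi\otimes\pi$. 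Running through the irreducible two-dimensional $\pi_i$ and their twists $\pi_i \otimes \varphi$ by one-dimensional representations $\varphi$, one checks that exactly eight of these modules are inner-faithful, giving the first assertion.

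Next, for each of these eight inner-faithful representations $\pi$, I would decompose $\pi \otimes \pi$ into irreducible $H$-modules. The one-dimensional summands determine the quadratic relations $r$ for which $A = \kk\langle u, v\rangle/(r)$ is an $H$-module algebra with $A_1 \cong \pi$; among these the AS regular candidates are $\kk_q[u,v]$ for a root of unity $q$ and the algebras $\kk\langle u,v\rangle/(u^2 - cv^2)$, which are AS regular by Lemma~\ref{lem:u2v2Basis}. One then computes the fixed ring $A^H$ for each such pair $(A,\pi)$. As in Sections~\ref{c2xc2xc2}--\ref{d8}, this is carried out either by tracking the action of the algebra generators of $H$ on a monomial $\kk$-basis of $A$ and solving the resulting system of congruences on exponents, or, if $\cB_{16}$ is realized as a cotwist of a group algebra, by transporting the computation to the group case via Lemma~\ref{lem.cocycle}, which identifies $(A_{\Omega})^{H^{\Omega}}$ with $A^H$. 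Each resulting invariant ring is then put into the form $\kk[\,\text{generators}\,]$ or an iterated graded Ore extension, using the generation lemmas from \cite[Subsection 1.4]{FKMW}.

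The final step is to decide, for each of the eight invariant rings, whether it is AS regular. When $A^H$ is isomorphic to a commutative polynomial ring or to a graded Ore extension of one, it is AS regular, and these are precisely the cases in which $\cB_{16}$ is a reflection Hopf algebra; the count should come out to exactly four. In the remaining cases one exhibits a minimal generating set of size at least four while $\GKdim A^H \le 3$, so $A^H$ is not AS regular by Lemma~\ref{notregular}. The main obstacle is the invariant-ring computation itself — establishing a genuinely minimal generating set for each $A^H$ (in particular, the lower bound on the number of generators needed to apply Lemma~\ref{notregular}) and verifying the Ore-extension or polynomial structure in the four regular cases; the representation-theoretic bookkeeping needed to pin down which eight two-dimensional representations are inner-faithful is routine by comparison.
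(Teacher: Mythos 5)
The paper offers no argument for Theorem~\ref{fixedring16}: it is stated purely as a citation of \cite[Section 4.2, Theorems 4.7 and 4.8]{FKMW}, so your proposal --- which likewise defers to that reference and sketches the method --- matches the paper in substance, and the sketch does follow the template used for the other Kashina algebras (decompose $\pi\otimes\pi$, read off the quadratic relations, compute $A^H$, test AS regularity via Lemma~\ref{notregular}). Two details need correcting, however. First, Theorem~\ref{dim2and3InnerFaithful} is not available here, even ``as in its proof'': that argument requires $|\bG(H^*)|=8$, so that $\pi\otimes\pi$ is a multiplicity-free sum of four one-dimensional representations, whereas $\cB_{16}$ has Wedderburn decomposition $\kk^{\oplus 4}\oplus M_2(\kk)\oplus M_2(\kk)\oplus M_2(\kk)$, hence $|\bG(H^*)|=4$ and three two-dimensional simples; the inner-faithfulness analysis must be carried out directly with Theorem~\ref{innerfaithful} (and Lemma~\ref{HopfIdeal} to rule representations out), as is done in \cite{FKMW}. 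Second, since $\cB_{16}$ has only three irreducible two-dimensional modules, ``eight inner-faithful two-dimensional representations'' cannot be a count of pairwise non-isomorphic modules obtained by twisting by one-dimensional representations; the count of eight refers to the eight two-generated quadratic AS regular algebras $A$ (an inner-faithful $\pi$ together with a choice of quadratic relation $r$ drawn from the one-dimensional summands of $\pi\otimes\pi$) on which $H$ acts, exactly four of which have AS regular invariant rings --- which is why the theorem's own wording switches from ``representations'' to ``algebras.'' With those adjustments your outline is the right reconstruction of the cited proof.
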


\bibliographystyle{amsplain}
\bibliography{biblio}
\end{document}